\documentclass[
  11pt,
  paper=a4,
  parskip=half,
  numbers=noenddot,
  headings=normal
]{scrartcl}
\usepackage{amsthm,amsmath,amsfonts,amssymb}
\usepackage[margin=2.1cm]{geometry}

\usepackage[T1]{fontenc}
\usepackage[utf8]{inputenc}
\usepackage[english]{babel}
\usepackage{newtxtext,newtxmath}
\usepackage{microtype}

\usepackage{mathtools}
\usepackage{bbm}

\usepackage[numbers,sort&compress]{natbib}
\usepackage[
  colorlinks=true,
  citecolor=blue,
  linkcolor=blue,
  urlcolor=blue
]{hyperref}

\usepackage{authblk}

\usepackage{pgf}
\usepackage{pgfplots}
\usepgfplotslibrary{polar,patchplots}
\pgfplotsset{compat=1.18}

\usepackage{tikz}
\usepackage{tikz-cd}
\usepackage{tikz-3dplot}
\definecolor{coarseteal}{RGB}{0,112,108}

\usetikzlibrary{
  arrows,
  arrows.meta,
  bending,
  automata,
  backgrounds,
  calc,
  chains,
  decorations.markings,
  decorations.pathmorphing,
  decorations.pathreplacing,
  fit,
  graphs,
  hobby,
  math,
  matrix,
  patterns,
  positioning,
  quotes,
  shapes,
  trees,
  babel
}

\usepackage{todonotes}

\newcommand{\bbZ}{\mathbb{Z}}

\newcommand{\Prob}{\mathbb{P}}

\newcommand{\E}{\mathbb{E}}

\DeclareMathOperator{\Cay}{Cay}

\newtheorem{theorem}{Theorem}[section]
\newtheorem{proposition}[theorem]{Proposition}
\newtheorem{lemma}[theorem]{Lemma}

\theoremstyle{definition}
\newtheorem{definition}[theorem]{Definition}

\theoremstyle{remark}
\newtheorem{remark}[theorem]{Remark}

\newcommand{\keywords}[1]{%
  \par\medskip
  \noindent\textbf{Keywords: }#1.
}

\newcommand{\subjclass}[2][]{%
  \par\smallskip
  \noindent\textbf{Mathematics Subject Classification (2020): }
  #2%
  \if\relax\detokenize{#1}\relax
  \else
    \space\textit{(secondary: #1)}
  \fi.
}

\title{%
  Extinction, Survival and Fluctuations for the Spatial
  Maki--Thompson Model on Infinite Graphs
}

\author[1]{Luciano Henrique Lacerda de Ara\'ujo}
\author[1]{Daniel Miranda Machado}
\author[1]{Cristian Favio Coletti}
\author[2]{Denis Araujo Luiz}

\affil[1]{%
  Universidade Federal do ABC (UFABC), Brazil\\
  \href{mailto:henrique.luciano@aluno.ufabc.edu.br}
       {\texttt{henrique.luciano@aluno.ufabc.edu.br}}\\
  \href{mailto:daniel.miranda@ufabc.edu.br}
       {\texttt{daniel.miranda@ufabc.edu.br}}\\
  \href{mailto:cristian.coletti@ufabc.edu.br}
       {\texttt{cristian.coletti@ufabc.edu.br}}
}

\affil[2]{%
  Universidade Estadual de Campinas (UNICAMP), Brazil\\
  \href{mailto:denis.luiz@unicamp.br}
       {\texttt{denis.luiz@unicamp.br}}
}

\date{}

\usepackage{scrlayer-scrpage}
\clearpairofpagestyles
\ihead{Spatial Maki--Thompson Model}
\ohead{Ara\'ujo, Machado, Coletti and Luiz}
\setkomafont{pageheadfoot}{\small}

\begin{document}

\maketitle

\begin{abstract}
 We study the spatial Maki--Thompson rumor model on infinite, connected
graphs of bounded degree. Spreaders transmit the rumor to ignorant
neighbors but become stiflers upon contacting non-ignorant neighbors.

We prove extinction on Cayley graphs of linear growth, for every
\(\lambda,\alpha>0\) and every initial configuration with finitely many
non-ignorant vertices, and establish an explicit extinction criterion on
arbitrary bounded-degree graphs for processes started from finitely many
spreaders. On Cayley graphs of superlinear growth, we prove survival from a
single spreader whenever the ratio of the stifling to the transmission rate
lies below an explicit threshold depending only on the maximum degree. On Cayley graphs of polynomial
growth of degree \(D\ge2\), we further show that the range has positive
lower density with positive probability. Under a stronger condition, macroscopic annuli contain a surface-order
number of simultaneously active spreaders for a total duration bounded
uniformly away from zero.  When \(\alpha>0\),
every finite region eventually contains no spreaders, so global survival
forces the rumor to move continually into new regions.

Under either subcriticality or a sufficiently small stifling rate, we
prove central limit theorems for the final stifler density and the total
spreader occupation time, together with a functional central limit theorem
for the empirical survival function. These results follow from a central
limit theorem for stationary stabilizing functionals of i.i.d.\ fields on
polynomial-growth Cayley graphs; the functionals may depend on the field
outside the observation set.

\end{abstract}

\keywords{%
    Maki--Thompson rumor model;
    Bernoulli site percolation;
    interacting particle systems;
    Cayley graphs;
    stabilization;
    central limit theorem
}

\subjclass[20F69; 60F05; 60F17; 82C22]{60K35}

\section{Introduction}

The Maki--Thompson model is a classical stochastic model of rumor propagation \cite{maki1973mathematical}, a phenomenon
with important consequences for social and political communication \cite{vosoughi2018spread}. Unlike in standard
epidemic models, spreaders do not become inactive at an intrinsic recovery rate. Instead, a spreader becomes a stifler
only after contacting an individual who already knows the rumor. The original model assumes homogeneous mixing and may
therefore be viewed as a process on a complete graph, where its macroscopic behavior can be described by deterministic
differential equations. Beyond its original motivation, the model provides a natural example of an irreversible
interacting particle system in which transmission and suppression arise from the same local interactions.

Under homogeneous mixing, the Maki--Thompson model and its Daley--Kendall predecessor are now well understood: the
final proportion of ignorants converges to an explicitly characterized constant and satisfies a central limit theorem,
and similar results are available for several extensions of the stifling mechanism
\cite{daley1965stochastic,sudbury1985,watson1988,lebensztayn2011limit}.

Real-world interactions, however, are typically structured and local. Replacing the complete graph by a spatial graph
substantially changes the dynamics: non-ignorant vertices mark the path taken by the rumor, but may also act as local
barriers to its further propagation. We study an irreversible spatial version of the Maki--Thompson model on infinite,
connected graphs of bounded degree, with particular emphasis on Cayley graphs. Each vertex \(x\) is in one of three
states, \(\eta(x)\in\{0,1,2\}\), corresponding, respectively, to an ignorant individual, a spreader, or a stifler. An
ignorant vertex becomes a spreader at rate \(\lambda>0\) times its number of neighboring spreaders, while a spreader
becomes a stifler at rate \(\alpha\ge0\) times its number of non-ignorant neighbors.

A basic difficulty is that the process is not attractive. Increasing the set of spreaders may create additional
opportunities for transmission, but may also increase the local stifling rates. Consequently, the usual monotone
couplings available for the contact process and other attractive interacting particle systems
\cite{liggett1985,liggett1999} do not apply directly. Our analysis instead exploits the irreversible nature of the
dynamics together with the geometry of the underlying graph.

Spatial versions of rumor models have been studied only more recently. Coletti, Rodr\'iguez, and Schinazi
\cite{coletti2011spatial} analyzed a related process on \(\mathbb Z^d\) in which non-ignorant individuals may forget
the rumor and return to the ignorant state. They obtained sufficient conditions for extinction and survival through
comparisons with oriented percolation and the contact process. On infinite Cayley trees, Junior, Rodr\'iguez, and
Speroto \cite{junior2020trees} established extinction and survival results for the classical irreversible model by
comparison with a branching process, while Agliari et al.\ \cite{agliari2017} identified a localization--propagation
transition on small-world networks.

Our first group of results describes how the geometry of the underlying graph affects extinction and survival. On
Cayley graphs of linear growth, the rumor dies out globally almost surely from every finite non-ignorant set, for all
\(\lambda,\alpha>0\); the proof relies on uniformly bounded cutsets. On arbitrary bounded-degree graphs,
branching-process domination yields an explicit extinction criterion in terms of the mean of a dominating offspring
distribution. By contrast, on every Cayley graph of superlinear growth, we construct an i.i.d.\ field of good vertices
and compare it with supercritical Bernoulli site percolation. This proves survival from a single spreader whenever
\(\alpha/\lambda<c_0/(\Delta(1+\log\Delta))\), for a universal constant \(c_0>0\). Thus the relevant geometric
distinction is between linear and superlinear growth: the former has no survival phase, whereas the latter does when
$\alpha/\lambda$ is sufficiently small.

The percolation comparison yields more than survival. On Cayley graphs of polynomial growth of degree \(D\ge2\), the
set of vertices ever reached by the rumor has positive lower density with positive probability. Under a stronger
high-density condition, we also prove that, for every \(\varepsilon\in(0,1)\), there exists \(\delta_\varepsilon>0\)
such that the annuli \(B(o,n)\setminus B(o,(1-\varepsilon)n)\) contain at least \(\delta_\varepsilon n^{D-1}\)
simultaneously active spreaders on a set of times whose Lebesgue measure is bounded below uniformly in \(n\).

When \(\alpha>0\), global survival does not imply persistence in any fixed region. We prove that, from an arbitrary
deterministic initial configuration on a bounded-degree graph, every finite set almost surely contains no spreaders
after some finite random time. Thus, whenever the rumor survives globally, it can do so only by continually moving into
new regions of the graph.

The second part of the paper concerns spatial fluctuations under i.i.d.\ Bernoulli initial states. We study
fluctuations at scale \(\lvert B_n\rvert^{1/2}\) for observables that may depend on the entire trajectory and are
therefore nonlocal functions of the graphical field.

We first establish a central limit theorem for stationary stabilizing functionals of i.i.d.\ fields indexed by
polynomial-growth Cayley graphs, along exhaustive two-sided F{\o}lner sequences. The argument combines a
martingale-difference decomposition \cite{mcleish1974dependent} with stabilization methods
\cite{penrose2001,de2026topology}. The theorem allows the functional to depend on the field outside the observation
set, provided that it admits local conditional approximations whose error is negligible on the variance scale.

To apply this framework, we analyze the discrepancy caused by resampling a single coordinate of the graphical field.
Under either subcriticality or a sufficiently small stifling rate, we obtain sufficiently strong spatial and temporal
localization estimates for this discrepancy. These estimates yield central limit theorems in growing balls for the
final stifler density and the total spreader occupation time. We also prove a functional central limit theorem for the
empirical survival function: after centering and normalization, it converges in Skorokhod space to a Gaussian process
with continuous sample paths. The limiting covariance is expressed in terms of stabilized resampling differences, and
the corresponding variances are shown to be strictly positive.

\textbf{Organization of the paper.} Section~\ref{sec:model-main-results}
defines the model and states the main results. Section~\ref{sec:geometric-probabilistic-tools}
develops the geometric and probabilistic tools: Subsection~\ref{subsec:abstract-clt}
contains the abstract central limit theorem, while Subsection~\ref{subsec:geometric-lemma}
characterizes linear growth through uniformly bounded cutsets. Section~\ref{sec:proofs-extinction-survival}
contains the proofs of the extinction, survival, active-propagation, and local-extinction
results. Finally, Section~\ref{sec:application-mt-observables} proves the central
and functional limit theorems for the Maki--Thompson observables.

\section{Model and Main Results}
\label{sec:model-main-results}

\subsection{The Spatial Maki--Thompson Model}

Throughout the paper, $G=(V,E)$ denotes an infinite, connected graph of bounded degree. We write $x\sim y$ whenever
$\{x,y\}\in E$, and denote the graph distance by $d_{G}(\cdot,\cdot)$. For $x\in V$ and $r\ge0$, let
\[
    B_{G}(x,r):=\{y\in V:d_{G}(x,y)\le r\}.
\]

A graph \(G\) is called \emph{transitive} if, for every \(x,y\in V\), there exists \(\varphi\in\operatorname{Aut}(G)\)
such that \(\varphi(x)=y\). We say that $G$ has \emph{polynomial growth} if there exist constants $C>0$ and $q\ge1$
such that
\[
    |B_{G}(x,r)|\le Cr^{q}\qquad\text{for every $x\in V$ and $r\ge1$}.
\]

Let $\Gamma$ be a finitely generated group and let $S\subset\Gamma\setminus\{ e\}$ be a finite symmetric generating
set. The \emph{Cayley graph} $G=\operatorname{Cay} (\Gamma, S)$ has vertex set $\Gamma$ and edge set
\[
    E = \bigl\{ \{g,gs\}:g\in\Gamma,\ s\in S \bigr\}.
\]
If \(G\) has polynomial growth, then, by Gromov's theorem, \(\Gamma\) is virtually nilpotent, meaning that it
  contains a nilpotent subgroup of finite index. The Bass--Guivarc'h formula \cite{Bass1972,Guivarch1973} then implies
  that there exist an integer \(D\ge 1\) and constants \(0<c\le C<\infty\) such that, writing \(o\) for the identity
  element of \(\Gamma\),
\[
    c\,r^{D}\le |B_G(o,r)|\le C\,r^{D}, \qquad r\ge 1.
\]
We call $D$ the \emph{degree of polynomial growth} of $G$. The case $D=1$ is referred to as linear growth.

Typical examples include the integer lattices $\mathbb{Z}^{d}$, finite extensions such as $\mathbb{Z}^{d}\times F$,
with $F$ finite, and the discrete Heisenberg group $H_{3}(\mathbb{Z})$.

For fixed $\lambda>0$ and $\alpha\ge0$, the process is defined through a Harris graphical representation. For each
ordered edge $(x,y)$ with $x\sim y$, let $N_{\mathrm{inf}}^{x,y}$ and $N_{\mathrm{stif}}^{x,y}$ be independent Poisson
processes with rates $\lambda$ and $\alpha$, respectively, and assume that all these processes are mutually
independent.

At each arrival time of $N_{\mathrm{inf}}^{x,y}$, if $\eta(x)=1$ and $\eta(y) =0$, then $y$ becomes a spreader.
Otherwise, the configuration is unchanged. At each arrival time of $N_{\mathrm{stif}}^{x,y}$, if $\eta(x)=1$ and
$\eta(y)\in\{1,2\}$, then $x$ becomes a stifler. Otherwise, the configuration is unchanged.

Equivalently, for $x\in V$, define
\[
    n_{1}(x,\eta) := \#\{y\sim x:\eta(y)=1\}, \qquad n_{2}(x,\eta) := \#\{y\sim
    x :\eta(y)=2\}.
\]
The local transition rates are
\[
    0\to1 \quad\text{at rate}\quad \lambda n_{1}(x,\eta),
\]
and
\[
    1\to2 \quad\text{at rate}\quad \alpha\bigl(n_{1}(x,\eta)+n_{2}(x,\eta)\bigr
    ).
\]
State $2$ is absorbing, i.e., once a vertex enters state \(2\), it remains a stifler forever.

Since the interaction rates are uniformly bounded on graphs of bounded degree, the process is well defined by the
standard Harris graphical construction; see, for instance, \cite{liggett1985,liggett1999}.

\subsection{Extinction and Survival Regimes}

Let \(S_t:=\{x\in V:\eta_t(x)=1\}\) be the set of spreaders at time \(t\), and define
\[
    \tau:=\inf\{t\ge0:S_t=\varnothing\}.
\]
We call \(\{\tau=\infty\}\) the event of \emph{global survival} and \(\{\tau<\infty\}\) the event of \emph{global
  extinction}. We say that \emph{local extinction} occurs if, for every finite \(K\subset V\), there exists a finite,
  possibly random, time \(T_K\) such that
\[
    S_t\cap K=\varnothing
    \qquad\text{for all }t\ge T_K.
\]

We also write \( \mathcal A_\infty := \{x\in V:\eta_t(x)=1\text{ for some }t\ge0\} \) for the range of the rumor.

We first consider processes whose initial non-ignorant set is finite. On Cayley graphs of linear growth, uniformly
bounded cutsets prevent the rumor from propagating indefinitely.

\begin{theorem}[Extinction on Cayley graphs of linear growth]
    \label{thm:linear-growth-extinction} Let $G=\operatorname{Cay}(\Gamma,S)$
    be the Cayley graph of an infinite finitely generated group of linear growth.
    Consider the spatial Maki--Thompson rumor model on $G$ with transmission
    rate $\lambda>0$ and stifling rate $\alpha>0$, started from an initial
    configuration with only finitely many non-ignorant vertices. Then the rumor
    dies out globally almost surely.
\end{theorem}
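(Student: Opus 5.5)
The plan is to use the characterization of linear growth by uniformly bounded cutsets (Subsection~\ref{subsec:geometric-lemma}). Concretely, there are \(M<\infty\) and an increasing sequence of finite sets \(K_1\subset K_2\subset\cdots\) exhausting \(V\) such that each edge boundary \(\partial K_j\) has at most \(M\) edges. (Equivalently, \(G\) is quasi-isometric to \(\mathbb Z\), and one can take preimages of intervals.) Since the initial non-ignorant set is finite, we may pass to a subsequence so that the initial non-ignorant set lies in \(K_1\). We may also choose the \(K_j\) so that consecutive boundaries are disjoint and separated by distance at least \(2\). Then the events we construct at different cutsets depend on disjoint families of Poisson clocks in the Harris construction.

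The core step is a uniform escape bound. For each \(j\), let \(E_j\) be the event that the rumor ever crosses \(\partial K_j\), i.e.\ some vertex outside \(K_j\) becomes a spreader. I will show that there is \(p=p(M,\Delta,\lambda,\alpha)>0\) such that
\[
\Prob\bigl(E_{j+1}^c \,\big|\, \mathcal F_{\sigma_j}\bigr)\ge p \quad\text{on } E_j,
\]
where \(\sigma_j\) is the first time a spreader appears adjacent to \(\partial K_j\) from the inside, or the first crossing time. The mechanism is a blocking event on the at most \(M\) boundary edges. When the rumor first reaches the inner endpoint \(x\) of a boundary edge \(\{x,y\}\), \(x\) is a spreader. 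It was infected by some neighbour \(z\), which is non-ignorant from then on. So if \(N_{\mathrm{stif}}^{x,z}\) rings before any infection clock \(N_{\mathrm{inf}}^{x,w}\), \(w\sim x\), then \(x\) becomes a stifler without infecting anything.

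Here \(\alpha>0\) is essential. Comparing a rate-\(\alpha\) clock with at most \(\Delta\) rate-\(\lambda\) clocks, by the strong Markov property this happens with conditional probability at least \(\alpha/(\alpha+\Delta\lambda)\), whatever the configuration. To avoid an adaptive ``first time'' argument at each boundary vertex, I would instead work with the finitely many inner boundary vertices of \(\partial K_{j+1}\), of which there are at most \(M\). For each such \(x\), I ask that the first ring among the clocks \(N_{\mathrm{inf}}^{x,y}\) with \(y\notin K_{j+1}\) comes after \(x\) has been stifled. The cleanest version treats each boundary spreader sequentially: at the moment \(x\) first becomes a spreader, apply the strong Markov property. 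The competing events are then the stifle clock towards its infector against the at most \(\Delta\) outgoing infection clocks of \(x\). This event prevents \(x\) from ever infecting across the cut, and it involves only clocks at \(x\). With at most \(M\) such vertices, the probability that no crossing occurs through \(\partial K_{j+1}\) is at least \(p:=(\alpha/(\alpha+\Delta\lambda))^{M}\), uniformly in everything that happened earlier.

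Then \(\Prob(E_{j+1}\mid E_j)\le 1-p\), and iterating gives \(\Prob(\text{rumor reaches outside every } K_j)\le (1-p)^{j}\to0\). So almost surely the range \(\mathcal A_\infty\) is contained in some finite \(K_j\). On that event the process is effectively a continuous-time Markov chain on the finite set \(K_j\). Each spreader has a non-ignorant neighbour (its infector), or is an initial spreader. Each transition moves a vertex monotonically \(0\to1\to2\), so there are at most \(2|K_j|\) transitions. Every spreader with a non-ignorant neighbour stifles at positive rate. An initial spreader with no non-ignorant neighbour must infect a neighbour first, after which it has one, so all total jump rates are positive until no spreaders remain. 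Hence \(\tau<\infty\) almost surely.

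I expect the main obstacle to be justifying the conditional bound rigorously, given that the process is non-attractive. The blocking events at different boundary vertices are defined at random stopping times, and I must check that conditioning on the past does not spoil the lower bound. The plan is to apply the strong Markov property successively at the at most \(M\) stopping times when boundary vertices of \(K_{j+1}\) first become spreaders, multiplying conditional probabilities.
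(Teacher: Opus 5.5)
Your proposal is correct and follows essentially the same route as the paper: uniformly bounded disjoint cutsets from the linear-growth lemma, and a sequential strong-Markov blocking test at the at most $K$ inner-boundary spreaders. In each test the spreader is stifled by its permanently non-ignorant parent before transmitting, with probability at least $\alpha/(\alpha+\Delta\lambda)$; this is followed by geometric iteration over the nested cutsets and extinction on the resulting finite range. The only minor difference is the last step, where you use a direct finite-number-of-transitions argument and the paper instead invokes its local-extinction proposition.
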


On general bounded-degree graphs, the transmission genealogy can be dominated by a branching process. This yields the
following subcritical regime. For $\Delta\ge2$ and $\rho>0$, set
\[
    m_{\Delta}(\rho) := \sum_{k=1}^{\Delta-1}\prod_{j=0}^{k-1}\frac{\Delta-1-j}{\Delta-1-j+(j+1)\rho}
    .
\]
As shown in the proof below, $m_{\Delta}(\rho)$ is the mean of the offspring distribution used in the
  branching-process comparison.

\begin{theorem}[Subcritical regime on bounded-degree graphs]
    \label{thm:MT-positive-subcritical} Let $G=(V,E)$ be an infinite,
    connected graph with
    \(
    \Delta:=\sup_{x\in V}\deg(x)<\infty.
    \)
    Consider the spatial Maki--Thompson rumor model on $G$, with
    transmission rate $\lambda>0$ and stifling rate $\alpha>0$, started from
    a finite set $S_{0}\subset V$ of spreaders, with all other vertices
    initially ignorant. If
    \(
    m_{\Delta}(\alpha/\lambda)<1,
    \)
    then global extinction occurs almost surely.
\end{theorem}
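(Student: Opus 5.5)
The plan is to dominate the genealogy of the rumor by a subcritical branching structure, controlled through first moments. Say that $x$ is a \emph{child} of $y$ if $y$ converts $x$ from $0$ to $1$ through an arrival of $N_{\mathrm{inf}}^{y,x}$. This gives a forest rooted at $S_0$. Generation $0$ is $S_0$, and generation $n$ consists of the children of generation-$(n-1)$ vertices. Every vertex ever in state $1$ lies in this forest, since initially there are no stiflers and the only other way to become a spreader is through an infection event. The goal is to show $\E[\#\mathcal A_\infty]<\infty$, and then deduce $\tau<\infty$ a.s.

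\textbf{Key step: a one-vertex race bound.} Fix a non-root spreader $x$. Its parent is non-ignorant forever, so $x$ has at most $\Delta-1$ ignorant neighbours at its birth time. After $x$ has produced $j$ children, it has at most $\Delta-1-j$ ignorant neighbours, which can only decrease further through infections by other vertices. It also has at least $j+1$ non-ignorant neighbours, namely its parent and its $j$ children. While $x$ is a spreader, its own infection hazard is therefore $a(t)=\lambda I_t$ with $I_t\le\Delta-1-j$. Its stifling hazard is $b(t)=\alpha N_t$ with $N_t\ge j+1$. Hence $a/(a+b)\le p_j:=\frac{\Delta-1-j}{\Delta-1-j+(j+1)\rho}$ with $\rho=\alpha/\lambda$, and $a\equiv0$ once $I_t=0$.

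For adapted competing hazards with $a\le p_j(a+b)$, the probability that the $a$-event comes first is
\[
\int_0^\infty a(t)e^{-\int_0^t(a+b)}\,dt\le p_j\int_0^\infty (a+b)e^{-\int_0^t(a+b)}\,dt\le p_j .
\]
This holds conditionally on the past, since everything is defined through the Harris construction and the filtration of the Poisson clocks. By the strong Markov property applied at successive infection times of $x$, this gives $\Prob(x\text{ has}\ge k\text{ children}\mid\mathcal F_{\sigma_x})\le\prod_{j=0}^{k-1}p_j$, where $\sigma_x$ is the birth time of $x$. Summing over $k$ yields $\E[\#\text{children of }x\mid\mathcal F_{\sigma_x}]\le m_\Delta(\rho)$. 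For a root, the same argument gives at most $\Delta$ children, and in fact any crude bound suffices.

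\textbf{Summation and extinction.} Let $Z_n$ be the size of generation $n$. Conditioning each generation-$n$ vertex on its birth $\sigma$-field and using the tower property gives $\E[Z_{n+1}]\le m_\Delta(\rho)\E[Z_n]$ for $n\ge1$, and $\E Z_1\le\Delta|S_0|$. Since $m_\Delta(\rho)<1$, we get $\E\sum_n Z_n<\infty$, so $\mathcal A_\infty$ is finite almost surely. Each vertex spends only a finite time in state $1$.
\begin{itemize}
\item A non-root vertex always has its non-ignorant parent as a neighbour, so its stifling hazard is at least $\alpha>0$ and it is stifled after an a.s. finite time.
\item A root that never acquires a non-ignorant neighbour has at least one ignorant neighbour, hence infection hazard at least $\lambda$. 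This would produce a child a.s., which is a contradiction, so from that moment on its stifling hazard is at least $\alpha$.
\end{itemize}
With finitely many spreaders in total, each active for a finite time, we conclude $\tau<\infty$ almost surely.

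The main obstacle is making the race bound rigorous despite the lack of attractiveness. Neighbour states change due to other vertices, so one cannot simply compare with a fixed Galton--Watson tree with independent offspring. I expect the conditional hazard-ratio inequality combined with first-moment control by the tower property to bypass this, because it never requires independence across individuals.
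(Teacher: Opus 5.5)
Your proof is correct. Its core estimate matches the paper's. After $j$ children, a non-initial spreader has at most $\Delta-1-j$ ignorant and at least $j+1$ non-ignorant neighbours, so the conditional probability of a further child is at most $p_j$, whence $\mathbb{E}[C_x\mid\mathcal F_{\sigma_x}]\le m_\Delta(\rho)$.

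The difference lies in how this bound is aggregated.
\begin{itemize}
\item The paper upgrades the per-vertex bound to a stochastic domination $C_v\preceq Z_\Delta$, with $C_{v_0}\preceq 1+Z_\Delta$ at the root. It then couples the whole genealogy with a Galton--Watson forest by deferred revelation.
\item You use only first moments, $\mathbb{E}Z_{n+1}\le m_\Delta(\rho)\,\mathbb{E}Z_n$. This comes from the tower property at the birth times $\sigma_x$, since the event that $x$ lies in generation $n$ is $\mathcal F_{\sigma_x}$-measurable.
\end{itemize}

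Your route is more elementary. It sidesteps the one delicate point of a coupling argument in this non-attractive system: producing offspring variables that are independent across individuals, even though neighbouring families compete for the same ignorant vertices. It also gives the explicit bound $\mathbb{E}|\mathcal A_\infty|\le |S_0|\bigl(1+\Delta/(1-m_\Delta(\rho))\bigr)$.

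The paper's coupling buys distributional information on each family, in particular an exponential tail for the total progeny. That tail is reused in Step~2 of the proof of Proposition~\ref{prop:verification-mt-observables}. First-moment bounds by themselves give an exponential tail for the number of generations, via $\mathbb{P}(Z_n\ge1)\le\mathbb{E}Z_n$, but not directly for the total population.

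One small technical point concerns your race bound. With random, time-varying hazards, the expression $\int_0^\infty a(t)e^{-\int_0^t(a+b)}\,dt$ is literally the deterministic-hazard formula. Let $T$ be the first effective mark of $x$ after $\sigma_j$. The clean statement is the compensator identity
$\mathbb{P}(\text{child first}\mid\mathcal F_{\sigma_j})=\mathbb{E}\bigl[\int_{\sigma_j}^{T}a\,dt\mid\mathcal F_{\sigma_j}\bigr]\le p_j\,\mathbb{E}\bigl[\int_{\sigma_j}^{T}(a+b)\,dt\mid\mathcal F_{\sigma_j}\bigr]\le p_j$.
The paper avoids this by freezing the sets $A_j,B_j$ at $\sigma_j$ and comparing two independent exponential clocks.
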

On Cayley graphs of superlinear growth, a survival phase appears. The key
input is the universal gap below one for the critical probability of
Bernoulli site percolation on this class of graphs
\cite{PanagiotisSevero}.

For \(q\in[0,1]\), let \(\mathbb P_q\) denote Bernoulli site percolation on \(G\) with parameter \(q\), and set
\[
    \theta_G(q)
    :=
    \mathbb P_q(o\text{ belongs to an infinite open cluster}).
\]
For \(\Delta\ge2\) and \(\rho\ge0\), set
\[
    q_\Delta(\rho)
    :=
    \prod_{j=1}^{\Delta}\frac{j}{j+\Delta\rho}.
\]

\begin{theorem}[Survival with positive-density range]
    \label{thm:MT-survival} Let \(G=(V,E)\) be a Cayley graph of superlinear growth and degree
    \(\Delta\), rooted at \(o\). For \(\lambda,\alpha>0\), consider the
    process started from a single spreader at \(o\), and put
    \(\rho:=\alpha/\lambda\). If
    \[
        q_\Delta(\rho)>p_c^{\mathrm{site}}(G),
    \]
    then \( \mathbb P(\text{global survival}) \ge \theta_G\bigl(q_\Delta(\rho)\bigr). \) If, in addition, \(G\) has
    polynomial growth of degree \(D\ge2\), then
    \begin{equation}
        \mathbb P\left(
        \liminf_{n\to\infty}
        \frac{
                |\mathcal A_\infty\cap B_G(o,n)|
            }{
                |B_G(o,n)|
            }
        \ge
        \theta_G\bigl(q_\Delta(\rho)\bigr)
        \right)
        \ge
        \theta_G\bigl(q_\Delta(\rho)\bigr).
        \label{eq:main-positive-density}
    \end{equation}
    Moreover, there exists a universal constant \(c_0>0\) such that
    \begin{equation}
        \frac{\alpha}{\lambda}
        <
        \frac{c_0}{\Delta(1+\log\Delta)}
        \label{eq:main-uniform-survival-condition}
    \end{equation}
    implies \(q_\Delta(\rho)>p_c^{\mathrm{site}}(G)\), and hence the
    conclusions above hold.
\end{theorem}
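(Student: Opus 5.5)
We construct an i.i.d.\ field of ``good'' vertices from the Harris graphical representation, with each vertex good with probability exactly \(q_\Delta(\rho)\). We then show that every vertex in the open cluster of \(o\) in this site percolation is eventually reached by the rumor. This gives \(\mathbb P(\text{global survival})\ge\mathbb P(|\mathcal C(o)|=\infty)=\theta_G(q_\Delta(\rho))\), because reaching infinitely many vertices forces \(\tau=\infty\): each spreader needs a positive time to transmit, and only finitely many transmissions can occur in finite time with positive probability of infinitely many arrivals excluded almost surely.

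\textbf{Defining the good event.} For each vertex \(x\), look at the \(\Delta\) outgoing infection clocks \(N^{x,y}_{\mathrm{inf}}\), \(y\sim x\), and the \(\Delta\) stifling clocks \(N^{x,y}_{\mathrm{stif}}\), all measured from the moment \(x\) becomes a spreader. Since the Poisson processes are memoryless, it is convenient to use an equivalent graphical construction in which each vertex carries its own fresh clocks, started when it is infected; these are i.i.d.\ across vertices. Call \(x\) good if its first \(\Delta\) arrivals among the pooled infection clocks \(\{N^{x,y}_{\mathrm{inf}}\}_{y\sim x}\) reach all \(\Delta\) neighbours before any stifling arrival on any of its \(\Delta\) edges. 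The worst case is that every neighbour is already non-ignorant, so stifling runs at total rate at most \(\Delta\alpha\).

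We then compute the probability of this event by a racing argument. When \(j\) neighbours remain to be infected, the distinct-neighbour infection rate is \(j\lambda\), competing against a stifling rate of at most \(\Delta\alpha\). The probability that infection wins at every stage is
\[
\prod_{j=1}^{\Delta}\frac{j\lambda}{j\lambda+\Delta\alpha}=q_\Delta(\rho).
\]
To obtain exact independence with this exact probability, we dominate the actual stifling attempts by a rate-\(\Delta\alpha\) clock, so that goodness is defined purely from i.i.d.\ vertex-local randomness. Monotonicity then gives: if \(x\) is a spreader and good, every ignorant neighbour of \(x\) becomes non-ignorant. A neighbour that was already non-ignorant was itself reached, so no conflict arises.

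\textbf{Cluster inclusion.} By induction along paths in the open cluster of \(o\) (with \(o\) good), every good vertex adjacent to a reached good vertex is itself reached. It then spreads to all its neighbours, including bad ones, since those bad vertices are still reached. Hence \(\mathcal A_\infty\) contains the cluster \(\mathcal C(o)\) together with its outer boundary.

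\textbf{Positive density.} For \eqref{eq:main-positive-density} on polynomial growth with \(D\ge2\), we need the infinite cluster to have density \(\theta\) in balls on the event \(\{o\leftrightarrow\infty\}\). This follows from ergodicity of the i.i.d.\ field together with a ball-averaged ergodic theorem for amenable groups along balls, which is valid for polynomial growth because balls are F{\o}lner and doubling (Tempelman). Uniqueness of the infinite cluster (Burton--Keane, amenable case) then lets us identify \(\mathcal C(o)\) with the unique infinite cluster on that event.

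\textbf{Explicit threshold.} For \eqref{eq:main-uniform-survival-condition}, we use Panagiotis--Severo: \(p_c^{\mathrm{site}}(G)\le1-\varepsilon_0\) for a universal \(\varepsilon_0\). Then we bound
\[
-\log q_\Delta(\rho)=\sum_{j=1}^{\Delta}\log\Bigl(1+\frac{\Delta\rho}{j}\Bigr)\le\Delta\rho\,H_\Delta\le\Delta\rho(1+\log\Delta),
\]
so \(q_\Delta(\rho)\ge e^{-c_0}>1-\varepsilon_0\) once \(c_0\) is small enough.

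\textbf{Main obstacle.} The delicate step is the i.i.d.\ construction. Infection times are random, stifling rates depend on the evolving configuration, and the non-attractiveness of the model prevents naive coupling. I would handle this with per-vertex clocks started at infection, via the strong Markov property (or a per-vertex exponential-sequence representation), checking carefully that the dominating rate \(\Delta\alpha\) bounds the true stifling rate at all times.
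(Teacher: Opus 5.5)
Your proposal is correct and follows the paper's proof essentially step by step. It uses the same source-age good-vertex event, the same induction along open paths, nonexplosion to turn an infinite range into global survival, and uniqueness of the infinite cluster plus a pointwise ergodic theorem along balls for the density; you invoke Tempelman where the paper invokes Tessera. The threshold step is likewise the paper's: Panagiotis--Severo together with $\sum_{j=1}^{\Delta}1/j\le 1+\log\Delta$. Your sequential racing computation of $q_\Delta(\rho)$ is an equivalent shortcut for the paper's R\'enyi-representation and Laplace-transform calculation.

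Two phrasings should be tightened. First, goodness should read ``every outgoing infection clock of $x$ rings before any outgoing stifling clock of $x$.'' Requiring the first $\Delta$ pooled arrivals to hit distinct neighbours is a strictly smaller event, whose probability is not $q_\Delta(\rho)$. Second, nonexplosion does not follow from each transmission taking positive time. It follows because the number of activated sources is dominated by a Yule process with per-particle rate $\lambda\Delta$.
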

The dependence on the rates only through the ratio
\(\rho=\alpha/\lambda\) follows from invariance under a common rescaling
of time. The uniformity over the class of graphs instead follows from
the percolation estimate of Panagiotis and Severo.

The previous result concerns the range of the rumor, rather than the number of spreaders present at a given time.
Heuristically, vertices well inside the reached region have already become stiflers, while active spreaders are
concentrated near the advancing boundary. Since a boundary at distance \(n\) has surface-order scale \(n^{D-1}\), the
next theorem makes this picture quantitative; see Figure~\ref{fig:sim1}.

\begin{figure}
    \centering
    \includegraphics[width=0.45\linewidth]{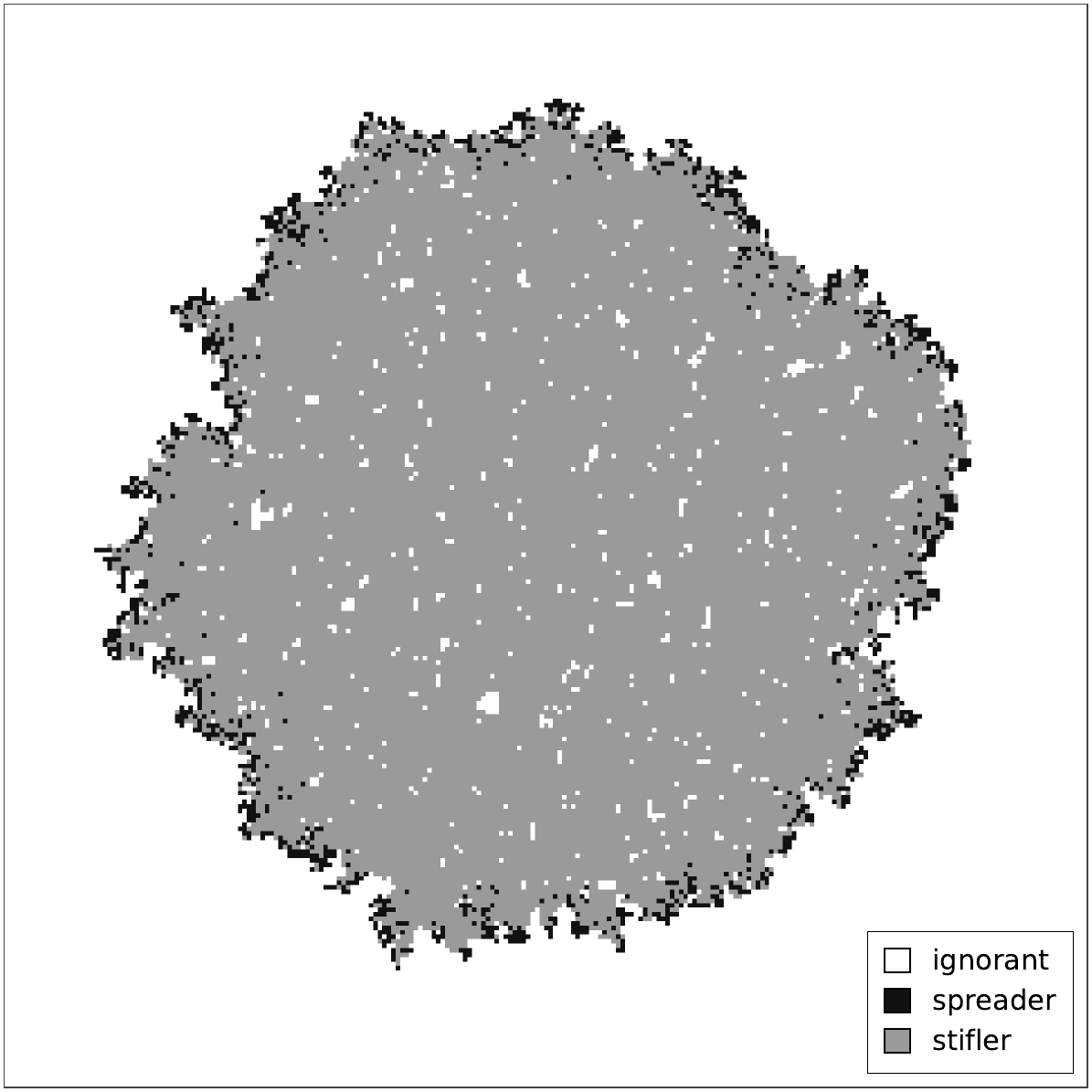}
    \caption{A sample realization of the spatial Maki--Thompson model on $\mathbb{Z}
            ^{2}$, started from a single spreader at the origin with $\lambda=1$ and
        $\alpha=0.25$. Spreaders are concentrated near the boundary of the reached
        region, while the interior is predominantly occupied by stiflers.}
    \label{fig:sim1}
\end{figure}

For a Cayley graph $G=\operatorname{Cay}(\Gamma,S)$ of polynomial growth of degree $D\ge2$, put $\Delta:=|S|$ and fix
$\Lambda=\Lambda(G)\ge1$ such that $G$ is $\Lambda$-simply connected; see
Proposition~\ref{prop:G-is-coarsely-simply-connected}. Set $N_{\Lambda}:=|B_{G}(o,\Lambda)|$ and
\begin{equation}
    \widetilde p_{G}:= \max\left\{ p_{c}^{\mathrm{site}}(G),\, 1-\frac{1}{N_{\Lambda}-1}
    \right\}. \label{eq:main-active-percolation-threshold}
\end{equation}
By transitivity, $N_{\Lambda}=\sup_{z\in V}|B_{G}(z,\Lambda)|$, and
$\widetilde p_{G}<1$.

Since $q_{\Delta}$ is continuous and strictly decreasing from $1$ to $0$, there exists a unique
$\rho_{\mathrm{act}}(G)>0$ such that $q_{\Delta}(\rho_{\mathrm{act}} (G))=\widetilde p_{G}$. For $\lambda>0$, define
$\alpha_{\mathrm{act}}(G,\lambda ) :=\lambda\rho_{\mathrm{act}}(G)$. Thus
\begin{equation}
    0\le\alpha<\alpha_{\mathrm{act}}(G,\lambda) \quad\Longleftrightarrow\quad
    q_{\Delta}(\alpha/\lambda)>\widetilde p_{G}. \label{eq:active-propagation-high-density}
\end{equation}
The value of $\alpha_{\mathrm{act}}(G,\lambda)$ depends on the fixed
admissible choice of $\Lambda$; this dependence is suppressed from the notation.

For $\varepsilon\in(0,1)$ and $n\ge1$, write
\[
    \mathcal{Q}_{n}^{\varepsilon}:= B_{G}(o,n)\setminus B_{G}\!\left(o,\lfloor
        (1-\varepsilon)n\rfloor\right).
\]

\begin{theorem}[Surface-order active propagation]
    \label{thm:surface-order-active-propagation}
    Let \(G=\operatorname{Cay}(\Gamma,S)\) have polynomial growth of degree
    \(D\ge2\), and suppose that the rumor starts from a single spreader at
    \(o\). Let \(\lambda>0\) and
    \(0\le\alpha<\alpha_{\mathrm{act}}(G,\lambda)\).
    Then there exists \(v_{+}<\infty\) such that, for every
    \(\varepsilon\in(0,1)\), there exist constants
    \(0<v_{-}(\varepsilon)<v_{+}\),
    \(\delta_{\mathrm{act}}(\varepsilon)>0\), and
    \(\ell_{\mathrm{act}}(\varepsilon)>0\)
    such that, with positive probability,
    \begin{equation}
        \operatorname{Leb}\left\{
        t\in[v_{-}(\varepsilon)n,v_{+}n]:
        \lvert S_t\cap\mathcal{Q}_n^\varepsilon\rvert
        \ge \delta_{\mathrm{act}}(\varepsilon)n^{D-1}
        \right\}
        \ge \ell_{\mathrm{act}}(\varepsilon)
        \label{eq:positive-duration-surface-activity}
    \end{equation}
    holds for all sufficiently large \(n\).
\end{theorem}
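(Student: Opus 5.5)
The plan is to reuse the good-vertex construction behind Theorem~\ref{thm:MT-survival}. A vertex \(x\) is \emph{good} if its local Poisson clocks make it, once it becomes a spreader, transmit to all ignorant neighbours before any stifling attempt fires. Goodness is an i.i.d.\ field of Bernoulli variables with parameter \(q_\Delta(\rho)\). Since \(q_\Delta(\rho)>\widetilde p_G\), this field is supercritical for site percolation. Moreover, \(q_\Delta(\rho)>1-1/(N_\Lambda-1)\), so the bad vertices are rare enough that closed \(\Lambda\)-connected clusters are subcritical. I would compare them with Bernoulli percolation on the \(\Lambda\)-power graph, whose maximal degree is at most \(N_\Lambda-1\), and then use a Peierls/branching bound.

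By \(\Lambda\)-simple connectedness (Proposition~\ref{prop:G-is-coarsely-simply-connected}), the boundary of any finite set is \(\Lambda\)-connected. Hence, on the event that \(o\) lies in the infinite open cluster \(\mathcal C\), the cluster has no finite holes of large size. More precisely, every closed \(\Lambda\)-cluster has exponential tails, and \(\mathcal C\) meets each sphere \(\partial B_G(o,r)\) in a set of size \(\ge c r^{D-1}\) for all large \(r\). The latter is a surface-order density statement for the good cluster, which I would obtain from an ergodic/renormalization argument combined with the density statement \eqref{eq:main-positive-density}.

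For the dynamical part I would control arrival times. Along any open path, each step is an exponential clock of rate \(\ge\lambda\) (the infection attempt). So first-passage times along good paths are dominated by sums of i.i.d.\ exponentials, and \(\mathcal C\cap\partial B_G(o,r)\) is infected by time \(v_+ r\) with high probability, by a union bound over paths in \(\mathcal C\) with chemical distance \(\le Cr\). A chemical-distance comparison for supercritical clusters, available in the high-density regime via a Peierls argument, gives this. Conversely, infection cannot travel faster than a rate-\(\lambda\Delta\) first-passage process. Hence no vertex of \(\partial B_G(o,(1-\varepsilon/2)n)\) is infected before time \(v_-(\varepsilon)n\), which gives the lower time bound.

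The surface count comes next. Consider the \(\ge c n^{D-1}\) good vertices in the middle shell \(\mathcal Q_n^{\varepsilon}\) at radius about \((1-\varepsilon/2)n\). Each of them, upon becoming a spreader, stays a spreader for a period with positive mean. The reason is that goodness gives it a positive-probability local clock event \(E_x\), depending only on clocks near \(x\), under which its first stifling-type arrival comes after an independent \(\mathrm{Exp}\) time of duration at least \(s_0\). These events are independent over well-separated \(x\).

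I would then show that, with positive probability, the occupation measure \(\int_{v_-n}^{v_+n}|S_t\cap\mathcal Q_n^\varepsilon|\,dt\) is at least \(c' n^{D-1}\). A Paley--Zygmund / pigeonhole step then converts this into \eqref{eq:positive-duration-surface-activity}. The occupation integral is at most \((v_+-v_-)n\cdot\sup_t|S_t\cap\mathcal Q_n^\varepsilon|\). However, the uniform time bound requires instead showing that activity is concentrated in a time window of length \(O(1)\)-multiple density. For this I would use the surface-order count over a window where the front crosses the shell.

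The main obstacle is this last synchronisation step. Infection times of different shell vertices spread over an interval of order \(n\) (or at least \(\sqrt n\) fluctuations), so simultaneous activity of \(n^{D-1}\) spreaders is not automatic. My first attempt would be to partition \([v_-n,v_+n]\) into unit intervals. Each good shell vertex contributes spreader time \(\ge s_0\) to some interval. Summing gives total occupation \(\ge c n^{D-1}\) across \(O(n)\) intervals, so by pigeonhole some intervals carry \(\gtrsim n^{D-2}\); this loses a factor \(n\). To avoid that loss, I would instead let \(\varepsilon\) define shells of width proportional to \(n\), so that \(\mathcal Q_n^\varepsilon\) contains \(\asymp n^D\) good vertices. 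That yields occupation \(\ge c n^{D}\) over a window of length \(O(n)\), and the averaged density \(\ge c n^{D-1}\) per unit time gives a set of times of measure \(\ge\ell_{\mathrm{act}}\) where the count exceeds \(\delta_{\mathrm{act}} n^{D-1}\). Here the a priori bound \(|S_t\cap\mathcal Q_n^\varepsilon|\le C n^{D}\) is used in the reverse Markov step, with care.

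Positive probability uniformly in \(n\) comes from intersecting with \(\{o\in\mathcal C\}\) and the concentration of the sums of independent \(E_x\) indicators.
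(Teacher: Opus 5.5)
Your final argument is essentially the paper's proof: supercritical good-vertex percolation above $\widetilde p_G$, the high-density chemical-distance bound obtained from coarse simple connectivity, linear upper and lower bounds on infection times, order-$n^D$ cluster vertices in the width-$\varepsilon n$ annulus $\mathcal Q_n^\varepsilon$ each active for a fixed time, and the reverse-Markov step using $|S_t\cap\mathcal Q_n^\varepsilon|\le Cn^D$. The only variations are that the paper folds your duration event $E_x$ into an $(L,b)$-good definition and caps $U_x\le L$ instead of union-bounding exponential clocks along paths, while in your write-up the sphere-count detour is unnecessary, the no-early-activity bound must be taken at radius $(1-\varepsilon)n$, the annulus count needs the almost-sure ergodic limit of the cluster density rather than just the liminf in \eqref{eq:main-positive-density}, and you should close with Borel--Cantelli on your summable tail estimates (to get one positive-probability event on which the bound holds for all large $n$) rather than a probability bound uniform in $n$.
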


In particular,
\begin{equation}
    \mathbb{P}\left(
    \liminf_{n\to\infty}
    \frac{1}{n^{D-1}}
    \sup_{v_-(\varepsilon)n\le t\le v_+n}
    \lvert S_t\cap\mathcal{Q}_n^\varepsilon\rvert
    \ge \delta_{\mathrm{act}}(\varepsilon)
    \right)>0.
    \label{eq:surface-order-active-propagation}
\end{equation}

The explicit characterization \eqref{eq:active-propagation-high-density} shows in particular that
$\alpha_{\mathrm{act}}(G,\lambda)>0$. We expect the conclusion to remain valid under the weaker condition
$q_{\Delta}(\alpha/\lambda )>p_{c}^{\mathrm{site}}(G)$.

We next prove local extinction from arbitrary deterministic initial configurations. Thus, even when the rumor survives
globally, every finite region eventually contains no spreaders.

\begin{proposition}[Local extinction on bounded-degree graphs]
    \label{prop:local-extinction} Let $G=(V,E)$ be an infinite, connected
    graph of bounded degree, and consider the spatial Maki--Thompson rumor
    model on $G$ with transmission rate $\lambda>0$ and stifling rate
    $\alpha>0$, started from an arbitrary deterministic initial configuration
    \(
    \eta_{0}\in\{0,1,2\}^{V}.
    \)
    Then, for every finite set $K\subset V$, there exists an almost surely finite
    random time $\tau_{K}$ such that
    \[
        \eta_{t}(x)\ne1 \qquad \text{for every $x\in K$ and every
            $t\ge\tau_{K}$}.
    \]
\end{proposition}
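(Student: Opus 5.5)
The plan is to use the irreversibility of the dynamics, namely that each vertex's state is nondecreasing along \(0\to1\to2\), together with a purely pathwise argument on the Harris graphical representation. No branching or percolation comparison is needed. Since the state of each vertex can only increase, the set \(\{t\ge0:\eta_t(x)=1\}\) is an interval for every vertex \(x\), possibly empty. It therefore suffices to show that, almost surely, for each fixed \(x\in V\) this interval is bounded. We then set \(\tau_x:=\sup\{t:\eta_t(x)=1\}\), with \(\tau_x:=0\) if \(x\) is never a spreader, and take \(\tau_K:=\max_{x\in K}\tau_x\). This maximum is finite because \(K\) is finite.

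First I will fix a full-measure event \(\Omega_0\). On \(\Omega_0\), every Poisson process \(N_{\mathrm{inf}}^{x,y}\) and \(N_{\mathrm{stif}}^{x,y}\) has infinitely many arrivals accumulating only at infinity, and no two processes share an arrival time. This event has probability one since there are only countably many ordered edges and \(\lambda,\alpha>0\). The point of this choice is that after any time \(s\), even a random one, each clock still rings at some later time. This avoids any appeal to the strong Markov property.

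Next I will show, on \(\Omega_0\), that each spreader eventually acquires a non-ignorant neighbor. There are two cases.
\begin{itemize}
\item If \(x\) becomes a spreader at some time \(s>0\), this happens at an arrival of \(N_{\mathrm{inf}}^{y,x}\) with \(\eta_{s-}(y)=1\). By monotonicity, \(\eta_t(y)\in\{1,2\}\) for all \(t\ge s\).
\item If \(\eta_0(x)=1\) and some neighbor is non-ignorant at time \(0\), we take \(s=0\).
\item If \(\eta_0(x)=1\) and all neighbors are ignorant, then \(x\) cannot be stifled as long as all its neighbors remain ignorant. Pick any neighbor \(y\), which exists because \(G\) is connected and infinite. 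At the first arrival of \(N_{\mathrm{inf}}^{x,y}\), either \(y\) is still ignorant and becomes a spreader, or \(y\) has already become non-ignorant. In both cases, at some finite time \(s\) a neighbor \(y\) of \(x\) is non-ignorant while \(x\) is still a spreader. The neighbor then stays non-ignorant forever.
\end{itemize}

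Finally, let \(s\) and \(y\) be as above. At the first arrival of \(N_{\mathrm{stif}}^{x,y}\) after \(s\), which exists on \(\Omega_0\), either \(x\) has already become a stifler or it is a spreader with \(\eta(y)\in\{1,2\}\) and is stifled. Hence \(\tau_x<\infty\).

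The only delicate point, which I expect to be the main obstacle, is making this pathwise reasoning legitimate on an infinite graph with arbitrary, possibly infinite, initial configurations. The process must be determined by the graphical representation in such a way that the trajectory of \(\eta(x)\) and \(\eta(y)\) is read off from the clocks, and jump times are well ordered locally. I will invoke the standard Harris construction cited in the model section. For bounded degree, this construction shows that on a further full-measure event, the state of any finite set on a finite time window is determined by finitely many arrivals in a finite random region. This gives a well-defined càdlàg trajectory to which the above case analysis applies.
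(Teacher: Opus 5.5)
Your proof is correct and follows essentially the same route as the paper: each vertex is a spreader during at most one time interval, it acquires a permanently non-ignorant neighbor in finite time (its parent, or a neighbor it infects itself), after which it is stifled in finite time, and \(\tau_K\) is the maximum of these last spreader times over \(K\). Your pathwise device of working on the full-measure event where every clock rings again after any (random) time is a slightly cleaner way to justify the paper's rate-based phrasing (``stifled at rate at least \(\alpha\)'') without invoking the strong Markov property.
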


\subsection{Fluctuation Results in Growing Balls}
\label{subsec:fluctuation-results}

Throughout this section, we fix $\lambda,\alpha>0$ and let $G=\operatorname{Cay}(\Gamma,S)$ be a Cayley graph of
polynomial growth. Set $\Delta:=|S|$, and let $o\in V$ denote the identity element. Assume that the initial states are
i.i.d., with
\[
    \mathbb{P}(\eta_{0}(x)=1)=\beta, \qquad \mathbb{P}(\eta_{0}(x)=0)=1-\beta
    , \qquad \mathbb{P}(\eta_{0}(x)=2)=0,
\]
for some $\beta\in(0,1)$.

Throughout this subsection, assume either that \( m_{\Delta}(\alpha/\lambda)<1, \) or that \(
0<\alpha<\alpha_{\mathrm{fl}}(G,\lambda,\beta), \) where the existence of $\alpha_{\mathrm{fl}}(G,\lambda,\beta)>0$ is
established in Proposition~\ref{prop:verification-mt-observables}.

All expectations below are taken with respect to both the initial configuration and the graphical construction. For
\(n\ge1\), let \(B_n:=B_G(o,n)\). The process evolves on the whole graph \(G\); the ball \(B_n\) is used only as an
observation window. Set
\[
    \mathcal A_n:=\mathcal{A}_\infty\cap B_n
\]
and define
\[
    \kappa_n:=\frac{|\mathcal A_n|}{|B_n|}.
\]
By Proposition~\ref{prop:local-extinction}, applied conditionally on the initial configuration, every vertex reached
  by the rumor eventually becomes a stifler almost surely. Since there are no initial stiflers, \(\kappa_n\) is both
  the density of the range and the final stifler density in \(B_n\).

By transitivity, equivariance of the graphical construction, and translation invariance of the initial law, the
distribution of $(\eta_t(x))_{t\ge0}$ does not depend on $x$.

Set \( \kappa := \mathbb{P}\bigl(\eta_{t}(o)=2\text{ for some }t\ge0\bigr). \) Then \( \mathbb{E}[\kappa_{n}]=\kappa.
\)

\begin{theorem}[Central limit theorem for the final stifler density]	\label{thm:clt-final-stifler-proportion-balls} There exists $\sigma_{\kappa}
        ^{2}>0$ such that
    \[
        |B_{n}|^{1/2}\bigl(\kappa_{n}-\kappa\bigr) \xrightarrow[n\to\infty]{d}
        \mathcal{N}(0,\sigma_{\kappa}^{2}).
    \]
\end{theorem}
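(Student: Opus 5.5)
The plan is to write \(|B_n|\kappa_n=\sum_{x\in B_n}\xi_x\) with \(\xi_x:=\mathbbm{1}\{x\in\mathcal A_\infty\}\), and to derive the limit from the abstract central limit theorem for stationary stabilizing functionals of i.i.d.\ fields (Subsection~\ref{subsec:abstract-clt}).

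\textbf{The underlying field.} We encode the randomness as an i.i.d.\ field indexed by \(V\). To each vertex \(x\) we attach the mark
\[
\omega_x=\bigl(\eta_0(x),(N_{\mathrm{inf}}^{x,xs},N_{\mathrm{stif}}^{x,xs})_{s\in S}\bigr),
\]
so that the marks are i.i.d.\ under the product law. Group elements act by left translation, which commutes with the Harris construction. Hence \(\xi_x=\xi_o\circ\theta_x\) for the shift \(\theta_x\), and the functional \(H_n(\omega)=\sum_{x\in B_n}\xi_x(\omega)\) is stationary in the required sense.

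Balls \(B_n\) in a polynomial-growth Cayley graph form an exhaustive two-sided F{\o}lner sequence. Indeed, \(|B_{n+r}\setminus B_{n-r}|=o(|B_n|)\) follows from the Bass--Guivarc'h bounds, possibly along the full sequence via Breuillard's asymptotic \(|B_n|\sim c n^D\). Left and right versions agree for balls about the identity, since \(B_n^{-1}=B_n\).

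\textbf{Step 1: martingale decomposition.} We apply the abstract theorem, which writes \(H_n-\E H_n=\sum_z D_{n,z}\) with martingale differences in a fixed enumeration of \(V\). The hypotheses to verify are stabilization of the add-one (resampling) cost and a uniform moment bound. Concretely, let \(\omega^{(z)}\) be \(\omega\) with the coordinate \(\omega_z\) replaced by an independent copy. Set
\[
\Delta_z H_n:=H_n(\omega)-H_n(\omega^{(z)})=\sum_{x\in B_n}\bigl(\xi_x(\omega)-\xi_x(\omega^{(z)})\bigr).
\]
The verification Proposition~\ref{prop:verification-mt-observables} is expected to provide the following, under either \(m_\Delta(\alpha/\lambda)<1\) or \(\alpha<\alpha_{\mathrm{fl}}\).
\begin{itemize}
\item[(i)] The discrepancy set \(\{x:\xi_x(\omega)\neq\xi_x(\omega^{(z)})\}\) is a.s.\ finite.
\item[(ii)] Its cardinality has uniformly bounded \(2+\delta\) (or fourth) moment.
\item[(iii)] \(\Delta_zH_n\) converges a.s., as \(B_n\to V\), to \(\Delta_z H_\infty\), the total discrepancy count, which is shift-covariant.
\end{itemize}
The localization also requires the discrepancy to be approximable by events depending on \(\omega\) restricted to a ball of radius \(R\) around \(z\), with error negligible on the variance scale. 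This is the point where the abstract theorem's allowance for dependence on the field outside the window is used.

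In the subcritical case, the discrepancy cluster is dominated by a subcritical branching process (the comparison behind Theorem~\ref{thm:MT-positive-subcritical}) seeded at the neighbours of \(z\). This gives exponential tails. In the small-\(\alpha\) case the argument is different. With Bernoulli(\(\beta\)) initial spreaders and small stifling, every ignorant vertex is reached quickly. The discrepancy therefore stays confined, with tails controlled via a percolation-of-good-vertices argument as in Theorem~\ref{thm:MT-survival}.

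\textbf{Step 2: conclusion of the abstract theorem.} The abstract theorem then gives
\[
|B_n|^{-1/2}(H_n-\E H_n)\to\mathcal N(0,\sigma_\kappa^2),
\qquad
\sigma_\kappa^2=\E\bigl[(\E[\Delta_o H_\infty\mid\mathcal F_o])^2\bigr],
\]
where \(\mathcal F_o\) is the past \(\sigma\)-field in a lexicographic-type ordering of \(\Gamma\). The ordering only needs to be invariant, in the sense of McLeish-type arguments. Dividing by \(|B_n|\) yields the stated limit, because \(\E\kappa_n=\kappa\).

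\textbf{Step 3: strict positivity of the variance (main obstacle).} The non-attractiveness of the process is what makes \(\sigma_\kappa^2>0\) delicate. I would first try the standard criterion that \(\sigma^2>0\) as soon as \(\Delta_oH_\infty\) is not a.s.\ zero after conditioning on the past.

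To this end, exhibit a positive-probability local configuration in which resampling \(\omega_o\) alone changes the final count. Take \(\eta_0(o)=0\) with all neighbours ignorant and all neighbours' clocks arranged so that they stifle before transmitting to \(o\). Then \(o\) is never reached, while \(o\) becomes reached if \(\omega_o\) is replaced by \(\eta_0(o)=1\).

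The configuration outside a large ball must not undo this change. To handle that, condition on the finite-range localization from Step 1 and use independence of the remaining field. This gives a conditional variance bounded below by a positive constant, whence \(\sigma_\kappa^2>0\).
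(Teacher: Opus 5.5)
Your argument for the central limit theorem itself, that is, the existence of $\sigma_\kappa^2\ge0$, follows the paper's route: cite Proposition~\ref{prop:verification-mt-observables} and apply Theorem~\ref{thm:abstract-clt-functional} to $F_{\mathcal A}(X,B_n)=|B_n|\kappa_n$. Two points in your sketch are inaccurate. They do no harm at that stage, but the first one matters in Step~3.
First, $\Gamma$ may have torsion and then is not left-orderable. The variance is therefore $\frac{1}{|Y|}\sum_{y\in Y}\mathbb E[\mathbb E[\nabla_y(\infty)\mid\mathcal F_y]^2]$, taken over coset representatives of a finite-index normal left-orderable subgroup $H$, and not a single term at $o$ under an invariant order.
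Second, in the small-$\alpha$ regime, confinement does not come from the good-vertex percolation of Theorem~\ref{thm:MT-survival}, which produces propagation. It comes from good blocks that build, in an annulus, a sphere of stiflers common to both coupled processes before the discrepancy arrives.

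The genuine gap is Step~3, for two reasons.

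\emph{Your configuration does not isolate $o$.} If $\omega_o$ is resampled to $\eta_0(o)=1$, then $o$ is a spreader surrounded by ignorant vertices. With positive probability it infects a neighbour before being stifled. This changes that neighbour's infection time, and hence its later transmissions and stiflings. Because the process is not attractive, the resulting change of the range away from $o$ has no controlled sign and can cancel the $+1$ at $o$. Also, ``the neighbours stifle before transmitting to $o$'' is not a local event. An ignorant neighbour must first be infected from outside, and it can be stifled only by a mark toward an already non-ignorant vertex; both depend on the global field.

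\emph{Positivity of $\mathbb P(\nabla_o(\infty)\neq0)$ would not suffice.} The criterion is $\mathbb E[\nabla_y(\infty)\mid\mathcal F_y]\not\equiv0$. This conditional expectation averages over $X_y'$ and over all future coordinates, so you need an \emph{exactly} isolating event that is $\mathcal F_y$-measurable. Under a left-invariant order, as you assume, the neighbours of $o$ lie on both sides, since $s>e$ forces $s^{-1}<e$. ``Conditioning on the finite-range localization'' cannot replace this, because Step~1 gives only tail bounds, not exact isolation.

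The paper's fix uses the event $\mathcal E_y$: every $z\sim y$ and a chosen $w_z\sim z$ with $w_z\neq y$ are initial spreaders, and the first mark with source $z$ is the stifling mark toward $w_z$. On $\mathcal E_y$:
\begin{itemize}
\item each $z$ is stifled without ever transmitting;
\item infection marks from $y$ hit non-ignorant targets;
\item stifling marks from $y$ change only $y$.
\end{itemize}
Hence $\nabla_y(\infty)=I_y-I_y'$ exactly, and $\mathbb E[\nabla_y(\infty)\mid\mathcal F_y]=I_y-\beta$. To make $\mathcal E_y$ belong to $\mathcal F_y$, the paper uses residual finiteness to shrink $H$ so that $H\cap B_G(o,2)=\{e\}$, and takes $y$ to represent the last coset. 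Then $B_G(y,2)\subset\mathfrak F_y$, and
\[
\sigma_\kappa^2\ge\frac{\mathbb P(\mathcal E_y)\,\beta(1-\beta)}{|Y|}>0.
\]
With this exactly isolating event, your conditioning idea also works without the coset device. Condition on all coordinates except those of a well-separated set of order $|B_n|$ sites in $B_n$. On the isolation events, $F_{\mathcal A}$ is exactly additive in those sites, which gives $\operatorname{Var}(F_{\mathcal A}(X,B_n))\ge c|B_n|$.
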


We next consider the total spreader occupation time accumulated over \(B_n\):
\[
    \Xi_{n}:= \sum_{x\in B_n}\int_{0}^{\infty}\mathbf{1}_{\{\eta_t(x)=1\}}\,dt.
\]
Set
\[
    \xi := \mathbb{E}\!\left[ \int_{0}^{\infty}\mathbf{1}_{\{\eta_t(o)=1\}}\,
        dt \right].
\]
The time a vertex spends in the spreader state is stochastically dominated by $E_{\lambda}+E_{\alpha}$, where
  $E_{\lambda}$ and $E_{\alpha}$ are independent exponential random variables of rates $\lambda$ and $\alpha$,
  respectively. If a spreader has no non-ignorant neighbor, it creates one at total rate at least $\lambda$;
  thereafter, since non-ignorance is absorbing, it is stifled at rate at least $\alpha$. In particular, \(
  \xi\le\lambda^{-1}+\alpha^{-1}<\infty, \) and therefore \( \mathbb{E}[\Xi_{n}]=|B_{n}|\,\xi \) for every $n\ge1$.

\begin{theorem}[Central limit theorem for the total spreader occupation time] 	\label{thm:clt-spreader-occupation-balls} There exists $\sigma_{\Xi}^{2}>
        0$ such that
    \[
        |B_{n}|^{-1/2}\bigl(\Xi_{n}-|B_{n}|\xi\bigr) \xrightarrow[n\to\infty]
        {d}\mathcal{N}(0,\sigma_{\Xi}^{2}).
    \]
\end{theorem}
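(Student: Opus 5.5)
The plan is to derive the statement from the abstract central limit theorem for stationary stabilizing functionals of i.i.d.\ fields (Subsection~\ref{subsec:abstract-clt}), in the same way as Theorem~\ref{thm:clt-final-stifler-proportion-balls}. The underlying field is \(\omega=(\omega_x)_{x\in V}\). Here \(\omega_x\) is the initial state \(\eta_0(x)\) together with the Poisson processes \(N_{\mathrm{inf}}^{x,y},N_{\mathrm{stif}}^{x,y}\) on the edges issued from \(x\). These are i.i.d.\ under the Bernoulli initial law, and the group \(\Gamma\) acts on them by translations. The summand is
\[
\psi_x(\omega):=\int_0^\infty\mathbf 1_{\{\eta_t(x)=1\}}\,dt,
\]
and the equivariance of the graphical construction gives \(\psi_x=\psi_o\circ\theta_x\). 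Then \(\Xi_n=\sum_{x\in B_n}\psi_x\), balls \(B_n\) in a polynomial-growth Cayley graph form an exhaustive two-sided F\o lner sequence, and \(\psi_x\) depends on \(\omega\) outside \(B_n\). This is exactly the setting the abstract theorem is built for.

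\textbf{Step 1: moments.} The domination of \(\psi_o\) by \(E_\lambda+E_\alpha\) noted before the statement gives exponential moments of \(\psi_o\). Hence \(\psi_o\) has moments of every order, and this covers the integrability assumptions of the abstract CLT.

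\textbf{Step 2: stabilization.} I would bound the add-one-cost (resampling) differences \(D_z\Xi_n\), obtained by replacing \(\omega_z\) with an independent copy, using the localization estimates of Proposition~\ref{prop:verification-mt-observables}. Those estimates hold under \(m_\Delta(\alpha/\lambda)<1\) or \(\alpha<\alpha_{\mathrm{fl}}\). The difference \(D_z\psi_x\) is nonzero only if the discrepancy between the two coupled trajectories reaches \(x\). The spatial tail bound on the discrepancy cluster gives \(\mathbb P(D_z\psi_x\ne0)\le Ce^{-c\,d_G(z,x)}\), or at least summable decay. The contribution from times beyond \(T\) is controlled by \(\psi_x\mathbf 1_{\{\psi_x>T\}}\) together with the temporal localization. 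Combining these bounds with Hölder's inequality and Step 1 yields \(\sup_n\|D_z\Xi_n\|_{4}<\infty\) and almost sure convergence \(D_z\Xi_n\to\Delta_z\) as \(B_n\uparrow V\). It also yields the local conditional approximation: \(\mathbb E[\Xi_n\mid\omega|_{B_{n+r}}]\) approximates \(\Xi_n\) with \(L^2\)-error \(o(|B_n|^{1/2})\) when \(r=r_n\) grows slowly, since the boundary layer has size \(O(n^{D-1}r)\) and the tail contributions are tiny. The abstract theorem then gives convergence to \(\mathcal N(0,\sigma_\Xi^2)\) with \(\sigma_\Xi^2=\mathbb E\bigl[(\mathbb E[\Delta_o\mid\mathcal F_{\preceq o}])^2\bigr]\), in the martingale-difference form.

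\textbf{Step 3: positivity of \(\sigma_\Xi^2\).} This is the main obstacle. I would reduce it to showing that \(\Delta_o\) is not almost surely zero, or more precisely that its conditional expectation given the past is nondegenerate. The first thing to try is a local modification event. Condition on an event \(F\) of positive probability on which all vertices in \(B(o,2)\setminus\{o\}\) are initially ignorant, no spreaders enter \(B(o,3)\) from outside before a fixed time, and every clock on edges at \(o\) fires only after a large time \(T\). Then the only change is \(\eta_0(o)\) flipping from \(1\) to \(0\). If \(o\) is a spreader in this isolated configuration, it contributes occupation time of order at least \(T\) wherever its first contact occurs. If \(o\) is ignorant and nothing reaches it, it contributes \(0\). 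Averaging over the resampled coordinate gives \(\mathbb E[\Delta_o\mid\mathcal F]\ne0\) on \(F\). The spatial decay bounds the effects propagated to other vertices, so they cannot cancel the leading term when \(T\) is large. Finally, I would check that this nondegeneracy persists after taking conditional expectation with respect to the lexicographic past, following the same route used for \(\sigma_\kappa^2>0\).
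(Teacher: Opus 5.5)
Your Steps 1--2 reduce the convergence to Proposition~\ref{prop:verification-mt-observables} together with Theorem~\ref{thm:abstract-clt-functional}, which is exactly what the paper does. Three small corrections apply there. The moment hypothesis concerns the truncated resampling differences $\nabla^{(R)}_y$, not $\psi_o$. Local approximation is needed for fixed $R$ in the $\lim_R\limsup_n$ sense. Because the order is only $H$-invariant, the limit variance is $|Y|^{-1}\sum_{y\in Y}\mathbb E[d_y^2]$ with $d_y:=\mathbb E[\nabla_y(\infty)\mid\mathcal F_y]$, not a single term at $o$.

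The genuine gap is Step~3. You must show $\mathbb E[d_y^2]>0$ for some $y\in Y$, where $\mathcal F_y$ contains $X_y$ and the coordinates \emph{below} $y$ only. Both $X'_y$ and every coordinate above $y$ are averaged out. Your event $F$ is not $\mathcal F_o$-measurable. In general some vertices of $B(o,2)$ lie above $o$ in the order, and the clause about spreaders entering $B(o,3)$ depends on the whole configuration. Knowing that $\Delta_o$, or $\mathbb E[\Delta_o\mid\sigma(X)]$, is large on $F$ gives no lower bound on $\mathbb E[d_o^2]$, since averaging over the future coordinates can cancel it. Your final sentence defers exactly the step that needs an idea; the paper proves all three positivity claims by a single argument, so there is no separate route for $\sigma_\kappa^2$ to borrow.

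Independently, $F$ does not isolate $o$. With ignorant neighbours, a spreader at $o$ eventually infects them, so $\nabla_o(\infty)$ carries off-site contributions. Their conditional size on the rare event $F$ is not controlled by the unconditional moment bounds. Also, the clocks of $o$ belong to the resampled coordinate, so resampling is not just a flip of $\eta_0(o)$.

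The paper closes this by exploiting freedom in the order. Since $\sigma_\Xi^2$ is intrinsic (it is the limit of $|B_n|^{-1}\operatorname{Var}(\Xi_n)$), residual finiteness of $\Gamma$ lets one shrink $H$ so that $H\cap B_G(o,2)=\{e\}$. Taking $y$ as the representative of the last right coset, normality then gives $B_G(y,2)\subset\mathfrak F_y$.

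The shield is the event $\mathcal E_y$: every $z\sim y$ and a chosen $w_z\sim z$ with $w_z\ne y$ are initial spreaders, and the first mark with source $z$ is the stifling mark toward $w_z$. This event is $\mathcal F_y$-measurable and independent of $X_y$. On it, each neighbour of $y$ follows the same trajectory in both coupled processes, because it is stifled by its first source mark before it can transmit. Meanwhile, the infection marks of $y$ hit non-ignorant targets. So the resampling affects only $y$, and $\nabla_yF_\Xi(\infty)=I_yT_y-I'_yT'_y$, with $I_y\sim\mathrm{Bernoulli}(\beta)$ and $T_y\sim\mathrm{Exp}(\mu)$ independent, $\mu=\alpha\Delta$. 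Hence $d_y=I_yT_y-\beta/\mu$ on $\mathcal E_y$, and
\[
\sigma_\Xi^2\ge\frac{\mathbb P(\mathcal E_y)}{|Y|}\,\frac{\beta(2-\beta)}{\mu^2}>0.
\]
The shield deliberately uses non-ignorant neighbours that are stifled immediately, the opposite of your ignorant-neighbour design, so that the influence of $X_y$ cannot leave $y$.
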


Finally, for each \(x\in V\), define the last time at which \(x\) is in the spreader state
\[
    L_{x}:=\sup\{t\ge0:\eta_{t}(x)=1\},
\]
with $L_{x}=0$ if $x$ never becomes a spreader. For fixed $t\ge0$, set
\[
    S_{n}(t) := \sum_{x\in B_n}\mathbf{1}_{\{L_x>t\}}, \qquad s_{n}(t):=\frac{S_{n}(t)}{|B_{n}|}
    .
\]
Thus $S_{n}(t)$ counts the vertices of $B_{n}$ that are spreaders at some time after $t$.

Define \( s(t):=\mathbb{P}(L_{o}>t), \qquad t\ge0. \) Then \( \mathbb{E}[s_{n}(t)]=s(t). \) Since a vertex that ever
becomes a spreader remains in that state for a positive time almost surely and eventually becomes a stifler almost
surely, we also have \( s(0)=\kappa. \)

\begin{theorem}[Functional central limit theorem for the empirical survival function]
    \label{thm:functional-clt-empirical-survival-function} For every $T>0$,
    define
    \[
        \mathbb{S}_{n}(t) := |B_{n}|^{1/2}\bigl(s_{n}(t)-s(t)\bigr), \qquad 0
        \le t\le T.
    \]
    Then
    \[
        \mathbb{S}_{n}\xrightarrow[n\to\infty]{d}\mathbb{S}\qquad\text{in }D(
        [0,T]),
    \]
    where $D([0,T])$ is endowed with the Skorokhod $J_{1}$-topology and $\mathbb{S}=(\mathbb{S}(t))_{0\le t\le T}$ is a
    centered Gaussian process with continuous sample paths. The covariance kernel of the limiting process is
    \[
        K_S(s,t)
        :=
        \lim_{n\to\infty}
        \frac{1}{|B_n|}
        \operatorname{Cov}\bigl(S_n(s),S_n(t)\bigr),
        \qquad s,t\ge0,
    \]
    where the existence of the limit follows from the proof below. Moreover, \[\sigma_S^2(t):=K_S(t,t)>0 \text{for all} t\ge0.\]
\end{theorem}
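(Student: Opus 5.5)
Fix \(T>0\). The plan is the standard two-step route to a functional CLT in \(D([0,T])\): first prove convergence of the finite-dimensional distributions, then prove tightness, and finally identify the limit covariance and show \(\sigma_S^2(t)>0\).

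\textbf{Finite-dimensional distributions.} Fix \(0\le t_1<\dots<t_k\le T\) and coefficients \(a_1,\dots,a_k\). Consider the functional
\[
F_n:=\sum_{x\in B_n}\sum_{i=1}^k a_i\bigl(\mathbf 1_{\{L_x>t_i\}}-s(t_i)\bigr).
\]
It is a stationary additive functional of the i.i.d.\ field formed by the initial states and the Poisson clocks attached to vertices and edges. I will apply the abstract CLT of Subsection~\ref{subsec:abstract-clt} along the balls \(B_n\), which form an exhaustive two-sided F\o lner sequence in polynomial growth. The required input is the localization of single-coordinate resampling discrepancies. Proposition~\ref{prop:verification-mt-observables} gives this under \(m_\Delta(\alpha/\lambda)<1\) or \(\alpha<\alpha_{\mathrm{fl}}\). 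I will use it for the indicator observables \(\mathbf 1_{\{L_x>t\}}\) exactly as for \(\kappa_n\), which is the case \(t=0\). The changed vertex set after resampling is confined to a random cluster with a summable tail, uniformly in \(t\in[0,T]\). Cramér--Wold then gives joint Gaussian limits with covariance \(K_S(s,t)\), expressed as \(\E[\Delta_o(s)\Delta_o(t)]\)-type limits of stabilized add-one differences.

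\textbf{Tightness.} Each \(s_n\) is a monotone nonincreasing empirical survival function, and \(s\) is continuous: \(L_o\) has no atoms on \((0,\infty)\) because all jump times come from Poisson clocks. For \(u<t\) I will therefore use a Bickel--Wichura/Billingsley moment criterion on increments,
\[
\E\bigl[|\mathbb S_n(t)-\mathbb S_n(u)|^2|\mathbb S_n(u)-\mathbb S_n(r)|^2\bigr]\le C\,(s(r)-s(t))^{1+\gamma},
\]
or the simpler second-moment bound \(\E|\mathbb S_n(t)-\mathbb S_n(u)|^2\le C(s(u)-s(t))\) combined with the monotonicity trick used for empirical processes. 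That trick controls \(\sup\) over a grid mesh via monotonicity plus the modulus of continuity of \(s\). The variance bound comes from the Efron--Stein (martingale-difference) inequality. The resampling discrepancy of \(\sum_x\mathbf 1_{\{u<L_x\le t\}}\) is bounded by the number of vertices in the discrepancy cluster whose last spreader time lies in \((u,t]\). Its second moment should be \(\lesssim\) the interval probability, using the exponential tails of the cluster and of the spreader duration. Continuity of the limit paths then follows from Kolmogorov's criterion for the Gaussian process, since \(K_S(t,t)-2K_S(s,t)+K_S(s,s)\le C|s(s)-s(t)|\).

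\textbf{Positivity and main obstacle.} For positivity of \(\sigma_S^2(t)\) I will follow the variance positivity argument of the other two CLTs. By the martingale representation, it suffices to find one coordinate whose resampling changes \(\sum_x\mathbf 1_{\{L_x>t\}}\) with a conditional expectation that is nonzero with positive probability. The configuration I will use is an isolated initial spreader surrounded by ignorants with all nearby clocks silent until time \(t+1\) versus none. This changes \(\mathbf 1_{\{L_o>t\}}\) and, with positive probability, nothing else. The main obstacle is the tightness step. The Efron--Stein bound must yield the increment estimate in terms of \(s(u)-s(t)\) rather than merely \(O(1)\), which requires that the discrepancy cluster and the event \(\{L_y\in(u,t]\}\) be nearly decoupled. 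I would first try conditioning on the cluster, which is measurable with respect to the clocks outside a window, and use the uniform exponential tail from Proposition~\ref{prop:verification-mt-observables} with Cauchy--Schwarz. This would give exponent \(1/2\) on the interval probability; I would then fall back on a chaining argument with that weaker modulus.
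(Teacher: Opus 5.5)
Your finite-dimensional step is the same as the paper's: the abstract CLT applied to $\sum_i a_iF_{S(t_i)}$ via Proposition~\ref{prop:verification-mt-observables}, followed by Cram\'er--Wold. The tightness step, however, has a genuine gap. You leave it open yourself, and the proposed fallback cannot close it.

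\textbf{Tightness.} A bound $\E|\mathbb S_n(t)-\mathbb S_n(u)|^2\le C(s(u)-s(t))^{1/2}$, or even one with exponent $1$, does not give tightness by chaining. Kolmogorov-type criteria need an exponent strictly larger than one. The monotonicity trick only controls oscillation on cells where $|B_n|^{1/2}(s(t_i)-s(t_{i+1}))$ is small, so it needs a grid with more than $|B_n|^{1/2}$ cells. A union bound over those cells using only second moments does not make the maximal increment small. The product-moment (Bickel--Wichura) bound used for i.i.d.\ empirical processes would here require fourth-order decoupling estimates for a nonlocal functional, and Efron--Stein does not supply these.

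The missing idea is to exploit the counting-process structure in the time variable. Since there are no initial stiflers, $S_n(t)=S_n(0)-N_n(t)$ with $N_n(t)=\sum_{x\in B_n}\mathbf 1_{\{\eta_t(x)=2\}}$. This process has unit jumps and predictable intensity $\lambda_n(t)\le\alpha\Delta|B_n|$. The paper then splits the normalized fluctuation into two pieces.
\begin{itemize}
\item The martingale part $|B_n|^{-1/2}\bigl(N_n-\int_0^{\cdot}\lambda_n\,du\bigr)$ has jumps at most $|B_n|^{-1/2}$ and a uniformly Lipschitz predictable quadratic variation. The Jacod--Shiryaev criterion for square-integrable martingales therefore makes it $C$-tight.
\item The drift $\mathbb A_n(t)=|B_n|^{-1/2}\int_0^t(\lambda_n(u)-\E\lambda_n(u))\,du$ satisfies $\E|\mathbb A_n(t)-\mathbb A_n(s)|^2\le (t-s)^2\sup_{u\le T}\operatorname{Var}(\lambda_n(u))/|B_n|\le C(t-s)^2$.
\end{itemize}
The variance bound in the second item is Efron--Stein applied to the fixed-time quantity $\lambda_n(u)$. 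Its resampling difference is at most $C(1+R_y)^D$ and vanishes unless $d_G(y,B_n)\le R_y+1$. The smallness thus comes from the length of the time interval. No near-decoupling of the discrepancy cluster from $\{L_y\in(u,t]\}$ is needed, the exponent $2$ makes Kolmogorov's criterion applicable, and continuity of the limit follows from $C$-tightness.

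\textbf{Positivity.} Your sketch fails as stated, for two reasons.
\begin{itemize}
\item A spreader at $y$ whose neighbors are all ignorant cannot be stifled until some neighbor becomes non-ignorant. Unless a neighbor is reached from elsewhere, it eventually infects one, so the discrepancy does propagate and ``nothing else changes'' is false. Because the process is not attractive, you cannot control the sign of the extra contributions either.
\item $\sigma_S^2(t)$ is expressed through $\E[\nabla_y(\infty)\mid\mathcal F_y]$, so the configuration must be measurable with respect to the past $\mathcal F_y$. An event about ``nearby clocks'' need not be.
\end{itemize}
The paper handles both points. It makes every neighbor $z$ of $y$ an initial spreader whose first own mark is a stifling mark toward an initially spreading $w_z\ne y$. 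The neighbors' trajectories then do not depend on $X_y$, and $L_y=I_yT_y$ with $T_y\sim\operatorname{Exp}(\alpha\Delta)$. For measurability, it shrinks $H$ using residual finiteness so that $H\cap B_G(o,2)=\{e\}$. It then takes $y$ to be the last coset representative, which gives $B_G(y,2)\subset\mathfrak F_y$.
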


In particular, evaluation at any fixed $t\in[0,T]$ yields the corresponding one-dimensional central limit theorem for
$s_{n}(t)$.

\section{Geometric and Probabilistic Tools}
\label{sec:geometric-probabilistic-tools}

\subsection{Central Limit Theorem for Stabilizing Functionals}
\label{subsec:abstract-clt}

We establish a central limit theorem for stationary functionals of i.i.d.\ fields indexed by polynomial-growth Cayley
graphs, combining stabilization of resampling differences with approximation by local conditional functionals. This
extends Penrose's stabilization argument and accommodates functionals depending on coordinates outside the observation
set, provided that their local conditional approximation error is negligible on the variance scale. A similar
martingale construction for Bernoulli bond percolation on amenable Cayley graphs appears in \cite{de2026topology}.

For sets $A,B$, write \( A\oplus B:=(A\setminus B)\cup(B\setminus A) \) for their symmetric difference.

\begin{definition}[Exhaustive two-sided F{\o}lner sequence]
    Let $\Gamma$ be a finitely generated group. A sequence of finite sets 	$(A_{n})_{n\ge1}$ in $\Gamma$ is called an exhaustive two-sided F{\o}lner
    sequence if
    \[
        A_{n}\subseteq A_{n+1}, \qquad \bigcup_{n\ge1}A_{n}=\Gamma,
    \]
    and, for every finite set $K\subset\Gamma$,
    \[
        \frac{|KA_{n}\oplus A_{n}|+|A_{n}K\oplus A_{n}|}{|A_{n}|}\longrightarrow
        0 \qquad\text{as }n\to\infty.
    \]
\end{definition}

Let $G=(V,E)=\operatorname{Cay}(\Gamma,S)$ be a Cayley graph of an infinite group of polynomial growth. Let
$(\Theta,\mathcal{E},\mathbb{P}_{0})$ be a probability space, set $\Omega:=\Theta^{V}$ and
$\mathbb{P}:=\mathbb{P}_{0}^{\otimes V}$, and let $\mathcal{F}$ be the product $\sigma$-algebra. Write $X=(X_{x})_{x\in
V}$ for the corresponding i.i.d.\ field with common law $\mathbb{P} _{0}$.

Let $(A_{n})_{n\ge1}$ be an exhaustive two-sided F{\o}lner sequence and set $\mathfrak{A} :=\{gA_{n}:g\in\Gamma,\
n\ge1\}$. A measurable map $F:\Omega\times\mathfrak{A}\longrightarrow\mathbb{R}$ is called an $\mathfrak{A}$-functional
of $X$, and is denoted by $F(X,A)$.

Let $X'=(X'_{x})_{x\in V}$ be an independent copy of $X$. For each $y\in V$, let $X^{y}$ be obtained from $X$ by
replacing $X_{y}$ with $X'_{y}$. Define the diagonal action
\[
    \widehat\varphi_{g}(X,X') := \bigl( \widetilde\varphi_{g}X, \widetilde\varphi
    _{g}X' \bigr),
\]
where $(\widetilde\varphi_{g}X)_{x}:=X_{g^{-1}x}$. The dependence of the resampling differences on $X'$ will be
  suppressed from the notation. We say that $F$ is \emph{stationary} if $F(\widetilde\varphi_{g}X,gA)=F(X,A)$ for every
  $g\in\Gamma$ and $A\in\mathfrak{A}$.

\begin{definition}[Resampling difference]
    For $y\in V$ and $A\in\mathfrak{A}$, define
    \[
        \nabla_{y}(X,A) := F(X,A)-F(X^{y},A).
    \]
\end{definition}

For $R\ge0$ and finite $A\subset V$, define
\[
    \mathcal{G}_{R}(A) := \sigma\bigl(X_{z}:d_{G}(z,A)\le R\bigr).
\]
Fix a $\mathcal{G}_{R}(A)$-measurable version
\[
    F^{(R)}(X,A) := \mathbb{E}\bigl[F(X,A)\mid\mathcal{G}_{R}(A)\bigr].
\]
Since graph distance is integer-valued and the relevant family is countable, these versions may be chosen
  simultaneously so that
\[
    F^{(R)}(\widetilde\varphi_{g}X,gA) = F^{(R)}(X,A)
\]
almost surely for every $g\in\Gamma$ and $A\in\mathfrak{A}$. The notation $F^{(R)} (X^{y},A)$ denotes the same
  version evaluated at the resampled field $X^{y}$. Set
\[
    \nabla_{y}^{(R)}(X,A) := F^{(R)}(X,A)-F^{(R)}(X^{y},A).
\]

Since \( \widetilde\varphi_{g^{-1}}(X^{gy}) = \bigl(\widetilde\varphi_{g^{-1}}X\bigr )^{y}, \) stationarity gives
\[
    \nabla_{gy}(X,A) = \nabla_{y}\bigl( \widehat\varphi_{g^{-1}}(X,X'), g^{-1}
    A \bigr ),
\]
and the same identity holds for $\nabla^{(R)}$.

For a sequence \((C_k)_{k\ge1}\) in \(\mathfrak A\), we write \(C_k\to_{\mathrm{loc}}\Gamma\) if, for every \(r\ge0\),
\( B_G(o,r)\subseteq C_k \) for all sufficiently large \(k\). Equivalently, every finite subset of \(\Gamma\) is
eventually contained in \(C_k\).

\begin{definition}[Stabilization of resampling differences]
    \label{def:stabilization-resampling-differences}
    We say that \(F\) stabilizes on \(W\subseteq V\) if, for each
    \(x\in W\), there exists a random variable \(\nabla_x(\infty)\)
    such that, for every sequence \((C_k)_{k\ge1}\) in
    \(\mathfrak A\) satisfying \(C_k\to_{\mathrm{loc}}\Gamma\),
    \[
        \nabla_x(X,C_k)
        \xrightarrow[k\to\infty]{\mathbb P}
        \nabla_x(\infty).
    \]
\end{definition}

\begin{definition}[Local conditional approximation]
    \label{def:local-conditional-approximation} We say that $F$ is approximable
    by local conditional functionals along $(A_{n})$ if
    \[
        \lim_{R\to\infty}\limsup_{n\to\infty}|A_{n}|^{-1}\operatorname{Var}\bigl
        ( F (X,A_{n})-F^{(R)}(X,A_{n}) \bigr) =0.
    \]
\end{definition}

By \cite[Corollary~5.7]{de2026topology}, the group $\Gamma$ contains a normal, left-orderable subgroup $H$ of finite
index $q$. Fix a left-invariant order $\le_{H}$ on $H$, and let \( Y=\{y_{1},\ldots,y_{q}\} \) be a set of
representatives of the right cosets of $H$ in $\Gamma$. Every $g\in\Gamma$ can be written uniquely as $g=hy_{i}$, with
$h\in H$. Define a total order $\le$ on $\Gamma$ by declaring \( hy_{i}\le h'y_{j} \) if $i<j$, or if $i=j$ and
$h\le_{H}h'$. This order need not be left-invariant under the full group $\Gamma$, but it is invariant under left
multiplication by elements of $H$.

For $x\in V$, set
\[
    \mathfrak{F}_{x}:=\{z\in V:z\le x\}, \qquad \mathcal{F}_{x}:=\sigma(X_{z}
    :z\in \mathfrak{F}_{x}).
\]

We use the following martingale central limit theorem; see \cite{mcleish1974dependent}.

\begin{theorem}[McLeish, 1974]
    \label{thm:mcleish-clt} Let $\{Z_{n,j}:n\ge1,\ 1\le j\le b_{n}\}$ be a martingale-difference
    array with respect to filtrations $(\mathcal{H}_{n,j})_{0\le j\le b_n}$.
    Assume that
    \begin{enumerate}
        \item $\sup_{n\ge1}\mathbb{E}\left[ \max_{1\le j\le b_n}|Z_{n,j}|^{2}
                      \right ] <\infty;$

        \item $\max_{1\le j\le b_n}|Z_{n,j}| \xrightarrow[n\to\infty]{\mathbb{P}}
                  0;$

        \item $\sum_{j=1}^{b_n}Z_{n,j}^{2}\xrightarrow[n\to\infty]{\mathbb{P}}
                  1.$
    \end{enumerate}
    Then
    \[
        \sum_{j=1}^{b_n}Z_{n,j}\xrightarrow[n\to\infty]{d}\mathcal{N}(0,1).
    \]
\end{theorem}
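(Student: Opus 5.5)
This statement is McLeish's theorem, which the paper cites from \cite{mcleish1974dependent} rather than proves. If I were to reprove it, I would follow McLeish's characteristic-function argument based on a product expansion of $e^{itS_n}$, where $S_n:=\sum_{j=1}^{b_n}Z_{n,j}$. Fix $t\in\mathbb{R}$. The goal is to show $\mathbb{E}[e^{itS_n}]\to e^{-t^{2}/2}$, which gives the conclusion by Lévy's continuity theorem.

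\emph{Step 1: truncation.} I first reduce to the case where the conditional-variance proxy is bounded. Let $J_n:=\min\{j:\sum_{i\le j}Z_{n,i}^{2}>2\}$, with $J_n:=b_n$ if no such $j$ exists, and set $Z'_{n,j}:=Z_{n,j}\mathbf 1_{\{j\le J_n\}}$.
\begin{itemize}
\item Since $\{j\le J_n\}=\{\sum_{i<j}Z_{n,i}^{2}\le 2\}$ is $\mathcal H_{n,j-1}$-measurable, the $Z'_{n,j}$ still form a martingale-difference array.
\item By hypothesis~3, $\mathbb P(J_n<b_n)\le \mathbb P(\sum_j Z_{n,j}^2>2)\to0$, so $\sum_j Z'_{n,j}$ and $S_n$ differ only on an event of vanishing probability.
\item Hypotheses 1--3 clearly persist for $Z'$.
\end{itemize}
Moreover $\sum_{j}Z'^{2}_{n,j}\le 2+\max_j Z_{n,j}^{2}$. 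From now on I drop the primes.

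\emph{Step 2: product decomposition.} I use the elementary identity $e^{ix}=(1+ix)\exp(-x^{2}/2+r(x))$ with $|r(x)|\le|x|^{3}$ for $|x|\le1$. Put
\[
T_n:=\prod_{j}(1+itZ_{n,j}),\qquad U_n:=\exp\Bigl(-\tfrac{t^{2}}{2}\sum_j Z_{n,j}^{2}+\sum_j r(tZ_{n,j})\Bigr),
\]
so that $e^{itS_n}=T_nU_n$ on the event $\{|t|\max_j|Z_{n,j}|\le1\}$. By hypothesis~2 this event has probability tending to one.

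On that event, $|\sum_j r(tZ_{n,j})|\le|t|^{3}\max_j|Z_{n,j}|\sum_jZ_{n,j}^{2}$. Hypotheses 2 and 3 make this bound tend to zero in probability. Hence, using hypothesis~3 once more, $U_n\to e^{-t^{2}/2}$ in probability.

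\emph{Step 3: uniform integrability of $T_n$ (main step).} This is where truncation and hypothesis~1 are used. I bound
\[
|T_n|^{2}=\prod_j(1+t^{2}Z_{n,j}^{2})\le \exp\Bigl(t^{2}\sum_{j<J_n}Z_{n,j}^{2}\Bigr)(1+t^{2}Z_{n,J_n}^{2})\le e^{2t^{2}}\bigl(1+t^{2}\max_jZ_{n,j}^{2}\bigr).
\]
By hypothesis~1, $\sup_n\mathbb E|T_n|^{2}<\infty$, so $(T_n)$ is uniformly integrable.

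Also $\mathbb E[T_n]=1$. To see this, condition successively on $\mathcal H_{n,b_n-1},\mathcal H_{n,b_n-2},\dots$ and use $\mathbb E[Z_{n,j}\mid\mathcal H_{n,j-1}]=0$. Each partial product $\prod_{i<j}(1+itZ_{n,i})$ is bounded, since the partial sums of squares are at most $2$ before time $J_n$, so the conditioning is legitimate.

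\emph{Step 4: conclusion.} Write
\[
\mathbb E[e^{itS_n}]=e^{-t^{2}/2}\,\mathbb E[T_n]+\mathbb E\bigl[e^{itS_n}-T_ne^{-t^{2}/2}\bigr].
\]
The first term equals $e^{-t^{2}/2}$. For the second, the integrand is uniformly integrable, because $|e^{itS_n}|\le1$ and $(T_n)$ is uniformly integrable. It also tends to zero in probability: on the high-probability event of Step~2 it equals $T_n(U_n-e^{-t^{2}/2})$, and $T_n$ is tight while $U_n-e^{-t^{2}/2}\to0$ in probability. Vitali's theorem then gives $\mathbb E[e^{itS_n}]\to e^{-t^{2}/2}$.

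The main obstacle is Step~3, namely controlling $T_n$ in $L^{2}$ without a boundedness assumption. This is exactly why the stopping-time truncation of Step~1 is introduced first.
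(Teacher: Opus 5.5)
Your proposal is correct and is essentially McLeish's own proof: the product expansion $e^{ix}=(1+ix)\exp(-x^2/2+r(x))$ with uniformly integrable $T_n$ (his Theorem~2.1), combined with the stopping-time truncation at partial quadratic variation $2$ (his Theorem~2.3); the paper simply cites this result without reproving it. One small wording fix: for $j>J_n$ the partial product $\prod_{i<j}(1+itZ_{n,i})$ need not be bounded, but what the conditioning argument needs is that $\mathbf{1}_{\{j\le J_n\}}\prod_{i<j}(1+itZ_{n,i})$ is $\mathcal H_{n,j-1}$-measurable and bounded by $e^{t^2}$, which holds and suffices because the truncated $Z_{n,j}$ vanishes off $\{j\le J_n\}$.
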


\begin{theorem}[Central limit theorem for stabilizing stationary functionals] \label{thm:abstract-clt-functional} Let $G=\operatorname{Cay}(\Gamma,S)$
    be a Cayley graph of an infinite group of polynomial growth, and let $(A_{n})_{n\ge1}$ be an exhaustive two-sided F{\o}lner sequence. Set
    \[
        \mathfrak{A}:=\{gA_{n}:g\in\Gamma,\ n\ge1\}.
    \]
    Let $H\lhd\Gamma$ be the finite-index left-orderable subgroup fixed above, and let $Y$ be the corresponding set of
    right-coset representatives.

    Let $X=(X_{x})_{x\in V}$ be an i.i.d.\ field, and let $F$ be a stationary $\mathfrak{A}$-functional of $X$, with
    \[
        F(X,A)\in L^{2}(\mathbb{P}), \qquad A\in\mathfrak{A}.
    \]
    Assume that there exists $\gamma>2$ such that
    \[
        \sup_{R\ge0}\sup_{y\in Y}\sup_{A\in\mathfrak{A}}\mathbb{E}\left[ |\nabla
            _{y}^{(R)}(X,A)|^{\gamma}\right] <\infty.
    \]
    Assume also that $F$ stabilizes on $Y$ in the sense of Definition~\ref{def:stabilization-resampling-differences},
    and that, for every $R\ge0$, the truncated resampling differences stabilize on $Y$, with limits
    $\nabla_{y}^{(R)}(\infty)$ satisfying
    \[
        \nabla_{y}^{(R)}(\infty) \xrightarrow[R\to\infty]{L^2}\nabla_{y}(\infty
        ), \qquad y\in Y.
    \]
    Finally, assume that $F$ is approximable by local conditional functionals along $(A_{n})$.

    Then there exists $\sigma^{2}\ge0$ such that
    \[
        \frac{1}{|A_{n}|}\operatorname{Var}\bigl(F(X,A_{n})\bigr) \longrightarrow
        \sigma^{2},
    \]
    and
    \[
        |A_{n}|^{-1/2}\bigl( F(X,A_{n})-\mathbb{E}[F(X,A_{n})] \bigr) \xrightarrow
        [ n\to\infty]{d}\mathcal{N}(0,\sigma^{2}).
    \]
    Moreover,
    \[
        \sigma^{2}= \frac{1}{|Y|}\sum_{y\in Y}\mathbb{E}\left[ \mathbb{E}\bigl
            [ \nabla_{y}(\infty)\mid\mathcal{F}_{y}\bigr]^{2}\right].
    \]
\end{theorem}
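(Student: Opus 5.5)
First we reduce to the truncated functionals \(F^{(R)}\). Write \(\bar F_n:=F(X,A_n)-\mathbb E F(X,A_n)\) and \(\bar F^{(R)}_n\) for the centred version of \(F^{(R)}(X,A_n)\). By the local conditional approximation, \(\lim_R\limsup_n |A_n|^{-1}\operatorname{Var}(\bar F_n-\bar F^{(R)}_n)=0\). It therefore suffices to prove two things for each fixed \(R\): a CLT for \(|A_n|^{-1/2}\bar F^{(R)}_n\) with a limiting variance \(\sigma_R^2\), and the convergence \(\sigma_R^2\to\sigma^2\). A standard approximation lemma (Billingsley, Theorem 3.2) then transfers both the variance limit and the CLT to \(F\).

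For fixed \(R\), \(F^{(R)}(X,A_n)\) depends only on the finitely many coordinates in \(A_n^{R}:=\{z:d_G(z,A_n)\le R\}\). We enumerate these coordinates increasingly in the order \(\le\), say \(z_1<\dots<z_{b_n}\), and form the Doob martingale differences
\[
D_{n,j}:=\mathbb E[F^{(R)}(X,A_n)\mid\mathcal F_{z_j}]-\mathbb E[F^{(R)}(X,A_n)\mid\mathcal F_{z_{j-1}}].
\]
Coordinates outside \(A_n^{R}\) do not affect the conditional expectations, so these are martingale differences summing to \(\bar F^{(R)}_n\). The usual resampling identity gives \(D_{n,j}=\mathbb E[\nabla^{(R)}_{z_j}(X,A_n)\mid\mathcal F_{z_j}]\), where the expectation integrates out both \(X'\) and the coordinates above \(z_j\). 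Write \(z_j=hy\) with \(h\in H\), \(y\in Y\). The order is \(H\)-left-invariant, so \(\widetilde\varphi_{h^{-1}}\) maps \(\mathcal F_{z_j}\) to \(\mathcal F_{y}\). Combined with stationarity, this gives \(D_{n,j}\overset{d}{=}\mathbb E[\nabla_y^{(R)}(X,h^{-1}A_n)\mid\mathcal F_y]\), with the field shifted.

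We now verify the three conditions of Theorem~\ref{thm:mcleish-clt} for \(Z_{n,j}:=D_{n,j}/|A_n|^{1/2}\), after normalising by \(\sigma_R\) if \(\sigma_R>0\); the case \(\sigma_R=0\) is trivial. The uniform \(\gamma\)-moment bound with \(\gamma>2\), together with \(b_n\le C|A_n|\) (a consequence of polynomial growth and the Følner property), yields conditions (1) and (2):
\[
\mathbb E\max_j|Z_{n,j}|^2\le \bigl(\textstyle\sum_j\mathbb E|Z_{n,j}|^\gamma\bigr)^{2/\gamma}\le C|A_n|^{(1-\gamma/2)\cdot 2/\gamma}\to0.
\]
Condition (3) is the core of the argument. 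Almost every point \(z=hy\) of \(A_n^R\) is deep inside \(A_n\), because the Følner property makes boundary layers of width \(r\) negligible. For such points \(h^{-1}A_n\) contains a large ball, so \(C=h^{-1}A_n\to_{\mathrm{loc}}\Gamma\) along these indices. Stabilization of \(\nabla^{(R)}_y\), together with uniform integrability from the \(\gamma\)-moments, then gives
\[
\mathbb E\bigl[\nabla^{(R)}_y(X,h^{-1}A_n)\mid\mathcal F_y\bigr]\to\mathbb E\bigl[\nabla^{(R)}_y(\infty)\mid\mathcal F_y\bigr]\quad\text{in }L^2,
\]
uniformly over deep points. This lets us replace \(\sum_jD_{n,j}^2\) by \(\sum_{z=hy\in A_n}\widetilde\varphi_h\bigl(\mathbb E[\nabla^{(R)}_y(\infty)\mid\mathcal F_y]^2\bigr)\) up to an \(L^1\) error that is \(o(|A_n|)\). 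This replacement is the main obstacle: it needs the stabilisation to be uniform over the varying windows \(h^{-1}A_n\), which I would obtain by a contradiction or subsequence argument from the "every sequence" formulation of Definition~\ref{def:stabilization-resampling-differences}.

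The replaced sum is an ergodic average over the \(H\)-translates in each coset. Each coset \(Hy\cap A_n\) carries a \(1/|Y|\) fraction of \(A_n\) asymptotically, since \(H\) has finite index and \(A_n\) is two-sided Følner. The \(L^1\) mean ergodic theorem for the measure-preserving action of the amenable group \(H\) on the i.i.d.\ product space then applies. The \(H\)-sets \(\{h:hy\in A_n\}\) are Følner in \(H\), and the action is mixing hence ergodic. We conclude
\[
|A_n|^{-1}\sum_j D_{n,j}^2\xrightarrow{\mathbb P}\sigma_R^2:=\frac1{|Y|}\sum_{y\in Y}\mathbb E\bigl[\mathbb E[\nabla^{(R)}_y(\infty)\mid\mathcal F_y]^2\bigr],
\]
and orthogonality of the martingale differences gives \(|A_n|^{-1}\operatorname{Var}F^{(R)}(X,A_n)\to\sigma_R^2\). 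Finally, the hypothesis \(\nabla^{(R)}_y(\infty)\to\nabla_y(\infty)\) in \(L^2\) and the contractivity of conditional expectation give \(\sigma_R^2\to\sigma^2\) as stated, which completes the reduction of the first paragraph.
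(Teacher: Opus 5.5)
Your proposal is correct and follows essentially the same route as the paper: truncation to $F^{(R)}$, then a Doob martingale along the coset-based order with $D_{n,j}$ identified as a conditioned resampling difference, then McLeish's theorem, where the quadratic variation is handled by interior stabilization (made uniform by the same contradiction argument) together with the mean ergodic theorem on each coset of $H$, and finally $R\to\infty$ via the converging-together lemma. The differences are minor: conditioning directly on the full past $\mathcal{F}_{z_j}$, which is legitimate by independence since $F^{(R)}(X,A_n)$ depends only on coordinates in $A_n^{+R}$, spares you the paper's separate approximation of the finite past $\sigma$-algebras $\mathcal{K}_k$ by $\mathcal{F}_y$, and your bound $\mathbb{E}\max_j|Z_{n,j}|^2\le(\sum_j\mathbb{E}|Z_{n,j}|^\gamma)^{2/\gamma}$ settles McLeish's first two conditions at once.
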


\begin{proof}
    Fix $R\ge0$, and set
    \[
        B_{n,R}:=A_{n}^{+R}:= \{x\in V:d_{G}(x,A_{n})\le R\}, \qquad b_{n,R}:
        =|B_{n,R}|,
    \]
    and \( F_{n,R}:=F^{(R)}(X,A_{n}). \) The random variable $F_{n,R}$ is measurable with respect to the coordinates
    indexed by $B_{n,R}$. Since $(A_{n})$ is two-sided F{\o}lner,
    \[
        \frac{|B_{n,R}\setminus A_{n}|}{|A_{n}|}\longrightarrow0, \qquad \frac{b_{n,R}}{|A_{n}|}
        \longrightarrow1.
    \]

    Order the elements of $B_{n,R}$ according to the total order fixed above: \( x_{n,1}<x_{n,2}<\cdots<x_{n,b_{n,R}}.
    \) Set
    \[
        \mathcal{H}_{n,j}:= \sigma(X_{x_{n,i}}:1\le i\le j), \qquad 0\le j\le
        b_{n,R},
    \]
    where $\mathcal{H}_{n,0}$ is the trivial $\sigma$-algebra, and define
    \[
        D_{n,j}^{(R)}:= \mathbb{E}[F_{n,R}\mid\mathcal{H}_{n,j}] - \mathbb{E}
        [F_{n,R}\mid\mathcal{H}_{n,j-1}].
    \]
    Then
    \[
        F_{n,R}-\mathbb{E}[F_{n,R}] = \sum_{j=1}^{b_{n,R}}D_{n,j}^{(R)}
    \]
    and
    \[
        \operatorname{Var}(F_{n,R}) = \sum_{j=1}^{b_{n,R}}\mathbb{E}\left[(D_{n,j}
                ^{(R)})^{2}\right].
    \]

    Let $x=x_{n,j}$. Since the resampled coordinate $X_{x}'$ has the same law as $X_{x}$ and is independent of
    $\mathcal{H}_{n,j}$,
    \[
        \mathbb{E}\left[ F^{(R)}(X^{x},A_{n}) \,\middle|\, \mathcal{H}_{n,j}\right
        ] = \mathbb{E}\left[ F^{(R)}(X,A_{n}) \,\middle|\, \mathcal{H}_{n,j-1}
            \right] .
    \]
    Consequently,
    \begin{equation}
        D_{n,j}^{(R)}= \mathbb{E}\left[ \nabla_{x}^{(R)}(X,A_{n}) \,\middle|\,
            \mathcal{H}_{n,j}\right]. \label{eq:clt-martingale-resampling}
    \end{equation}

    Set
    \[
        C_{\gamma}:= \sup_{R\ge0}\sup_{y\in Y}\sup_{A\in\mathfrak{A}}\mathbb{E}
        \left [ |\nabla_{y}^{(R)}(X,A)|^{\gamma}\right].
    \]
    If $x=hy$, with $h\in H$ and $y\in Y$, stationarity gives
    \begin{equation}
        \nabla_{hy}^{(R)}(X,A) = \nabla_{y}^{(R)}\bigl( \widehat\varphi_{h^{-1}}
            (X, X'), h^{-1}A \bigr) \qquad\text{almost surely}. \label{eq:clt-translated-impact}
    \end{equation}
    Since $\widehat\varphi_{h^{-1}}$ preserves $\mathbb{P}$,
    \[
        \mathbb{E}\left[ |\nabla_{x}^{(R)}(X,A)|^{\gamma}\right] \le C_{\gamma}
        .
    \]
    Conditional Jensen's inequality and \eqref{eq:clt-martingale-resampling} therefore give
    \begin{equation}
        \mathbb{E}\left[ |D_{n,j}^{(R)}|^{\gamma}\right] \le C_{\gamma}. \label{eq:clt-difference-moment-bound}
    \end{equation}

    The two maximal conditions of Theorem~\ref{thm:mcleish-clt} follow from \eqref{eq:clt-difference-moment-bound}:
    \[
        \begin{aligned}
            \mathbb{E}\left[ \max_{1\le j\le b_{n,R}}\frac{|D_{n,j}^{(R)}|^2}{|A_n|}\right] & \le \frac{1}{|A_n|}\sum_{j=1}^{b_{n,R}}\mathbb{E}\left[ |D_{n,j}^{(R)}|^{2}\right] \\
                                                                                            & \le C_{\gamma}^{2/\gamma}\frac{b_{n,R}}{|A_n|},
        \end{aligned}
    \]
    which is uniformly bounded in $n$.

    Moreover, for every $\varepsilon>0$,
    \[
        \begin{aligned}
             & \mathbb{P}\left( \max_{1\le j\le b_{n,R}}|A_{n}|^{-1/2}|D_{n,j}^{(R)}| >\varepsilon \right)                                \\
             & \qquad\le \frac{1}{\varepsilon^\gamma|A_n|^{\gamma/2}}\sum_{j=1}^{b_{n,R}}\mathbb{E}\left[ |D_{n,j}^{(R)}|^{\gamma}\right] \\
             & \qquad\le \frac{C_\gamma b_{n,R}}{\varepsilon^\gamma|A_n|^{\gamma/2}}\longrightarrow0,
        \end{aligned}
    \]
    since $\gamma>2$.

    We next identify the limit of the sum of squared martingale differences. For $x\in B_{n,R}$, define
    \[
        \widehat D_{n,R}(x) := \mathbb{E}\left[ \nabla_{x}^{(R)}(X,A_{n}) \,\middle
            |\, \sigma(X_{z}:z\in B_{n,R},\ z\le x) \right].
    \]
    Then
    \begin{equation}
        D_{n,j}^{(R)}
        =
        \widehat D_{n,R}(x_{n,j}).
        \label{eq:clt-finite-martingale-field}
    \end{equation}

    For \(y\in Y\), set
    \[
        d_y^{(R)}
        :=
        \mathbb E\left[
            \nabla_y^{(R)}(\infty)
            \,\middle|\,
            \mathcal F_y
            \right].
    \]
    If \(x=hy\), with \(h\in H\) and \(y\in Y\), define
    \begin{equation}
        d_x^{(R)}
        :=
        d_y^{(R)}\circ\widetilde\varphi_{h^{-1}}.
        \label{eq:clt-limit-field}
    \end{equation}
    The invariance of the order under left multiplication by \(H\) gives
    \begin{equation}
        h^{-1}\mathfrak F_{hy}
        =
        \mathfrak F_y.
        \label{eq:clt-past-equivariance}
    \end{equation}

    The argument has two steps. First, we show that, at vertices deep inside \(A_n\), the martingale difference is
    asymptotically equal in \(L^2\) to the limiting field \(d_x^{(R)}\). We then average \(d_x^{(R)\,2}\) over \(A_n\)
    by decomposing \(A_n\) into right cosets of \(H\) and applying the mean ergodic theorem on each coset.

    We claim that
    \begin{equation}
        \frac{1}{|A_{n}|}\sum_{x\in B_{n,R}}\left| \widehat D_{n,R}(x)^{2}-d_{x}
            ^{(R)\,2}\right| \xrightarrow[n\to\infty]{L^1}0. \label{eq:clt-interior-approximation}
    \end{equation}

    To prove the claim, consider sequences $n_{k}\to\infty$ and $x_{k}\in A_{n_k}$ such that
    \[
        d_{G}(x_{k},V\setminus A_{n_k}) \longrightarrow\infty.
    \]
    Since $Y$ is finite, after passing to a subsequence we may write $x_{k}=h _{k}y$ with the same $y\in Y$ for every
    $k$. Set $C_{k}:=h_{k}^{-1}A_{n_k}.$ Since $d_{G}(x_{k},V\setminus A_{n_k})\to\infty$ and left translations
    preserve graph distance, $C_{k}$ tends locally to $\Gamma$. By stabilization,
    \[
        \nabla_{y}^{(R)}(X,C_{k}) \xrightarrow[k\to\infty]{\mathbb{P}}\nabla_{y}
        ^{(R)}(\infty).
    \]
    The uniform $\gamma$-moment bound implies uniform integrability of the squares, and hence
    \begin{equation}
        \nabla_{y}^{(R)}(X,C_{k}) \xrightarrow[k\to\infty]{L^2}\nabla_{y}^{(R)}
        (\infty ). \label{eq:clt-impact-l2}
    \end{equation}

    Let \( \mathcal{K}_{k}:= \sigma\bigl( X_{z}:z\le y,\ z\in C_{k}^{+R}\bigr). \) Every finite subset of
    $\mathfrak{F}_{y}$ is eventually contained in $C_{k} ^{+R}$. Thus, for every $Z\in L^{2}(\mathbb{P})$,
    \begin{equation}
        \mathbb{E}[Z\mid\mathcal{K}_{k}] \xrightarrow[k\to\infty]{L^2}\mathbb{E}
        [Z\mid \mathcal{F}_{y}]. \label{eq:clt-past-approximation}
    \end{equation}
    Since $\mathcal{K}_{k}\subseteq\mathcal{F}_{y}$, we have
    \[
        \mathbb{E}[Z\mid\mathcal{K}_{k}] = \mathbb{E}\left[ \mathbb{E}[Z\mid\mathcal{F}
                _{y}] \,\middle|\, \mathcal{K}_{k}\right].
    \]
    If $\zeta$ is a cylinder random variable depending on finitely many coordinates of $\mathfrak{F}_{y}$, then $\zeta$
    is $\mathcal{K}_{k}$-measurable for all sufficiently large $k$, and
    \[
        \left\| \mathbb{E}[Z\mid\mathcal{K}_{k}] - \mathbb{E}[Z\mid\mathcal{F}
            _{y}] \right\|_{2}\le 2\left\| \mathbb{E}[Z\mid\mathcal{F}_{y}]-\zeta
        \right\|_{2}.
    \]
    The conclusion follows from the density of cylinder random variables in $L ^{2}(\mathcal{F}_{y})$.

    Combining \eqref{eq:clt-impact-l2}, \eqref{eq:clt-past-approximation}, and the $L^{2}$-contractivity of conditional
    expectation yields
    \[
        \mathbb{E}\left[ \nabla_{y}^{(R)}(X,C_{k}) \,\middle|\, \mathcal{K}_{k}
            \right ] \xrightarrow[k\to\infty]{L^2}d_{y}^{(R)}.
    \]
    By stationarity and \eqref{eq:clt-past-equivariance}, this implies
    \[
        \widehat D_{n_k,R}(x_{k})-d_{x_k}^{(R)}\xrightarrow[k\to\infty]{L^2}0
        .
    \]
    The uniform second-moment bounds therefore give
    \begin{equation}
        \mathbb{E}\left[ \left| \widehat D_{n_k,R}(x_{k})^{2}-d_{x_k}^{(R)\,2}
            \right | \right] \longrightarrow0. \label{eq:clt-interior-squares}
    \end{equation}

    For $L\ge0$, let
    \[
        A_{n}^{-L}:= \{x\in A_{n}:d_{G}(x,V\setminus A_{n})>L\}.
    \]
    The argument above implies
    \[
        \lim_{L\to\infty}\sup_{\substack{n\ge1\\x\in A_n^{-L}}}\mathbb{E}\left
        [ \left | \widehat D_{n,R}(x)^{2}-d_{x}^{(R)\,2}\right| \right] =0.
    \]
    Indeed, otherwise there would exist $L_{k}\to\infty$, $n_{k}\ge1$, and $x_{k}\in A_{n_k}^{-L_k}$ for which the
    expectations remain bounded away from zero. Since each $A_{n}$ is finite, necessarily $n_{k}\to\infty$,
    contradicting \eqref{eq:clt-interior-squares}.

    For each fixed $L$, the two-sided F{\o}lner property gives
    \[
        \frac{|A_{n}\setminus A_{n}^{-L}|}{|A_{n}|}\longrightarrow0, \qquad \frac{|B_{n,R}\setminus
            A_{n}|}{|A_{n}|}\longrightarrow0.
    \]
    Together with the uniform second-moment bounds, these estimates prove \eqref{eq:clt-interior-approximation}.

    It remains to average the limiting field over the cosets of $H$. For $1\le m\le q$, set
    \[
        H_{n,m}:= \{h\in H:hy_{m}\in A_{n}\}.
    \]
    We first record the asymptotic density of the cosets. If $Hy_{i}$ and $Hy_{j}$ are two right cosets, right
    multiplication by $y_{i}^{-1}y_{j}$ maps $Hy_{i}$ onto $Hy_{j}$. Since $(A_{n})$ is right F{\o}lner, the
    cardinalities of $A_{n}\cap Hy_{i}$ and $A_{n}\cap Hy_{j}$ differ by $o(|A_{n}|)$. Summing over the $q$ cosets
    gives
    \begin{equation}
        \frac{|H_{n,m}|}{|A_{n}|}= \frac{|A_{n}\cap Hy_{m}|}{|A_{n}|}\longrightarrow
        \frac{1}{q}. \label{eq:clt-coset-density}
    \end{equation}

    Moreover, for every $k\in H$,
    \[
        |kH_{n,m}\oplus H_{n,m}| \le |kA_{n}\oplus A_{n}|.
    \]
    Since $H\lhd\Gamma$, we also have
    \[
        |H_{n,m}k\oplus H_{n,m}| \le \left|A_{n}(y_{m}^{-1}ky_{m})\oplus
        A_{n}\right|.
    \]
    Together with \eqref{eq:clt-coset-density}, which gives $|H_{n,m}|\sim |A_{n}|/q$, these inequalities imply the
    left and right F{\o}lner properties inside $H$. Hence $(H_{n,m})_{n\ge1}$ is a two-sided F{\o}lner sequence in $H$.

    Under the identification $\Theta^{\Gamma}\cong(\Theta^{Y})^{H},$ the restriction of the product action to $H$ is a
    Bernoulli shift and is therefore ergodic. By \eqref{eq:clt-limit-field},
    \begin{equation}
        \frac{1}{|A_{n}|}\sum_{x\in A_n}d_{x}^{(R)\,2}= \sum_{m=1}^{q}\frac{|H_{n,m}|}{|A_{n}|}
        \frac{1}{|H_{n,m}|}\sum_{h\in H_{n,m}}d_{y_m}^{(R)\,2}\circ\widetilde
        \varphi_{h^{-1}}. \label{eq:clt-coset-decomposition}
    \end{equation}

    By passing to an almost surely convergent subsequence and applying Fatou's lemma,
    \[
        \mathbb{E}\left[ |\nabla_{y}^{(R)}(\infty)|^{\gamma}\right] \le C_{\gamma}
        .
    \]
    In particular, $d_{y}^{(R)\,2}\in L^{1}(\mathbb{P})$. For $M>0$, set
    \[
        W_{y,M}^{(R)}:= d_{y}^{(R)\,2}\wedge M.
    \]
    Then $W_{y,M}^{(R)}\in L^{2}(\mathbb{P})$. Since $(H_{n,m}^{-1})_{n\ge1}$ is a left F{\o}lner sequence, the
    unitary-representation form of the mean ergodic theorem for amenable groups \cite[Theorem~8.13]{manfred259ergodic},
    applied to $(H_{n,m}^{-1})_{n\ge1}$, gives
    \[
        \frac{1}{|H_{n,m}|}\sum_{h\in H_{n,m}}W_{y_m,M}^{(R)}\circ\widetilde\varphi
        _{h^{-1}}\xrightarrow[n\to\infty]{L^2}\mathbb{E}\left[W_{y_m,M}^{(R)}
            \right ].
    \]
    Since ergodic averages are contractions in $L^{1}$, letting $M\to\infty$ yields
    \[
        \frac{1}{|H_{n,m}|}\sum_{h\in H_{n,m}}d_{y_m}^{(R)\,2}\circ\widetilde
        \varphi_{h^{-1}}\xrightarrow[n\to\infty]{L^1}\mathbb{E}\left[d_{y_m}^{(R)\,2}
            \right ].
    \]
    Combining this with \eqref{eq:clt-coset-density} and \eqref{eq:clt-coset-decomposition}, we obtain
    \begin{equation}
        \frac{1}{|A_{n}|}\sum_{x\in A_n}d_{x}^{(R)\,2}\xrightarrow[n\to\infty
        ]{L^1}\frac{1}{|Y|}\sum_{y\in Y}\mathbb{E}\left[d_{y}^{(R)\,2}\right]
        . \label{eq:clt-limit-field-average}
    \end{equation}
    The same limit holds with $B_{n,R}$ in place of $A_{n}$, since
    \[
        |B_{n,R}\setminus A_{n}|=o(|A_{n}|).
    \]

    Combining \eqref{eq:clt-finite-martingale-field}, \eqref{eq:clt-interior-approximation}, and
    \eqref{eq:clt-limit-field-average} gives
    \begin{equation}
        \frac{1}{|A_{n}|}\sum_{j=1}^{b_{n,R}}(D_{n,j}^{(R)})^{2}\xrightarrow[
            n\to\infty ]{L^1}\sigma_{R}^{2}, \label{eq:clt-quadratic-variation}
    \end{equation}
    where
    \begin{equation}
        \sigma_{R}^{2}:= \frac{1}{|Y|}\sum_{y\in Y}\mathbb{E}\left[ \mathbb{E}
            \left [ \nabla_{y}^{(R)}(\infty) \,\middle|\, \mathcal{F}_{y}\right]^{2}
            \right] . \label{eq:clt-sigma-r}
    \end{equation}
    In particular, the convergence in \eqref{eq:clt-quadratic-variation} holds
    in probability. Taking expectations and using the orthogonality of the
    martingale differences also gives
    \begin{equation}
        \frac{1}{|A_{n}|}\operatorname{Var}(F_{n,R}) \longrightarrow \sigma_{R}
        ^{2}. \label{eq:clt-truncated-variance}
    \end{equation}

    If $\sigma_{R}^{2}>0$, apply Theorem~\ref{thm:mcleish-clt} to
    \[
        Z_{n,j}^{(R)}:= \frac{D_{n,j}^{(R)}}{\sigma_{R}|A_{n}|^{1/2}}.
    \]
    The maximal estimates and \eqref{eq:clt-quadratic-variation} give
    \begin{equation}
        |A_{n}|^{-1/2}\bigl( F_{n,R}-\mathbb{E}[F_{n,R}] \bigr) \xrightarrow[
            n\to\infty ]{d}\mathcal{N}(0,\sigma_{R}^{2}). \label{eq:clt-truncated-clt}
    \end{equation}
    If $\sigma_{R}^{2}=0$, the same conclusion follows from \eqref{eq:clt-truncated-variance},
    since the normalized centered variables then converge to zero in $L^{2}$.

    To remove the truncation, observe that
    \[
        \nabla_{y}^{(R)}(\infty) \xrightarrow[R\to\infty]{L^2}\nabla_{y}(\infty
        ).
    \]
    By the $L^{2}$-contractivity of conditional expectation, this implies
    \[
        \mathbb{E}\left[ \nabla_{y}^{(R)}(\infty) \,\middle|\, \mathcal{F}_{y}
            \right ] \xrightarrow[R\to\infty]{L^2}\mathbb{E}\left[ \nabla_{y}(\infty
            ) \,\middle |\, \mathcal{F}_{y}\right].
    \]
    Therefore,
    \begin{equation}
        \sigma_{R}^{2}\longrightarrow \sigma^{2}:= \frac{1}{|Y|}\sum_{y\in Y}
        \mathbb{E}\left[ \mathbb{E}\left[ \nabla_{y}(\infty) \,\middle|\, \mathcal{F}
                _{y}\right ]^{2}\right]. \label{eq:clt-sigma-limit}
    \end{equation}

    Finally, local conditional approximation (Definition \ref{def:local-conditional-approximation}) gives
    \begin{equation}
        \lim_{R\to\infty}\limsup_{n\to\infty}\mathbb{E}\left[ \left| \frac{ F(X,A_{n})-F_{n,R}-
                \mathbb{E}[F(X,A_{n})-F_{n,R}] }{|A_{n}|^{1/2}}\right|^{2}\right] =0.
        \label{eq:clt-local-approximation}
    \end{equation}
    Together with \eqref{eq:clt-truncated-variance} and \eqref{eq:clt-sigma-limit},
    the inequality
    \[
        \left| \sqrt{\frac{\operatorname{Var}(F(X,A_{n}))}{|A_{n}|}}- \sqrt{\frac{\operatorname{Var}(F_{n,R})}{|A_{n}|}}
        \right| \le \sqrt{ \frac{\operatorname{Var}(F(X,A_{n})-F_{n,R})}{|A_{n}|}
        }
    \]
    and \eqref{eq:clt-local-approximation} show that
    \[
        \frac{1}{|A_{n}|}\operatorname{Var}(F(X,A_{n})) \longrightarrow \sigma
        ^{2}.
    \]
    The converging-together theorem \cite[Theorem~3.2]{billingsley1999convergence}, together with
    \eqref{eq:clt-truncated-clt}--\eqref{eq:clt-local-approximation}, gives
    \[
        |A_{n}|^{-1/2}\bigl(F(X,A_{n})-\mathbb{E}[F(X,A_{n})]\bigr) \xrightarrow
        [n\to\infty]{d}\mathcal{N}(0,\sigma^{2}).
    \]
\end{proof}

\begin{remark}
    If $F(X,A)$ is measurable with respect to $\sigma(X_{x}:x\in A)$, then \( F^{(R)}(X,A)=F(X,A) \) for every $R\ge0$.
    Hence \( \nabla_{y}^{(R)}(X,A)=\nabla_{y}(X,A),\) \(\nabla_{y}^{(R)}(\infty )=\nabla_{y}(\infty). \) Thus the
    additional approximation step is unnecessary for local functionals, and the theorem reduces to Penrose's local
    stabilization framework in the present Cayley-graph setting; see \cite{penrose2001}.
\end{remark}

\subsection{Geometric Lemma}
\label{subsec:geometric-lemma}

We characterize linear growth of polynomial-growth Cayley graphs through the existence of uniformly bounded disjoint
cutsets. This characterization is the geometric input in the proof of extinction.

\begin{definition}[Cutset for a vertex]
    Let $G=(V,E)$ be an infinite connected graph and let $o\in V$. A set of edges $A\subset E$ is called a cutset for $o$ if, after removing the edges
    of $A$, the vertex $o$ lies in a finite connected component. Equivalently, every infinite simple path starting from $o$ crosses at least one edge
    of $A$.
\end{definition}

\begin{definition}[One-dimensional graph]
    Let $G=(V,E)$ be an infinite connected graph. We say that $G$ is one-dimensional 	if there exist a vertex $o\in V$ and a sequence $(A_{n})_{n\ge1}$ of
    pairwise disjoint cutsets for $o$ such that
    \(
    \sup_{n\ge1}|A_{n}|<\infty.
    \)
\end{definition}

\begin{lemma}
    \label{lem:linear-growth-one-dimensional} Let $G=\Cay(\Gamma,S)$ be the Cayley graph of an infinite finitely generated group $\Gamma$, and assume that
    $G$ has polynomial growth. Then $G$ has linear growth if and only if $G$ is one-dimensional.
\end{lemma}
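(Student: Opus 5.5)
We prove the two implications separately.

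\emph{Linear growth implies one-dimensional.} By the Bass--Guivarc'h bounds with \(D=1\), \(|B_G(o,r)|\le Cr\). A purely metric argument is the most robust route. Write \(S_k:=\{x:d_G(o,x)=k\}\) for the spheres. The edges between \(S_k\) and \(S_{k+1}\) form an edge set \(E_k\) whose removal leaves \(o\) in the finite component \(B_G(o,k)\). Hence \(E_k\) is a cutset for \(o\), and the sets \(E_k\) are pairwise disjoint for distinct \(k\). Moreover \(|E_k|\le\Delta|S_{k+1}|\).

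Since
\[
\sum_{k\le r}|S_k|=|B_G(o,r)|\le Cr,
\]
at least half of the indices \(k\le r\) satisfy \(|S_{k+1}|\le 2C+1\). Otherwise the sum over \(k\le r\) would exceed \(Cr\) for large \(r\). Consequently there are infinitely many \(k\) with \(|E_k|\le\Delta(2C+1)\). Taking these \(k\) gives a sequence of pairwise disjoint cutsets of uniformly bounded size, so \(G\) is one-dimensional.

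Alternatively, one could argue algebraically: \(\Gamma\) is virtually \(\mathbb Z\), hence quasi-isometric to \(\mathbb Z\), and one could pull back cutsets. The counting argument above avoids this and needs only the upper growth bound.

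\emph{One-dimensional implies linear growth.} Assume the cutsets \(A_n\) are pairwise disjoint with \(|A_n|\le M\), and suppose for contradiction that \(D\ge2\). The natural tool is an isoperimetric inequality. On a Cayley graph with \(|B_G(o,r)|\ge cr^D\), the Coulhon--Saloff-Coste inequality gives \(|\partial_E K|\ge c'|K|^{(D-1)/D}\) for every finite \(K\).

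Let \(K_n\) be the finite component of \(o\) after removing \(A_n\). Every edge of \(\partial_E K_n\) must belong to \(A_n\), so \(|\partial_E K_n|\le M\), and hence \(|K_n|\le (M/c')^{D/(D-1)}\) is uniformly bounded.

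On the other hand, because the \(A_n\) are disjoint and each edge set is locally finite, only finitely many \(A_n\) can meet the ball \(B_G(o,r)\), since that ball contains finitely many edges. Therefore, for large \(n\), the cutset \(A_n\) avoids all edges inside \(B_G(o,r)\). Then \(B_G(o,r)\) is connected to \(o\) without crossing \(A_n\), which gives \(B_G(o,r)\subseteq K_n\) and \(|K_n|\ge|B_G(o,r)|\to\infty\). This contradicts the uniform bound on \(|K_n|\). Hence \(D=1\).

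\emph{Main obstacle.} The delicate step is justifying the isoperimetric inequality \(|\partial_E K|\ge c'|K|^{(D-1)/D}\) for Cayley graphs of growth degree \(D\). I would cite Coulhon--Saloff-Coste, which gives \(|\partial K|\ge |K|/(2\phi(2|K|))\), where \(\phi(v)=\min\{r:|B(o,r)|\ge v\}\le (v/c)^{1/D}\). Combined with the lower growth bound, this yields the stated inequality.

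Actually weaker input suffices here. Any \(D\ge2\) gives \(|\partial K|\to\infty\) as \(|K|\to\infty\), and that is all the contradiction needs. So even the crude form \(|\partial K|\ge c|K|^{1/2}\) would do.
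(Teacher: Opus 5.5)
Your proof is correct. The converse direction (one-dimensional $\Rightarrow$ linear growth) is essentially the paper's argument: for $D\ge2$ the Coulhon--Saloff-Coste inequality bounds $|K_n|$ uniformly, while disjointness pushes the cutsets away from $o$, so that $K_n$ contains arbitrarily large balls.

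The forward direction, however, takes a genuinely different route. The paper uses that groups of linear growth are virtually cyclic and passes to a normal infinite cyclic core $C$. It then builds a height function $h$ whose increments along edges are at most $L$, and cuts at the thresholds $\pm3Ln$. This yields explicit, regularly spaced cutsets of size at most $2Lm|S|$. You instead use only the upper bound $|B_G(o,r)|\le Cr$. The edge sets $E_k$ between consecutive spheres are automatically pairwise disjoint cutsets with $|E_k|\le\Delta|S_{k+1}|$. The number $b$ of indices $k\le r$ with $|S_{k+1}|>2C+1$ satisfies $b(2C+1)<|B_G(o,r+1)|\le C(r+1)$, so infinitely many $E_k$ are uniformly bounded.

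Your argument avoids the structure theory of linear-growth groups altogether. It works for any connected bounded-degree graph with linear upper growth at a single vertex. What you give up is control over where the cutsets sit. That costs nothing here, since the extinction proof only needs infinitely many cutsets and passes to subsequences anyway. In fact your components are the balls $B_G(o,k)$, so the nesting conditions $V_m\subset V_{m+1}$ and $\partial_V^{\mathrm{in}}V_{m+1}\subset V\setminus V_m$ used in that proof hold automatically along increasing $k$.
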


\begin{proof}
    We prove the two implications separately.

    \noindent
    \emph{Linear growth implies one-dimensional.} Assume first that $G$ has linear growth. Then $\Gamma$ is virtually cyclic; see
    \cite[Chapter~I, Section~5]{Woess}. Thus $\Gamma$ contains a finite-index 	subgroup $H\cong\mathbb{Z}$. Let
    \[
        C:=\operatorname{Core}_{\Gamma}(H) = \bigcap_{\gamma\in\Gamma}\gamma
        H\gamma^{-1}.
    \]
    Since $C$ is the kernel of the action of $\Gamma$ on the finite coset space $\Gamma/H$, it is a normal finite-index
    subgroup of $\Gamma$. Moreover, $C\subseteq H\cong\mathbb{Z}$, and hence $C$ is infinite cyclic. We may therefore
    write \( C=\langle g\rangle\cong\mathbb{Z}. \)

    Fix a complete set $T=\{t_{1},\dots,t_{m}\}$ of right coset representatives for $C$ in $\Gamma$, chosen so that
    $e\in T$. Then every $x\in\Gamma$ can be written uniquely as \( x=g^{h(x)}t(x), \) with $h(x)\in\mathbb{Z}$ and
    $t(x)\in T$.

    For each $t\in T$ and $s\in S$, write $ts=g^{a(t,s)}u(t,s)$, where $a(t,s)\in\bbZ$ and $u(t,s)\in T$. Since $T$ and
    $S$ are finite, we may choose
    \[
        L:=\max\{1,|a(t,s)|:\ t\in T,\ s\in S\}<\infty.
    \]
    It follows that adjacent vertices have heights differing by at most $L$: $|h(xs)-h(x)|\le L$ for every $x\in\Gamma$
    and $s\in S$.

    For $n\ge1$, set $r_{n}:=3Ln$. Let $A_{n}$ be the set of edges whose endpoints lie on opposite sides of one of the
    two thresholds $r_{n}$ and $-r_{n}$. More precisely, $\{x,y\}\in A_{n}$ if either $h(x)<r_{n}\le h(y)$ or $h(y
    )<r_{n}\le h(x)$, or if either $h(x)\le -r_{n}<h(y)$ or $h(y)\le -r_{n}<h(x)$.

    The sets $A_{n}$ are uniformly bounded. If an edge crosses the positive threshold $r_{n}$, then one endpoint has
    height in $\{r_{n},\dots,r_{n}+L-1\}$; if it crosses the negative threshold $-r_{n}$, then one endpoint has height
    in $\{-r_{n}-L+1,\dots,-r_{n}\}$. Each level of $h$ has cardinality $m=|T |$, and the degree of $G$ is at most
    $|S|$. Hence \( |A_{n}|\le 2Lm|S|\) for all \(n\ge1.\) Moreover, no edge can cross two different thresholds, since
    every edge has $h$-diameter at most $L$, whereas distinct thresholds among the numbers $\pm 3Ln$ are separated by
    at least $3L$. Therefore the cutsets $A_{n}$ are pairwise disjoint.

    Each $A_{n}$ separates the identity from infinity. Set $W_{n}:=\{x\in\Gamma:\ |h(x)|<r_{n}\}$. This is a finite set
    containing $e$. Any path from $e$ to $\Gamma\setminus W_{n}$ must have a first vertex whose height is at least
    $r_{n}$ or at most $-r_{n}$. The preceding edge then crosses the corresponding threshold, and hence belongs to
    $A_{n}$. Thus every path from $e$ to $\Gamma\setminus W_{n}$ meets $A_{n}$. Consequently, in $G\setminus A_{n}$,
    the connected component of $e$ is contained in the finite set $W_{n}$. Hence $A_{n}$ is a cutset for $e$.

    Thus $(A_{n})$ is a sequence of pairwise disjoint cutsets of uniformly bounded size. Therefore $G$ is
    one-dimensional.

    \noindent
    \emph{One-dimensional implies linear growth.} Assume now that $G$ is one-dimensional.
    Then there exist a vertex $o\in V$ and pairwise disjoint finite cutsets $(A_{n})_{n\ge1}$ for $o$ such that, if $U_{n}$ denotes the connected component
    of $o$ in $G\setminus A_{n}$, then $U_{n}$ is finite for every $n$, and $\sup_{n}|A_{n}|\le M < \infty$.

    We show that the polynomial growth degree cannot be larger than one. Suppose for contradiction that $G$ has
    polynomial growth degree $D>1$. By the edge-isoperimetric inequality for finitely generated groups of polynomial
    growth, there exists $c>0$ such that every finite nonempty set $F\subset\Gamma$ satisfies
    \[
        |\partial_{E}F| \ge c|F|^{(D-1)/D},
    \]
    where $\partial_{E}F$ denotes the edge boundary of $F$; see, for instance, \cite{CoulhonSaloffCoste}.

    Every edge from $U_{n}$ to $V\setminus U_{n}$ must belong to $A_{n}$, because $U_{n}$ is the connected component of
    $o$ in $G\setminus A_{n}$. Hence $\partial_{E}U_{n}\subseteq A_{n}$, and therefore
    \[
        c|U_{n}|^{(D-1)/D}\le |\partial_{E}U_{n}| \le |A_{n}| \le M.
    \]
    Since $D>1$, this implies that the cardinalities $|U_{n}|$ are uniformly bounded.

    On the other hand, the cutsets move away from $o$. To see this, fix $R<\infty$. Since $G$ has bounded degree, only
    finitely many edges have an endpoint in $B_{G}(o,R)$. Because the sets $A_{n}$ are pairwise disjoint, only finitely
    many of them can meet this finite edge set. Hence $\operatorname{dist} (o,A_{n})\to\infty$, where the distance from
    $o$ to $A_{n}$ is the minimum distance from $o$ to an endpoint of an edge in $A_{n}$.

    Now let $r_{n}:=\operatorname{dist}(o,A_{n})$. If $x\in B_{G}(o,r_{n}-1)$, then any geodesic from $o$ to $x$ avoids
    $A_{n}$. Thus $B_{G}(o,r_{n}-1)\subseteq U_{n}$. Since $r_{n}\to\infty$ and $G$ is infinite, the balls
    $B_{G}(o,r_{n}-1)$ have cardinality tending to infinity. Consequently $|U_{n}|\to\infty$, contradicting the uniform
    bound obtained above.

    This contradiction rules out $D>1$, so $D=1$ and $G$ has linear growth.
\end{proof}

\section{Proofs of the Extinction and Survival Results}
\label{sec:proofs-extinction-survival}

\subsection{Extinction on Graphs of Linear Growth}
\label{subsec:proof-1d-extinction}

Throughout the proof, we use deferred revelation of the Harris construction: the Poisson processes indexed by the
ordered edges \((x,y)\), \(y\sim x\), are left unrevealed until \(x\) first becomes a spreader. Since their marks are
ineffective before that time, this does not alter the process.

\begin{proof}[Proof of Theorem~\ref{thm:linear-growth-extinction}]
    By Lemma~\ref{lem:linear-growth-one-dimensional}, $G$ is one-dimensional.
    Hence there exist $o\in V$ and pairwise disjoint cutsets $(A_{n})_{n\ge1}$ 	for $o$ such that
    \(
    \sup_{n\ge1}|A_{n}|\le K<\infty.
    \)
    Since $G$ has bounded degree, set
    \(
    \Delta:=\sup_{x\in V}\deg(x)<\infty.
    \)

    Let $I_{0}$ be the initial non-ignorant set and define
    \[
        I_{0}^{+}:= I_{0}\cup \{y\in V:\text{ there exists }x\in I_{0}\text{
            with }y\sim x\}.
    \]
    Choose a finite connected set $F_{0}\subset V$ containing $\{o\}\cup I_{0} ^{+}$.

    Passing to a subsequence of the cutsets and relabelling, choose $A_{1}$ disjoint from every edge incident to
    $F_{0}$. For each $m\ge1$, let $V_{m}$ be the finite connected component of $o$ in $G\setminus A_{m}$. We may then
    choose the remaining cutsets successively so that
    \[
        F_{0}\subset V_{1}, \qquad \partial_{V}^{\mathrm{in}}V_{1}\cap F_{0}=
        \varnothing , \qquad V_{m}\subset V_{m+1}, \qquad \partial_{V}^{\mathrm{in}}
        V_{m+1}\subset V\setminus V_{m}
    \]
    for every $m\ge1$, where
    \[
        \partial_{V}^{\mathrm{in}}V_{m}:= \{x\in V_{m}:\text{ there exists }y
        \notin V_{m}\text{ with }x\sim y\}
    \]
    is the inner vertex boundary of $V_{m}$; see Figure~\ref{fig:cutset-z23}. These choices are possible because the
    cutsets are pairwise disjoint and \(G\) has bounded degree, so only finitely many of them meet the edges incident
    to any given finite set. If no edge of \(A_{m+1}\) is incident to \(V_m\), every vertex of \(V_m\) remains
    connected to \(o\) in \(G\setminus A_{m+1}\), and hence \(V_m\subset V_{m+1}\). Moreover, any
    \(x\in\partial_V^{\mathrm{in}}V_{m+1}\) sends an edge of \(A_{m+1}\) to \(V\setminus V_{m+1}\), so \(x\notin V_m\).

    Since $V_{m}$ is the component of $o$ in $G\setminus A_{m}$, every edge from $V_{m}$ to $V\setminus V_{m}$ belongs
    to $A_{m}$. Therefore
    \[
        |\partial_{V}^{\mathrm{in}}V_{m}| \le |\partial_{E}V_{m}| \le |A_{m}|
        \le K.
    \]

    \begin{figure}[ht]
        \centering
        \begin{tikzpicture}[scale=0.75, every node/.style={font=\small}]
            \filldraw[fill=gray!30, draw=gray!65, line width=0.8pt]
            (-5.5,-0.46) rectangle (5.5,0.66);

            \foreach \x in {-8,-7,...,6}
                { \pgfmathtruncatemacro{\y}{\x+2} \draw[black] (\x,0) to[bend right=18] (\y,0); }

            \foreach \x in {-8,-7,...,5} { \pgfmathtruncatemacro{\y}{\x+3} \draw[black] (\x,0) to[bend left=28] (\y,0); }

            \draw[red, very thick] (-7,0) to[bend right=18] (-5,0);
            \draw[red, very thick] (-6,0) to[bend right=18] (-4,0);
            \draw[red, very thick] (4,0) to[bend right=18] (6,0);
            \draw[red, very thick] (5,0) to[bend right=18] (7,0);

            \draw[red, very thick] (-8,0) to[bend left=28] (-5,0);
            \draw[red, very thick] (-7,0) to[bend left=28] (-4,0);
            \draw[red, very thick] (-6,0) to[bend left=28] (-3,0);
            \draw[red, very thick] (3,0) to[bend left=28] (6,0);
            \draw[red, very thick] (4,0) to[bend left=28] (7,0);
            \draw[red, very thick] (5,0) to[bend left=28] (8,0);

            \foreach \x in {-8,-7,...,8}
                { \node[circle, fill=black, inner sep=1.4pt] (v\x) at (\x,0) {}; \node[below=8pt] at (\x,0) {$\x$}; }

            \node[circle, draw=black, very thick, inner sep=3pt] at (v0) {};
            \node[above=14pt] at (0,0) {$o$};

            \foreach \x in {-5,-4,-3,3,4,5}
                { \node[circle, fill=red, very thick, inner sep=2.2pt] at (v\x) {}; }

            \node at (0,1.35) {$V_{m}$};
            \node[red] at (7.15,2.05) {$A_{m}$};
            \node[red!90!black]
            at
            (4.75,1.30)
            {$\partial_{V}^{\mathrm{in}}V_{m}$};

            \draw[->, thick] (6.75,1.88) -- (5.62,0.82);
            \draw[->, thick] (4.45,0.95) -- (5.02,0.12);
        \end{tikzpicture}
        \caption{The geometric setting in the proof of Theorem~\ref{thm:linear-growth-extinction}
            for the Cayley graph of $\bbZ$ with generators $\{\pm2,\pm3\}$. The shaded region is the finite component $V_{m}$ of the origin after removing the
            cutset $A_{m}$, shown in red. The red vertices form the inner vertex boundary $\partial_{V}^{\mathrm{in}}V_{m}$. Any infection from
            $V_{m}$ to $V\setminus V_{m}$ must start from this boundary and cross an edge of $A_{m}$.}
        \label{fig:cutset-z23}
    \end{figure}

    Define
    \[
        \theta_{m}:= \inf\{ t\ge0: \text{ some }x\in V\setminus V_{m}\text{ satisfies
        }\eta_{t}(x)\ne0 \},
    \]
    and
    \[
        \beta_{m}:= \inf\left\{ t\ge0: \text{ some vertex of }\partial_{V}^{\mathrm{in}}
        V_{m}\text{ becomes a spreader}\right\}.
    \]
    Let $(\mathcal{T}_{t})_{t\ge0}$ be the natural filtration generated by the initial configuration and the graphical
    marks up to time $t$.

    We claim that there exists $p_{0}>0$, independent of $m$, such that
    \[
        \Prob(\theta_{m}=\infty\mid\mathcal{T}_{\beta_m}) \ge p_{0}\qquad \text{on
        }\{\beta_{m}<\infty\}.
    \]

    Fix $m$ and work on $\{\beta_{m}<\infty\}$. List chronologically the distinct vertices of
    $\partial_{V}^{\mathrm{in}}V_{m}$ that become spreaders before the rumor exits $V_{m}$. There are at most $K$ such
    vertices.

    If $x\in\partial_{V}^{\mathrm{in}}V_{m}$ becomes a spreader at time $s_{x} <\theta_{m}$, then it was infected by a
    parent $u\in V_{m}$, since no vertex outside $V_{m}$ is non-ignorant before $\theta_{m}$. From time $s_{x}$ onward,
    $u$ remains non-ignorant. Hence, while $x$ is a spreader, the clock $N_{\mathrm{stif}}^{x,u}$ is active at rate
    $\alpha$, whereas the infection clocks from $x$ to vertices outside $V_{m}$ have total rate at most $\Delta
    \lambda$. Therefore, conditionally on the past at time $s_{x}$, the probability that $x$ is stifled by its parent
    before spreading to the outside of $V_{m}$ is at least
    \[
        q:=\frac{\alpha}{\alpha+\Delta\lambda}.
    \]

    Perform these tests sequentially, beginning with the boundary spreader born at time $\beta_{m}$ and stopping if the
    rumor exits $V_{m}$ or no further boundary spreader is born. When a boundary vertex becomes a spreader, reveal its
    entire outgoing clock family. Since the tested vertices are distinct sources, the strong Markov property and
    independence of these families imply that, conditionally on all previously revealed information, each test succeeds
    with probability at least $q$. If the procedure stops after fewer than $K$ tests without an exit, regard the
    remaining tests as successes. Iterated conditioning then shows that the probability that all boundary spreaders are
    stopped before infecting outside is at least $q^{K}$. On this event the rumor cannot exit $V_{m}$, since the first
    infection from $V_{m}$ to $V\setminus V_{m}$ would have to be caused by a spreader in
    $\partial_{V}^{\mathrm{in}}V_{m}$. Thus the claim holds with
    \[
        p_{0}:= q^{K}= \left( \frac{\alpha}{\alpha+\Delta\lambda}\right)^{K}>
        0.
    \]

    We now iterate this estimate. Since $V_{m}\subset V_{m+1}$,
    \[
        \{\theta_{m+1}<\infty\} \subseteq \{\theta_{m}<\infty\}.
    \]
    Moreover, \( \partial_{V}^{\mathrm{in}}V_{m+1}\subset V\setminus V_{m}, \) so \( \{\beta_{m+1}<\infty\} \subseteq
    \{\theta_{m}<\infty\}. \) Any exit from $V_{m+1}$ must also be preceded by the appearance of a spreader on
    $\partial_{V}^{\mathrm{in}}V_{m+1}$. Therefore
    \[
        \Prob(\theta_{m+1}<\infty) \le (1-p_{0})\Prob(\beta_{m+1}<\infty) \le
        (1-p_{0})\Prob(\theta_{m}<\infty).
    \]
    By induction,
    \[
        \Prob(\theta_{r}<\infty) \le (1-p_{0})^{r-1}, \qquad r\ge1.
    \]

    The events $\{\theta_{r}<\infty\}$ are decreasing, and hence
    \[
        \Prob(\theta_{r}<\infty\text{ for all }r\ge1) = \lim_{r\to\infty}\Prob
        (\theta_{r}<\infty) = 0.
    \]
    Thus, almost surely, there exists a finite random index $M$ such that $\theta_{M}=\infty$. On this event no vertex
    outside the finite set $V_{M}$ ever becomes non-ignorant.

    By Proposition~\ref{prop:local-extinction} and the countability of \(V\), almost surely every vertex spends only a
    finite amount of time in the spreader state. Since no vertex outside the finite set \(V_M\) ever becomes
    non-ignorant, only finitely many spreaders appear. The maximum of their stifling times is therefore an almost
    surely finite time \(\tau\) such that
    \[
        \eta_t(x)\ne1
        \qquad\text{for every }x\in V\text{ and every }t\ge\tau.
    \]
\end{proof}

\subsection{Local Extinction}
\label{subsec:proof-local-extinction}

\begin{proof}[Proof of Proposition~\ref{prop:local-extinction}]
    Fix $x\in V$. Since the only possible transitions are $0\to1$ and $1\to2$, the vertex $x$ can become a spreader at most once.

    Once $x$ has a non-ignorant neighbor, it is stifled at rate at least $\alpha$. If, when it becomes a spreader, all
    its neighbors are ignorant, then it creates a non-ignorant neighbor in finite time almost surely, at total rate
    $\lambda\deg(x)>0$, and is then stifled in finite additional time.

    Thus \( L_{x}:=\sup\{t\ge0:\eta_{t}(x)=1\}, \) with $L_{x}=0$ if $x$ never becomes a spreader, is almost surely
    finite. For finite $K\subset V$, set $\tau_{K}:=\max_{x\in K}L_{x}$. Then $\tau_{K}<\infty$ almost surely, and no
    vertex of $K$ is a spreader after time $\tau_{K}$.
\end{proof}

\subsection{Subcritical Extinction via Branching-Process Domination}
\label{subsec:proof-subcritical-phase}

We prove Theorem~\ref{thm:MT-positive-subcritical} by comparing the transmission genealogy with a Galton--Watson
process. For random variables $U$ and $V$, we write $U\preceq V$ when $U$ is stochastically dominated by $V$.

\begin{figure}[ht]
    \centering
    \begin{tikzpicture}[scale=0.86, every node/.style={font=\small}]
        \node at (-3.2,3.0) {\textbf{Local mechanism}};
        \node[
            circle,
            draw=black,
            fill=gray!20,
            very thick,
            minimum size=7mm
        ] (u) at (-4.8,1.2) {$u$};
        \node[
            circle,
            draw=black,
            fill=gray!20,
            very thick,
            minimum size=7mm
        ] (v) at (-2.2,1.2) {$v$};
        \node[below=6pt] at (-4.8,1.0) {parent};
        \node[below=6pt] at (-2.2,1.0) {child};

        \draw[->, very thick] (-4.35,1.2) -- (-2.65,1.2);
        \node[above] at (-3.5,1.7) {transmission to $v$};
        \node[align=center] at (-4.6,-0.55) {$u$ remains\\ non-ignorant};

        \draw[->, thick, red] (-2.6,0.68) to[bend right=25] (-4.2,0.68);
        \node[red, align=center]
        at
        (-3.35,0.1)
        {after $j$ children:\\ total rate $\ge(j+1)\alpha$};

        \node[circle, draw=black, minimum size=4.5mm] (x1) at (-0.8,2.0) {};
        \node[circle, draw=black, minimum size=4.5mm]
        (x2)
        at
        (-0.55,0.55)
        {};
        \node[circle, draw=black, minimum size=4.5mm]
        (x3)
        at
        (-0.45,1.45)
        {};

        \draw[->, thick, blue] (-1.85,1.42) -- (-1.0,1.88);
        \draw[->, thick, blue] (-1.78,1.05) -- (-0.78,0.63);
        \draw[->, thick, blue] (-1.75,1.23) -- (-0.7,1.41);

        \node[blue, align=center]
        at
        (1.0,-0.35)
        {after $j$ children:\\ child-producing transmissions\\ total rate $\le \lambda(\Delta-1-j)$};

        \node at (-2.2,-2.0) {$C_{v}\preceq Z_{\Delta}$};

        \draw[dashed] (3.1,-2.6) -- (3.1,3.25);

        \node at (6.8,3.0) {\textbf{Genealogy domination}};
        \node[
            circle,
            draw=black,
            fill=gray!20,
            very thick,
            minimum size=7mm
        ] (r) at (6.8,2.0) {$v_{0}$};

        \node[circle, draw=black, fill=gray!20, minimum size=6mm]
        (a1)
        at
        (5.3,0.9)
        {};
        \node[circle, draw=black, fill=gray!20, minimum size=6mm]
        (a2)
        at
        (6.8,0.9)
        {};
        \node[circle, draw=black, fill=gray!20, minimum size=6mm]
        (a3)
        at
        (8.3,0.9)
        {};

        \node[circle, draw=black, fill=gray!20, minimum size=6mm]
        (b1)
        at
        (4.6,-0.2)
        {};
        \node[circle, draw=black, fill=gray!20, minimum size=6mm]
        (b2)
        at
        (5.7,-0.2)
        {};
        \node[circle, draw=black, fill=gray!20, minimum size=6mm]
        (b3)
        at
        (7.9,-0.2)
        {};
        \node[circle, draw=black, fill=gray!20, minimum size=6mm]
        (b4)
        at
        (9.0,-0.2)
        {};

        \draw[thick] (r) -- (a1);
        \draw[thick] (r) -- (a2);
        \draw[thick] (r) -- (a3);
        \draw[thick] (a1) -- (b1);
        \draw[thick] (a1) -- (b2);
        \draw[thick] (a3) -- (b3);
        \draw[thick] (a3) -- (b4);

        \draw[red, very thick] (r) -- (a1);
        \draw[red, very thick] (r) -- (a3);
        \draw[red, very thick] (a1) -- (b2);

        \node[red, align=center] at (8.8,1.9) {transmission\\ genealogy};
        \node at (4.25,2.35) {GW tree};

        \node[align=center]
        at
        (6.8,-1.2)
        {root: $1+Z_{\Delta}$\\ later generations: $Z_{\Delta}$};
    \end{tikzpicture}
    \caption{Galton--Watson domination in the proof of Theorem~\ref{thm:MT-positive-subcritical}.
        Left: the offspring of a non-initial spreader are dominated by $Z_{\Delta}$.
        Right: the transmission genealogy is coupled with a dominating
        Galton--Watson tree whose root has offspring distribution \(1+Z_\Delta\).}
    \label{fig:subcritical-branching-domination}
\end{figure}

\begin{proof}[Proof of Theorem~\ref{thm:MT-positive-subcritical}]
    We first consider the process started from a single spreader $v_{0}$. Let $S_{t}=\{x\in V:\eta_{t}(x)=1\}$. Since almost surely no two Poisson
    marks occur simultaneously, every non-initial spreader has a unique parent in the transmission genealogy. Let $Z_{\Delta}$ be the random variable
    supported on $\{0,\ldots,\Delta-1\}$ whose tail is given by
    \begin{equation}
        \Prob(Z_{\Delta}\ge k) = \prod_{j=0}^{k-1}\frac{\Delta-1-j}{\Delta-1-j+(j+1)\alpha/\lambda}
        , \qquad 1\le k\le\Delta-1. \label{eq:subcritical-offspring-tail}
    \end{equation}
    Then
    \[
        \E[Z_{\Delta}] = \sum_{k=1}^{\Delta-1}\Prob(Z_{\Delta}\ge k) = m_{\Delta}
        (\alpha/\lambda).
    \]

    Consider first a non-initial spreader $v$ (see Figure~\ref{fig:subcritical-branching-domination}), let $u$ be its
    parent, and let $C_{v}$ denote the number of children produced by $v$. From the birth time of $v$ onward, $u$
    remains non-ignorant. Let $\sigma_j$ be the time at which $v$ produces its $j$th child, with $\sigma_0$ equal to
    the birth time of $v$, and let $(\mathcal{T}_{t} )_{t\ge0}$ be the natural filtration.

    Fix $0\le j\le\Delta-2$ and work on $\{C_{v}\ge j\}$. Set
    \[
        A_{j}:=\{y\sim v:\eta_{\sigma_j}(y)=0\}, \qquad B_{j}:=\{y\sim v:\eta
        _{\sigma_j}(y)\ne0\}.
    \]
    Then $|A_{j}|\le\Delta-1-j$ and $|B_{j}|\ge j+1$. Since non-ignorance is absorbing, for $v$ to produce a further
    child an infection mark with source $v$ must occur on an edge $(v,y)$ with $y\in A_{j}$, whereas a stifling mark on
    any edge $(v,y)$ with $y\in B_{j}$ turns $v$ into a stifler. Define
    \[
        T_{j}^{\mathrm{inf}}:= \min_{y\in A_j}\inf\bigl\{ s>0: N_{\mathrm{inf}}
            ^{v,y}(\sigma_{j}+s) > N_{\mathrm{inf}}^{v,y}(\sigma_{j}) \bigr\},
    \]
    with the minimum over the empty set interpreted as infinity, and define $T _{j}^{\mathrm{stif}}$ analogously using
    $N_{\mathrm{stif}}^{v,y}$ and $B_{j}$. Then
    \[
        \{C_{v}\ge j+1\} \subseteq \{T_{j}^{\mathrm{inf}}<T_{j}^{\mathrm{stif}}
        \}.
    \]
    By the strong Markov property at $\sigma_{j}$, conditionally on $\mathcal{T} _{\sigma_j}$ the two waiting times are
    independent exponentials with rates $\lambda|A_{j}|$ and $\alpha|B_{j}|$. Consequently, on $\{C_{v}\ge j\}$,
    \[
        \Prob(C_{v}\ge j+1\mid\mathcal{T}_{\sigma_j}) \le \frac{\lambda|A_{j}|}{\lambda|A_{j}|+\alpha|B_{j}|}
        \le \frac{\Delta-1-j}{\Delta-1-j+(j+1)\alpha/\lambda}.
    \]
    Multiplying by \(\mathbf 1_{\{C_v\ge j\}}\) and taking expectations yields
    \[
        \Prob(C_{v}\ge j+1) \le \frac{\Delta-1-j}{\Delta-1-j+(j+1)\alpha/\lambda}
        \Prob(C_{v}\ge j).
    \]
    Iterating this inequality for $j=0,\ldots,k-1$ and using \eqref{eq:subcritical-offspring-tail} gives
    \[
        \Prob(C_{v}\ge k)\le\Prob(Z_{\Delta}\ge k), \qquad 1\le k\le\Delta-1,
    \]
    and therefore $C_{v}\preceq Z_{\Delta}$.

    Now consider the initial spreader $v_{0}$. If $v_{0}$ never produces a child, then $C_{v_0}=0$. Otherwise, after
    its first successful transmission, it has a permanently non-ignorant neighbor, and the preceding domination applies
    from that time onward. Thus \( C_{v_0}\preceq Z_{\Delta}^{(0)}:=1+Z_{\Delta}. \)

    We now dominate the transmission genealogy by a Galton--Watson process with a modified root: the root has offspring
    distribution $Z_{\Delta}^{(0)}$, whereas every later individual has offspring distribution $Z_{\Delta}$. More
    explicitly, we explore the rumor genealogy generation by generation using deferred revelation of the Harris
    construction. When a spreader \(v\) is reached, the portions of the Poisson processes indexed by \((v,z)\), \(z\sim
    v\), after the birth time of \(v\) are revealed for the first time. Conditional on the previously explored
    genealogy, these portions have their original independent Poisson laws. Consequently, the conditional law of
    $C_{v}$ is stochastically dominated by $Z_{\Delta}^{(0)}$ when $v=v_{0}$, and by $Z_{\Delta}$ otherwise. We may
    therefore couple $C_{v}$ with a fresh offspring variable $Y_{v}$, having the corresponding law, so that $C_{v}\le
    Y_{v}$, and assign the children of $v$ injectively to the offspring of the corresponding tree vertex. Iterating
    this construction along the exploration yields a coupling in which the Galton--Watson tree dominates the rumor
    genealogy.

    If $m_{\Delta}(\alpha/\lambda)<1$, the transmission genealogy from a finite initial seed $S_0$ is dominated by a
    forest of $|S_0|$ Galton--Watson trees that are subcritical after their roots. This forest has finite total
    population almost surely, so only finitely many spreaders are created.

    Every non-initial spreader has a permanently non-ignorant parent and is therefore stifled at rate at least
    $\alpha$. Each initial spreader either is stifled before its first transmission or makes such a transmission in
    finite time almost surely, after which it also has a permanently non-ignorant neighbor. Hence every spreader leaves
    the spreader state in finite time almost surely, and the process becomes extinct in finite time.
\end{proof}

\subsection{Survival via Site Percolation}
\label{subsec:survival-small-stifling}

We prove survival for sufficiently small stifling, together with a positive-density conclusion, by comparison with
supercritical Bernoulli site percolation. Identify $o$ with the identity of $\Gamma$, and write \(
\rho:=\dfrac{\alpha}{\lambda}. \) Let $p_{c}^{\mathrm{site}}(G)$ denote the critical probability for Bernoulli site
percolation on $G$.

Since the only possible transitions are
\[
    \text{ignorant}\longrightarrow\text{spreader}\longrightarrow\text{stifler},
\]
each vertex enters the spreader state at most once. Set
\[
    \tau_{x}:=\inf\{t\ge0:x\text{ is a spreader at time }t\},
\]
with $\tau_{x}=\infty$ if $x$ never becomes a spreader.

We use an equivalent graphical construction in which the Poisson processes with source $x$ are indexed by the time
elapsed since $x$ became a spreader. For every ordered edge $(x,y)$, let \(\mathcal{N}_{\mathrm{inf}}^{x,y},\)
\(\mathcal{N}_{\mathrm{stif}}^{x,y}\) be independent Poisson processes of respective rates $\lambda$ and $\alpha$, with
all these processes mutually independent. The Poisson processes with source $x$ are activated at time $\tau_x$; while
$x$ remains a spreader, a mark at source age $s$ is read at real time $\tau_x+s$. An infection mark on $(x,y)$ turns
$y$ into a spreader if $y$ is ignorant, whereas a stifling mark on $(x,y)$ turns $x$ into a stifler if $y$ is
non-ignorant.

We construct the process recursively by revealing source processes only as needed. Let $\sigma_1=0$, $x_1=o$, and let
\( \sigma_1<\sigma_2<\cdots \) be the successive times at which new sources are activated, with $x_1,x_2,\ldots$ the
corresponding vertices. If fewer than $n$ sources are activated, set $\sigma_n=\infty$, and put \(
\sigma_\infty:=\lim_{n\to\infty}\sigma_n. \) Reveal the entire outgoing source family of a vertex only when that vertex
first becomes a spreader. Suppose that $x_1,\ldots,x_n$ have been activated. The identity of the next activated source
is determined by the families revealed at stages at most $n$; in particular, for every $z\in V$, the event
$\{x_{n+1}=z\}$ is measurable with respect to the information revealed up to that stage. Hence, conditional on the
exploration so far, every unrevealed source family retains its original law and is independent of all previously
revealed families.

Since the good/bad status of $x$ depends only on its outgoing source family, the statuses revealed along the
exploration are independent Bernoulli variables with the required law.

For $t<\sigma_\infty$, let $N_t$ denote the number of sources activated by time $t$. Here $N_0=1$. When $N_{t-}=n$, at
most $n\Delta$ infection clocks can create a new source, each at rate $\lambda$. Hence new sources are activated at
total rate at most $\lambda\Delta n$.

Therefore $(N_t)_{t<\sigma_\infty}$ is dominated, up to $\sigma_\infty$, by a Yule process with per-particle birth rate
$\lambda\Delta$. Since the latter is nonexplosive, $\sigma_\infty=\infty$ almost surely. The resulting nonexplosive
process has the transition rates of the Maki--Thompson dynamics and hence the same law as the usual graphical
construction.

For every ordered edge $(x,y)$, put
\[
    T_{\mathrm{inf}}^{x,y}
    :=
    \inf\{s>0:\mathcal{N}_{\mathrm{inf}}^{x,y}(s)\ge1\},
    \qquad
    T_{\mathrm{stif}}^{x,y}
    :=
    \inf\{s>0:\mathcal{N}_{\mathrm{stif}}^{x,y}(s)\ge1\},
\]
with $T_{\mathrm{stif}}^{x,y}=\infty$ when $\alpha=0$. We call $x$ \emph{good} if
\begin{equation}
    \max_{y\sim x}T_{\mathrm{inf}}^{x,y}
    <
    \min_{y\sim x}T_{\mathrm{stif}}^{x,y}.
    \label{eq:good-vertex-small-alpha}
\end{equation}
The event that $x$ is good depends only on the Poisson processes with source
$x$. Since $G$ is regular, the field
\(
\bigl(\mathbf{1}_{\{x\text{ is good}\}}\bigr)_{x\in V}
\)
is i.i.d.\ Bernoulli.

Suppose that the process starts with $o$ as its only spreader and every other vertex ignorant. If a good vertex $x$
becomes a spreader at time $\tau_x$, then every neighbor of $x$ has entered the spreader state by time
\[
    \tau_x+\max_{y\sim x}T_{\mathrm{inf}}^{x,y}.
\]
Indeed, only a stifling mark whose source is $x$ can turn $x$ into a stifler, and, since $x$ is good, no such mark
  occurs before this time. Thus $x$ remains a spreader until it has produced an infection mark toward every neighbor.
  When the corresponding mark occurs, each neighbor either becomes a spreader or has already passed through the
  spreader state.

Set
\[
    U_x:=\max_{y\sim x}T_{\mathrm{inf}}^{x,y}.
\]
Then $x$ is good precisely when
\[
    U_x<\min_{y\sim x}T_{\mathrm{stif}}^{x,y}.
\]
The latter minimum is $\operatorname{Exp}(\alpha\Delta)$, independent of $U_x$, with the convention that it equals
  $+\infty$ when $\alpha=0$. Hence
\[
    \mathbb P(x\text{ is good})
    =
    \mathbb E\bigl[e^{-\alpha\Delta U_x}\bigr].
\]
By R\'enyi's representation of exponential order statistics,
\[
    U_x\stackrel{d}{=}\sum_{j=1}^{\Delta}G_j,
    \qquad
    G_j\sim\operatorname{Exp}(j\lambda)
\]
independently. Therefore
\begin{equation}
    \mathbb P(x\text{ is good})
    =
    \prod_{j=1}^{\Delta}
    \frac{j\lambda}{j\lambda+\alpha\Delta}
    =
    \prod_{j=1}^{\Delta}\frac{j}{j+\Delta\rho}
    =
    q_\Delta(\rho).
    \label{eq:good-vertex-probability}
\end{equation}
In particular, $q_\Delta(0)=1$.
Write
\[
    \mathcal{O}:=\{x\in V:x\text{ is good}\},
\]
and let $\mathcal{O}_o$ be the connected component of $o$ in the subgraph induced by $\mathcal{O}$, with
  $\mathcal{O}_o=\varnothing$ if $o\notin\mathcal{O}$.

\begin{proof}[Proof of Theorem~\ref{thm:MT-survival}]
    By \cite[Theorem~1.1]{PanagiotisSevero}, there exists a universal
    $\varepsilon_0>0$ such that
    \(
    p_c^{\mathrm{site}}(G)\le1-\varepsilon_0
    \)
    for every Cayley graph of superlinear growth. Set
    \[
        c_0:=\log\frac{1}{1-\varepsilon_0}.
    \]
    Using \eqref{eq:good-vertex-probability} and $\log(1+u)\le u$,
    \[
        \log\frac{1}{q_\Delta(\rho)}
        =
        \sum_{j=1}^{\Delta}
        \log\left(1+\frac{\Delta\rho}{j}\right)
        \le
        \Delta\rho\sum_{j=1}^{\Delta}\frac{1}{j}
        \le
        \Delta\rho(1+\log\Delta).
    \]
    Thus \eqref{eq:main-uniform-survival-condition} implies
    \[
        q_\Delta(\rho)
        >
        e^{-c_0}
        =
        1-\varepsilon_0
        \ge
        p_c^{\mathrm{site}}(G).
    \]

    For the remaining conclusions, we assume only that \(q:=q_\Delta(\rho)>p_c^{\mathrm{site}}(G).\) The set
    $\mathcal{O}$ has the law of Bernoulli site percolation with parameter $q$. Therefore
    \[
        \mathbb{P}(|\mathcal{O}_o|=\infty)
        =
        \theta_G(q)
        >
        0.
    \]

    We claim that
    \begin{equation}
        \mathcal{O}_o\subseteq\mathcal{A}_\infty.
        \label{eq:open-cluster-contained}
    \end{equation}
    Indeed, let $x\in\mathcal{O}_o$ and let
    \(
    o=x_0,x_1,\ldots,x_m=x
    \)
    be an open path, so that every $x_i$ is good. Since $o$ is initially a 	spreader, $x_0$ enters the spreader state; and if $x_i$ enters the spreader state, then the preceding observation shows that $x_{i+1}$ does as well. Induction proves
    \eqref{eq:open-cluster-contained}. If 	$o\notin\mathcal{O}$, there is nothing to prove.

    On $\{|\mathcal{O}_o|=\infty\}$, infinitely many vertices therefore enter the spreader state. If the spreader set
    became empty at some finite time, no new spreader could appear afterwards, while nonexplosion implies that only
    finitely many vertices could have entered the spreader state before that time. This is a contradiction. Hence the
    rumor survives globally on $\{|\mathcal{O}_o|=\infty\}$.

    Finally, suppose that $\Gamma$ has polynomial growth. Since $\mathcal{O}$ has law $\mathbb{P}_q$, all
    $\mathbb{P}_q$-almost-sure percolation statements below hold $\mathbb{P}$-almost surely for $\mathcal{O}$. The
    graph $G$ is amenable and, since $q>p_c^{\mathrm{site}}(G)$, an infinite cluster exists almost surely. It is unique
    by \cite[Theorem~7.6]{LyonsPeres}; denote it by $\mathcal {C}_\infty$. Thus, almost surely on
    $\{|\mathcal{O}_o|=\infty\}$,
    \[
        \mathcal {C}_\infty
        =
        \mathcal{O}_o
        \subseteq
        \mathcal{A}_\infty.
    \]

    Let $\Gamma$ act on percolation configurations by \( (\sigma_g\omega)(x):=\omega(g^{-1}x), \) and set \(
    f:=\mathbf{1}_{\{o\in\mathcal {C}_\infty\}}. \) The Bernoulli product action is ergodic. Since left translations
    are graph automorphisms and the infinite cluster is almost surely unique, \( \mathcal {C}_\infty(\sigma_g\omega) =
    g\,\mathcal {C}_\infty(\omega), \) which gives \( \mathbf{1}_{\{g\in\mathcal {C}_\infty\}}(\omega) =
    f(\sigma_{g^{-1}}\omega). \)

    Write $B_n:=B_G(o,n)$ and take
    \[
        U_j:=S\cup\{e\},
        \qquad j\ge0.
    \]
    Thus, with $K=K'=S\cup\{e\}$, the inclusions
    \[
        K\subset U_j\subset K'
    \]
    hold for every $j$, as required in \cite[Theorem~13]{Tessera}. Moreover,
    \[
        N_n:=U_0U_1\cdots U_n=B_{n+1},
        \qquad n\ge0.
    \]
    Therefore that theorem yields
    \[
        \frac{|\mathcal C_\infty\cap B_{n+1}|}{|B_{n+1}|}
        =
        \frac{1}{|B_{n+1}|}
        \sum_{g\in B_{n+1}}
        f(\sigma_{g^{-1}}\omega)
        \xrightarrow[n\to\infty]{}
        \mathbb E_q[f]
        =
        \theta_G(q)
        \qquad\text{almost surely.}
    \]
    Hence the same limit holds along $(B_n)_{n\ge1}$. On $\{|\mathcal O_o|=\infty\}$,
    \[
        \mathcal C_\infty=\mathcal O_o\subseteq\mathcal A_\infty,
    \]
    and therefore
    \[
        \liminf_{n\to\infty}
        \frac{|\mathcal A_\infty\cap B_n|}{|B_n|}
        \ge \theta_G(q)
        \qquad\text{almost surely.}
    \]
    Since
    \[
        \mathbb P(|\mathcal O_o|=\infty)=\theta_G(q),
    \]
    this proves \eqref{eq:main-positive-density}.
\end{proof}

\subsection{Surface-Order Active Propagation}
\label{subsec:surface-order-active-propagation}

We first record the geometric and percolation estimates needed in the proof of
Theorem~\ref{thm:surface-order-active-propagation}.

A finite edge set $C\subset E$ is called an \emph{edge cycle} if every vertex has even degree in the finite graph
$(V,C)$.

Following \cite[Definition~1]{GorskiProcaccia}, a graph $G$ is called \emph{$\Lambda$-simply connected}, for
$\Lambda\ge0$, if every finite edge cycle $C$ can be written as $C=C_{1}\oplus\cdots\oplus C_{m}$, where
$C_{1},\ldots,C_{m}$ are edge cycles satisfying $\operatorname{diam}_{G}(V(C_{i}))\le\Lambda$ for every $i$. The graph
is called \emph{coarsely simply connected} if it is $\Lambda$-simply connected for some finite $\Lambda$.

\begin{proposition}[Coarse simple connectivity]
    \label{prop:G-is-coarsely-simply-connected} Let $G=\operatorname{Cay}(\Gamma,S)$ be a Cayley graph of polynomial growth. Then $G$ is $\Lambda$-simply
    connected for some $\Lambda=\Lambda(G)\ge1$.
\end{proposition}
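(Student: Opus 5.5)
The plan is to use the algebraic fact that groups of polynomial growth are finitely presented. Once $\Gamma$ has a finite presentation, the edge cycles of the Cayley graph can be decomposed into translates of finitely many relator cycles. By Gromov's theorem, $\Gamma$ is virtually nilpotent. A finitely generated nilpotent group is finitely presented: it is polycyclic, and polycyclic groups are finitely presented. Finite presentability passes to finite extensions, so it passes from a finite-index subgroup to the whole group. It is also independent of the chosen finite generating set. Hence there is a finite set $\mathcal R$ of words in $S$ such that $\Gamma=\langle S\mid \mathcal R\rangle$. We may add to $\mathcal R$ the words $ss^{-1}$ for $s\in S$ and, if desired, reduce to relators of length at least $1$. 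Set $\Lambda:=\max\{1,\max_{r\in\mathcal R}|r|\}$.

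The main step is to show that every finite edge cycle $C$ is a mod-$2$ sum of cycles of diameter at most $\Lambda$. This happens in three stages.

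\emph{Stage 1.} Decompose $C$ into closed walks. Every vertex of $(V,C)$ has even degree, so each connected component of $(V,C)$ admits an Euler circuit. Consequently $C$ is the symmetric difference of the edge sets of finitely many closed walks; here the edge set of a closed walk means the set of edges it traverses an odd number of times.

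\emph{Stage 2.} Rewrite each closed walk using relators. Take a closed walk based at $g$ with label $w$, a word in $S$. Since $w=e$ in $\Gamma$, $w$ is freely equal to a product $\prod_{i} u_i r_i^{\pm1}u_i^{-1}$ with $r_i\in\mathcal R$. We translate this into parity language: the mod-$2$ edge set of the walk equals the symmetric difference of the mod-$2$ edge sets of the loops labelled $r_i$, each based at the vertex $g u_i$. Two facts justify this. First, free reduction $ss^{-1}\to\varnothing$ removes a back-and-forth traversal, which does not change parities. Second, the conjugating paths $u_i$ and $u_i^{-1}$ traverse the same edges twice, so they also cancel mod $2$.

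\emph{Stage 3.} Bound the diameter. Each relator loop has length $|r_i|\le\Lambda$, so its vertex set has diameter at most $\Lambda/2\le\Lambda$. Its mod-$2$ edge set is an edge cycle: every vertex visited by a closed walk has even total incidence, and this parity is preserved when edges traversed an even number of times are discarded. Empty sets can be dropped from the decomposition.

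The main obstacle is the group-theoretic input, namely that virtually nilpotent groups are finitely presented. I would cite it rather than prove it. The cleanest route is through polycyclic groups: every subgroup of a finitely generated nilpotent group is finitely generated, and a group with a finite subnormal series with cyclic factors is finitely presented by induction on the length of the series, using the fact that extensions of finitely presented groups are finitely presented. The finite-index extension step follows from the Reidemeister--Schreier argument, or again from the extension lemma applied to the normal core. As a backup, I would instead cite the standard equivalence between finite presentability and coarse simple connectivity of Cayley graphs; this equivalence is essentially the content of Stages 1–3, and Stages 1–3 are routine bookkeeping.
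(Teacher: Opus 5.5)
Your proposal is correct and follows the same route as the paper: Gromov's theorem, finite presentability of finitely generated nilpotent groups and of their finite extensions, and then the passage from a finite presentation to $\Lambda$-simple connectivity, with $\Lambda$ controlled by the relator lengths. The only difference is that the paper cites Dru\c{t}u--Kapovich for that last passage, whereas your Stages~1--3 carry it out directly for the given generating set $S$, in exactly the mod-$2$ (symmetric-difference) form required by the definition; this is a useful clarification, not a change of strategy.
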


\begin{proof}
    By Gromov's theorem, $\Gamma$ contains a finitely generated nilpotent subgroup of finite index. Such a subgroup is
    finitely presented \cite[Propositions~13.75 and~13.84]{DrutuKapovich}, and finite extensions of finitely presented
    groups are finitely presented. Hence $\Gamma$ is finitely presented. Its Cayley graphs are therefore coarsely
    simply connected \cite[Corollaries~9.5 and~9.36]{DrutuKapovich}; the latter citation also shows that the property
    is independent of the finite generating set, up to the value of $\Lambda$. Enlarging $\Lambda$ if necessary, we may
    assume that $\Lambda\ge1$.
\end{proof}

For a site-percolation configuration, an open path is a path all of whose vertices are open. Let $d_{\mathrm{ch}}(x,y)$
denote the minimum number of edges in an open path from $x$ to $y$, with $d_{\mathrm{ch}}(x,y)=\infty$ if no such path
exists.

\begin{proposition}[High-density chemical-distance estimate]
    \label{prop:site-chemical-distance} Let $G$ be an infinite connected graph 	of bounded degree that is $\Lambda$-simply connected for some $\Lambda\ge
        1$, and set
    \[
        N_{\Lambda}:=\sup_{z\in V}|B_{G}(z,\Lambda)|,
    \]
    so that $2\le N_{\Lambda}<\infty$. For Bernoulli site percolation with parameter
    \[
        p>1-(N_{\Lambda}-1)^{-1},
    \]
    there exist $K<\infty$ and $c,C>0$ such that, for all $x,y\in V$,
    \begin{equation}
        \mathbb{P}_{p}\bigl( x\longleftrightarrow y,\; d_{\mathrm{ch}}(x,y)>K
        d_{G}(x,y) \bigr) \le Ce^{-c d_G(x,y)}. \label{eq:site-chemical-distance}
    \end{equation}
\end{proposition}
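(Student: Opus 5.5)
The plan is a Peierls/blocking argument. The high-density threshold \(1-(N_\Lambda-1)^{-1}\) is exactly the one at which a counting argument over \(\Lambda\)-connected sets of closed vertices converges. Let \(\gamma\) be a geodesic from \(x\) to \(y\) in \(G\), of length \(n:=d_G(x,y)\). On the event \(\{x\leftrightarrow y\}\), I will build an open path by detouring around each closed vertex met by \(\gamma\). The detours come from the following consequence of \(\Lambda\)-simple connectivity (the graph analogue of the Timár / Gorski--Procaccia duality). If \(x\) and \(y\) are connected by an open path, and \(\mathcal K\) is a \(\Lambda\)-connected component of closed vertices, then the part of \(\gamma\) that runs through the \(\Lambda\)-neighbourhood of \(\mathcal K\) can be replaced by an open path. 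This path stays within the outer \(\Lambda\)-boundary of \(\mathcal K\), which has at most \(N_\Lambda|\mathcal K|\) vertices.

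The key steps are as follows.
\begin{enumerate}
\item Prove the detour lemma. Take the union of the finite closed \(\Lambda\)-cluster \(\mathcal K\) and the component structure around it. The edge boundary of the region it separates is an edge cycle. By \(\Lambda\)-simple connectivity, this cycle decomposes into cycles of diameter at most \(\Lambda\), and therefore the outer vertex boundary \(\partial^{\mathrm{out}}_\Lambda\mathcal K\) is connected. Since \(x\leftrightarrow y\) excludes the possibility that \(\mathcal K\) separates \(x\) from \(y\), the entry and exit points of \(\gamma\) near \(\mathcal K\) can be joined inside this open, connected boundary. Hence
\[
d_{\mathrm{ch}}(x,y)\le n+\sum_{\mathcal K\cap\gamma\neq\varnothing} N_\Lambda|\mathcal K|.
\]
\item Bound the cluster sizes. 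The number of \(\Lambda\)-connected sets of size \(k\) containing a given vertex is at most \((e(N_\Lambda-1))^k\). Its leading growth is governed by the lattice-animal constant of the graph with adjacency \(d_G\le\Lambda\), whose degree is at most \(N_\Lambda-1\); the sharper tree-counting bound is roughly \((N_\Lambda-1)^k\) times subexponential factors. Summing over disjoint clusters touching the \(n+1\) vertices of \(\gamma\) with total size \(m\), the probability that the total size of clusters meeting \(\gamma\) exceeds \(Kn\) is bounded by
\[
\sum_{m\ge Kn} \binom{n+m}{m}\bigl(c_\Lambda(1-p)\bigr)^{m}.
\]
\item Conclude. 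When \(c_\Lambda(1-p)<1\), this sum is at most \(Ce^{-cn}\) once \(K\) is large enough, which gives \eqref{eq:site-chemical-distance}.
\end{enumerate}

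The main obstacle is reconciling the constant in step 2 with the exact threshold \(1-(N_\Lambda-1)^{-1}\). Naive animal counting gives \(e(N_\Lambda-1)\), which is off by a factor of \(e\). My first attempt is a Bernoulli domination: for \(p>1-(N_\Lambda-1)^{-1}\), the closed \(\Lambda\)-cluster explored from a vertex is dominated by a subcritical Galton--Watson process. Each explored closed vertex has at most \(N_\Lambda-1\) unexplored \(\Lambda\)-neighbours, each closed with probability \(1-p\), so the offspring mean is at most \((N_\Lambda-1)(1-p)<1\). Subcritical clusters have exponential tails, but summing over the clusters met by \(\gamma\) requires a joint exponential moment. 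To get it, I will explore the clusters sequentially along \(\gamma\), revealing each cluster's vertices only once. This makes the total cluster size stochastically dominated by a sum of \(n+1\) i.i.d.\ subcritical total progenies with exponential moments, and a Chernoff bound then yields \(\mathbb P(\text{total}>Kn)\le Ce^{-cn}\) for \(K\) large. A secondary difficulty is the topological content of step 1, namely connectivity of the \(\Lambda\)-outer boundary of a finite set. Here I would follow \cite{GorskiProcaccia}, using edge cycles and the symmetric-difference decomposition.
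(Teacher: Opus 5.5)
Your proposal is correct and, once you drop the animal-counting attempt, it is essentially the paper's proof. The closed $G_\Lambda$-clusters meeting the geodesic are explored sequentially and dominated by $n+1$ i.i.d.\ subcritical Galton--Watson progenies with $\mathrm{Binomial}(N_\Lambda-1,1-p)$ offspring, a Chernoff bound controls their total size, and $\Lambda$-simple connectivity via \cite{GorskiProcaccia} gives an open detour whose extra length is at most $N_\Lambda$ times that total size.

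The one difference is in the detour step. The paper applies Gorski--Procaccia's Lemma~2.8 once, with forbidden set the union $F_\beta$ of all these clusters, and obtains a path in $(\beta\cup\bigcup_{z\in F_\beta}B_G(z,\Lambda))\setminus F_\beta$, a region that is automatically open. This avoids your cluster-by-cluster bookkeeping for interleaved clusters. Your heuristic for step~1 is also misstated: an edge boundary is a cut, not an edge cycle. The cycle one actually decomposes is the symmetric difference of an open path avoiding $\mathcal K$ with a path running through $\mathcal K$.
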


\begin{proof}
    The bond analogue is the first part of \cite[Theorem~2]{GorskiProcaccia}. We give a short proof for site
    percolation; its branching estimate yields the explicit threshold above.

    Let \(G_{\Lambda}\) be the graph with vertex set \(V\) in which two distinct vertices are adjacent whenever their
    \(G\)-distance is at most \(\Lambda\). Its maximum degree is at most \(N_{\Lambda}-1\). Fix \(x,y\in V\), set
    \(m:=d_G(x,y)\), and let \(\beta\) be a \(G\)-geodesic from \(x\) to \(y\). Call a vertex closed if it is not open.
    Let \(F_\beta\) be the union of those connected components of the subgraph of \(G_\Lambda\) induced by the closed
    vertices that intersect \(\beta\).

    Explore sequentially the closed $G_{\Lambda}$-components that meet $\beta$, starting from the closed vertices of
    $\beta$. There are at most $m+1$ initial vertices. Conditionally on the exploration so far, each explored vertex
    has at most $N_{\Lambda}-1$ unexplored $G_{\Lambda}$-neighbors, each closed with probability $1-p$. Hence
    $|F_\beta|$ is stochastically dominated by
    \[
        \sum_{i=1}^{m+1}\mathcal P_i,
    \]
    where the $\mathcal P_i$ are independent copies of the total progeny of a Galton--Watson process with offspring
    distribution
    \[
        Z\sim\operatorname{Binomial}(N_\Lambda-1,1-p).
    \]
    Since $\mathbb E[Z]<1$ and $Z$ is bounded, $\mathcal P_1$ has a finite exponential moment. A Chernoff bound
    therefore gives constants $A<\infty$ and $c_{\mathrm{ch}}>0$ such that
    \begin{equation}
        \mathbb{P}_{p}\bigl(|F_{\beta}|>A(m+1)\bigr)
        \le e^{-c_{\mathrm{ch}}(m+1)}.
        \label{eq:closed-obstacle-tail}
    \end{equation}

    \begin{figure}[ht]
        \centering
        \begin{tikzpicture}[
                scale=0.85,
                every node/.style={font=\small},
                open/.style={circle, draw=black!45, fill=white, inner sep=1.3pt},
                closed/.style={rectangle, draw=black, fill=black, inner sep=2.2pt},
                halo/.style={
                        draw=blue!55, densely dotted, line width=1pt,
                        fill=blue!7, rounded corners=11pt
                    }
            ]

            \path[halo] (1.4,-1.6) rectangle (4.6,1.6);
            \path[halo] (5.4,-1.6) rectangle (8.6,1.6);

            \foreach \i in {0,...,11}
            \foreach \j in {-2,...,2}
                { \draw[black!22, line width=0.6pt] (\i,\j) -- (\i+1,\j); }
            \foreach \i in {0,...,12}
            \foreach \j in {-2,...,1}
                { \draw[black!22, line width=0.6pt] (\i,\j) -- (\i,\j+1); }

            \draw[black!30, line width=6pt, line cap=round]
            (0,0) -- (12,0);

            \draw[red!80!black, line width=2pt, line join=round, line cap=round]
            (0,0) -- (2,0) -- (2,-1) -- (4,-1) -- (4,0) --
            (6,0) -- (6,1) -- (8,1) -- (8,0) -- (12,0);

            \foreach \i in {0,...,12}
            \foreach \j in {-2,...,2}
                { \node[open] at (\i,\j) {}; }

            \foreach \p in {(2,1),(3,1),(4,1),(3,0)}
                { \node[closed] at \p {}; }
            \foreach \p in {(7,0),(7,-1),(8,-1)}
                { \node[closed] at \p {}; }

            \foreach \p in {(10,-2),(11,-2)}
                { \node[closed] at \p {}; }

            \node[left=3pt]  at (0,0)  {$x$};
            \node[right=3pt] at (12,0) {$y$};

            \node at (4.95,2.65) {$F_\beta$};
            \draw[->, thick] (4.65,2.45) -- (3.55,1.25);
            \draw[->, thick] (5.25,2.45) -- (7.0,0.35);

            \node at (10.6,0.65) {$\beta$};
            \draw[->, thick] (10.6,0.47) -- (10.6,0.16);

            \node[red!80!black] at (6.7,2.35) {$\gamma'$};
            \draw[->, thick, red!80!black]
            (6.7,2.15) -- (6.85,1.15);

            \node[blue!60!black, align=center] at (2.3,-3.0)
            {$\displaystyle \bigcup_{z\in F_\beta} B_G(z,\Lambda)$};
            \draw[->, thick, blue!60!black]
            (2.3,-2.35) -- (2.55,-1.5);

            \node[align=center] at (10.9,-3.0)
            {closed $G_\Lambda$-component\\
                not intersecting $\beta$};
            \draw[->, thick]
            (10.9,-2.5) -- (10.6,-1.85);

        \end{tikzpicture}

        \caption[Chemical-distance estimate]{Chemical-distance estimate, drawn on
            $\mathbb Z^2$ with $\Lambda=2$. The geodesic $\beta$ encounters closed
            $G_\Lambda$-components forming $F_\beta$. Coarse simple connectivity
            yields an open detour $\gamma'$ (red) that avoids $F_\beta$ and is
            contained in
            $\beta\cup\bigcup_{z\in F_\beta}B_G(z,\Lambda)$.
            Thus its extra length is controlled by $|F_\beta|$. Closed components
            not intersecting $\beta$ play no role.}
        \label{fig:chemical-distance-detour}
    \end{figure}

    On $\{x\longleftrightarrow y\}$, choose an open path $\gamma$ from $x$ to $y$. Since $F_{\beta}$ consists of closed
    vertices, $\gamma\cap F_{\beta}=\varnothing$. On the event $\{|F_{\beta}|\le A(m+1) \}$, apply
    \cite[Lemma~2.8]{GorskiProcaccia} with forbidden vertex set $F_{\beta}$. It yields a path $\gamma'$ from $x$ to $y$
    contained in
    \[
        \left( \beta\cup \bigcup_{z\in F_\beta}B_{G}(z,\Lambda) \right) \setminus
        F_{\beta}.
    \]
    The geometric detour used here is illustrated in Figure~\ref{fig:chemical-distance-detour}.
    Every vertex in this region is open. Indeed, a closed vertex of $\beta$ belongs to $F_{\beta}$, while a closed
    vertex within distance $\Lambda$ of $F_{\beta}$ belongs to the same closed $G_{\Lambda}$-component and hence also
    to $F_{\beta}$.

    After loop erasure, $|\gamma'|\le m+1+N_{\Lambda}|F_{\beta}|$. Consequently, for $m\ge1$,
    \[
        d_{\mathrm{ch}}(x,y) \le (1+AN_{\Lambda})(m+1) \le 2(1+AN_{\Lambda})m
    \]
    on $\{|F_{\beta}|\le A(m+1)\}$. Taking $K:=2(1+AN_{\Lambda})$ and using \eqref{eq:closed-obstacle-tail} gives
    \eqref{eq:site-chemical-distance} for $m\ge1$; the case $x=y$ is immediate.
\end{proof}

\begin{proof}[Proof of Theorem~\ref{thm:surface-order-active-propagation}]
    Write $\rho:=\alpha/\lambda$. Fix $C_{G}<\infty$ such that
    \[
        |B_{G}(o,r)|\le C_{G}(1+r)^{D}, \qquad r\ge0,
    \]
    and let $v_{G}\in(0,\infty)$ be defined by
    \[
        \frac{|B_{G}(o,n)|}{n^{D}}\longrightarrow v_{G}.
    \]

    Throughout the proof, we use the graphical construction introduced above.

    For $x\in V$, set \(\Psi_x:=\min_{y\sim x}T_{\mathrm{stif}}^{x,y}, \) with $\Psi_x=\infty$ when $\alpha=0$. For
    $L,b>0$, call $x$ \emph{$(L,b)$-good} if
    \begin{equation}
        U_{x}\le L, \qquad \Psi_x\ge U_{x}+b. \label{eq:robust-good-vertex}
    \end{equation}
    The parameter of this i.i.d.\ Bernoulli field is
    \begin{equation}
        p_{L,b}= e^{-\alpha\Delta b}\mathbb{E}\left[ e^{-\alpha\Delta U_x}\mathbf{1}
            _{\{U_x\le L\}}\right]. \label{eq:robust-good-probability}
    \end{equation}
    Since this event depends only on the outgoing source family of $x$, the
    $(L,b)$-good-vertex field is i.i.d.\ Bernoulli.
    For fixed $b>0$, we have $p_{L,b}\uparrow e^{-\alpha\Delta b}q_{\Delta}(\rho)$ as $L\to\infty$. Since $q_{\Delta}(\rho)>\widetilde p_{G}$, we may first choose $b>0$
    sufficiently small and then $L<\infty$ sufficiently large that
    \[
        p_{L,b}> \widetilde p_{G}\ge p_{c}^{\mathrm{site}}(G).
    \]

    Let $\mathcal{O}^{L,b}$ be the set of $(L,b)$-good vertices and let $\mathcal C_o^{L,b}$ be the component of $o$ in
    the subgraph induced by $\mathcal{O} ^{L,b}$, with $\mathcal C_o^{L,b}:=\varnothing$ when
    $o\notin\mathcal{O}^{L,b}$. Set $\mathcal{I}_{L,b}:=\{|\mathcal C_o^{L,b} |=\infty\}$. Then
    $\mathbb{P}(\mathcal{I}_{L,b})=\theta_{G}(p_{L,b})>0$.

    Let $d_{\mathrm{ch}}$ denote chemical distance in $\mathcal{O}^{L,b}$. We claim that
    \begin{equation}
        \tau_{x}\le Ld_{\mathrm{ch}}(o,x), \qquad x\in\mathcal C_o^{L,b}. \label{eq:infection-time-via-chemical-distance}
    \end{equation}
    Indeed, on $\mathcal{I}_{L,b}$ we have $o\in\mathcal{O}^{L,b}$ and
    $\tau_{o}=0$. Let $o=x_{0},x_{1},\ldots,x_{m}=x$ be a path of $(L,b)$-good
    vertices. If $x_i$ becomes a spreader at time $\tau_{x_i}$, then, since $x_i$ is
    $(L,b)$-good, its infection mark toward $x_{i+1}$ occurs within $L$
    units of time and before any stifling mark with source $x_i$. Thus $x_i$
    is still a spreader when that infection mark occurs. At that time, $x_{i+1}$ either
    becomes a spreader or has already entered the spreader state, since an ignorant vertex can leave that state only by
    becoming a spreader. Hence $\tau_{x_{i+1}}\le\tau_{x_i}+L$, and induction proves
    \eqref{eq:infection-time-via-chemical-distance}.

    Fix $\varepsilon\in(0,1)$ and set
    \[
        r_{n}:=\lfloor(1-\varepsilon)n\rfloor, \qquad \mathcal{Q}_{n}^{\varepsilon}:= B_{G}(o,n)\setminus B_{G}(o,r_{n}).
    \]
    By the choice of \(L\) and \(b\) and \eqref{eq:main-active-percolation-threshold}, \( p_{L,b}>\widetilde
    p_G\ge1-(N_\Lambda-1)^{-1}. \) Hence Proposition~\ref{prop:site-chemical-distance} applies.

    Let $\mathcal{B}_{n}^{\varepsilon}$ be the event that there exists $x\in\mathcal{Q}_{n}^{\varepsilon}$ such that
    $o\longleftrightarrow x$ in $\mathcal{O}^{L,b}$ and $d_{\mathrm{ch}}(o,x)>Kd_{G}(o,x)$, where $K$ is the constant
    in \eqref{eq:site-chemical-distance}. Every $x\in\mathcal{Q} _{n}^{\varepsilon}$ satisfies $d_{G}(o,x)\ge
    r_{n}+1>(1-\varepsilon)n$, while polynomial growth gives $|\mathcal{Q}_{n}^{\varepsilon}|\le|B_{G}(o,n)|\le
    C_{G}(1+n)^{D}$. Thus, for some constant \(C'<\infty\), a union bound gives
    \[
        \sum_{n\ge1}\mathbb{P}(\mathcal{B}_{n}^{\varepsilon})
        \le C'\sum_{n\ge1}n^{D}e^{-c(1-\varepsilon)n}<\infty.
    \]
    By Borel--Cantelli, almost surely, for all sufficiently large $n$, $d_{\mathrm{ch}}(o,x)\le Kd_{G}(o,x)\le Kn$ for
    every $x\in\mathcal C_o^{L,b}\cap\mathcal{Q} _{n}^{\varepsilon}$. Therefore
    \begin{equation}
        \tau_{x}\le LKn \qquad \text{for every }x\in\mathcal C_o^{L,b}\cap\mathcal{Q}_{n}^{\varepsilon}\label{eq:linear-infection-time-annulus}
    \end{equation}
    for all sufficiently large $n$.

    Whenever $\tau_{x}<\infty$ and $x$ is $(L,b)$-good, no stifling mark with source $x$ occurs before source age
    $\Psi_x\ge U_{x}+b$. Thus
    \begin{equation}
        [\tau_{x},\tau_{x}+U_{x}+b) \subseteq \{t\ge0:x\in S_{t}\}. \label{eq:robust-active-window}
    \end{equation}
    In particular, every vertex of $\mathcal C_o^{L,b}$ remains a spreader for at least $b$ time units.

    Since \(p_{L,b}>p_c^{\mathrm{site}}(G)\), there is almost surely a unique infinite cluster \(\mathcal C_\infty\),
    and \(\mathcal C_\infty=\mathcal C_o^{L,b}\) on \(\mathcal I_{L,b}\). By the ergodic argument used in
    Subsection~\ref{subsec:survival-small-stifling},
    \[
        \frac{|\mathcal C_\infty\cap B_G(o,n)|}{|B_G(o,n)|}
        \longrightarrow \theta_G(p_{L,b})
    \]
    almost surely. Using $\mathcal{Q}_{n}^{\varepsilon}=B_{G}(o,n)\setminus B_{G}(o,r_{n})$, the preceding convergence
    and the volume asymptotics $|B_{G}(o,n)|/n^{D}\to v_{G}$ from \cite{pansu1983} give
    \begin{equation}
        \begin{aligned}
            \frac{\bigl|\mathcal C_o^{L,b} \cap \mathcal{Q}_n^\varepsilon\bigr|}{n^D}
             & = \frac{|\mathcal {C}_\infty \cap B_G(o,n)|}{n^D} - \frac{|\mathcal {C}_\infty \cap B_G(o,r_n)|}{n^D} \\
             & \longrightarrow v_G \theta_G(p_{L,b}) \bigl[1-(1-\varepsilon)^D\bigr]
        \end{aligned}
        \label{eq:robust-annular-density}
    \end{equation}
    almost surely on $\mathcal{I}_{L,b}$.

    Set
    \[
        \gamma_{\varepsilon}:= \frac{1}{2}v_{G}\theta_{G}(p_{L,b}) \bigl[1-(
            1-\varepsilon)^{D}\bigr].
    \]
    Then, almost surely on $\mathcal{I}_{L,b}$,
    \begin{equation}
        |\mathcal C_o^{L,b}\cap\mathcal{Q}_{n}^{\varepsilon}| \ge \gamma_{\varepsilon}
        n^{D}\label{eq:robust-annular-lower-bound}
    \end{equation}
    for all sufficiently large $n$.

    Choose $v_{+}:=LK+b+1$, which is independent of $\varepsilon$. For every sufficiently large $n$ and every
    $x\in\mathcal C_o^{L,b}\cap\mathcal{Q} _{n}^{\varepsilon}$, \eqref{eq:linear-infection-time-annulus} and
    \eqref{eq:robust-active-window} imply that $[\tau_{x},\tau_{x}+b)\subseteq[0,v_{+}n]$ and that $x\in S_{t}$
    throughout this interval. Summing over the vertices in the annulus and using \eqref{eq:robust-annular-lower-bound},
    we obtain
    \begin{equation}
        \int_{0}^{v_+n}|S_{t}\cap\mathcal{Q}_{n}^{\varepsilon}|\,dt \ge b\gamma
        _{\varepsilon}n^{D}\label{eq:annular-space-time-occupation}
    \end{equation}
    almost surely on $\mathcal{I}_{L,b}$, for all sufficiently large $n$.

    Fix $r\in\mathbb{N}$. If the rumor reaches a vertex outside $B_{G}(o,r)$ by time $ar$, then its ancestral line
    contains a self-avoiding path $o=x_{0},x_{1},\ldots,x_{m}$, with $m\ge r+1$. The path is self-avoiding because
    infection times increase strictly along it and each vertex becomes a spreader at most once.

    For a fixed path, let $E_{i}$ be the source age of the first infection mark on the ordered edge $(x_{i-1},x_{i})$.
    These ordered edges are distinct, so $E_{1},\ldots,E_{m}$ are independent $\operatorname{Exp}(\lambda)$ random
    variables. The age at which transmission occurs along an edge is at least the age of its first infection mark.
    Hence traversal of the path by time $ar$ implies $E_{1}+\cdots+E_{m}\le ar\le am$, and
    \[
        \mathbb{P}(E_{1}+\cdots+E_{m}\le am) \le \frac{(\lambda am)^{m}}{m!}\le
        (\mathrm{e}\lambda a)^{m}.
    \]
    The number of self-avoiding paths of length $m$ from $o$ is at most $\Delta(\Delta-1)^{m-1}$. Choose $a>0$ such
    that $a\le1$ and $\mathrm{e}\lambda a(\Delta-1)<1$. A union bound then gives constants $c_{1},C_{1}>0$ such that
    \begin{equation}
        \mathbb{P}\left( \tau_{x}\le ar \text{ for some }x\notin B_{G}(o,r) \right
        ) \le C_{1}e^{-c_1r}. \label{eq:no-fast-active-propagation}
    \end{equation}

    Set $v_{-}(\varepsilon):=a(1-\varepsilon)/2$. Since $a\le1$ and $v_{+}\ge1$, $0<v_{-}(\varepsilon)<1/2<v_{+}$. For
    all sufficiently large $n$, $r_{n}\ge(1-\varepsilon)n/2$ and $ar_{n}\ge v_{-}(\varepsilon) n$. Since
    $\sum_{n}C_{1}e^{-c_1r_n}<\infty$, Borel--Cantelli and \eqref{eq:no-fast-active-propagation} imply that
    \begin{equation}
        S_{t}\cap\mathcal{Q}_{n}^{\varepsilon}= \varnothing \qquad \text{for
            every }t\le v_{-}(\varepsilon)n \label{eq:no-early-annular-activity}
    \end{equation}
    almost surely, for all sufficiently large $n$.

    Write
    \[
        f_{n}(t):=|S_{t}\cap\mathcal{Q}_{n}^{\varepsilon}|, \qquad c_{\mathrm{occ}}
        (\varepsilon):=b\gamma_{\varepsilon}.
    \]
    Combining \eqref{eq:annular-space-time-occupation} and \eqref{eq:no-early-annular-activity}, we obtain
    \begin{equation}
        \int_{v_-(\varepsilon)n}^{v_+n}f_{n}(t)\,dt \ge c_{\mathrm{occ}}(\varepsilon
        )n^{D}\label{eq:annular-occupation-final-window}
    \end{equation}
    almost surely on $\mathcal{I}_{L,b}$, for all sufficiently large $n$.

    Polynomial growth gives a constant $C_{2}<\infty$ such that $|\mathcal{Q}_{n}^{\varepsilon}|\le|B_{G}(o,n)|\le
    C_{2}n^{D}$ for all $n$. Set
    \[
        \delta_{\mathrm{act}}(\varepsilon) := \frac{c_{\mathrm{occ}}(\varepsilon)}{2\bigl(v_{+}-v_{-}(\varepsilon)\bigr)}
        , \qquad \ell_{\mathrm{act}}(\varepsilon) := \frac{c_{\mathrm{occ}}(\varepsilon)}{2C_{2}}
        ,
    \]
    and let
    \[
        J_n:=
        \left\{
        t\in[v_-(\varepsilon)n,v_+n]:
        f_n(t)\ge\delta_{\mathrm{act}}(\varepsilon)n^{D-1}
        \right\}.
    \]
    The contribution of $[v_{-}(\varepsilon)n,v_{+}n]\setminus J_n$ to \eqref{eq:annular-occupation-final-window} is at
    most
    \[
        \delta_{\mathrm{act}}(\varepsilon)n^{D-1}\bigl(v_{+}-v_{-}(\varepsilon
        )\bigr)n = \frac{1}{2}c_{\mathrm{occ}}(\varepsilon)n^{D}.
    \]
    It follows that
    \[
        \int_{J_n}f_{n}(t)\,dt \ge \frac{1}{2}c_{\mathrm{occ}}(\varepsilon)n^{D}
        .
    \]
    Since $f_{n}(t)\le C_{2}n^{D}$,
    \[
        \operatorname{Leb}(J_n) \ge \frac{c_{\mathrm{occ}}(\varepsilon)}{2C_{2}}
        = \ell_{\mathrm{act}}(\varepsilon).
    \]

    These conclusions hold almost surely on $\mathcal{I}_{L,b}$. Since
    $\mathbb{P}(\mathcal{I}_{L,b})=\theta_{G}(p_{L,b})>0$, this proves \eqref{eq:positive-duration-surface-activity}.

    Finally, $\operatorname{Leb}(J_n)>0$ implies $J_n\ne\varnothing$ for every sufficiently large $n$. Taking the
    supremum of $f_{n}$ over $J_n$ and then the lower limit proves \eqref{eq:surface-order-active-propagation}.
\end{proof}
\section{Fluctuations of Maki--Thompson Observables}
\label{sec:application-mt-observables}

Throughout this section, we keep the notation and standing assumptions of Subsection~\ref{subsec:fluctuation-results}:
the initial states are i.i.d.\ with no initial stiflers and spreader density $\beta\in(0,1)$, and assume either that
$m_{\Delta}(\alpha/\lambda)<1$, or that $0<\alpha<\alpha_{\mathrm{fl}}(G,\lambda,\beta)$.

Let $D$ be the degree of polynomial growth of $G$. By Pansu's theorem \cite{pansu1983},
\[
    \frac{|B_{n}|}{n^{D}}\longrightarrow v_{G}\in(0,\infty).
\]
Hence, for every finite $K\subset B_G(o,k)$,
\[
    |KB_n\oplus B_n|
    \le
    2|K|\bigl(|B_{n+k}|-|B_{n-k}|\bigr)
    =
    o(|B_n|).
\]
Since $S$ is symmetric, $B_n^{-1}=B_n$. Applying the preceding estimate to $K^{-1}$ gives
\[
    |B_nK\oplus B_n|
    =
    |K^{-1}B_n\oplus B_n|
    =
    o(|B_n|).
\]
Thus $(B_n)_{n\ge1}$ is an exhaustive two-sided F{\o}lner sequence. Write
\[
    \mathfrak{A}:=\{gB_{n}:g\in\Gamma,\ n\ge1\}.
\]
Fix constants $0<c_{G}\le C_{G}<\infty$ such that
\[
    c_{G}(1+r)^{D}\le |B_{G}(x,r)| \le C_{G}(1+r)^{D}, \qquad x\in V,\ r\ge0.
\]
We take $S$ symmetric. Let $X=(X_{x})_{x\in V}$ be the i.i.d.\ graphical field, where each coordinate $X_{x}$ encodes
  the initial state $\eta_{0}(x)$ alongside the realizations of the infection and stifling Poisson processes on the
  outgoing edges $(x,xs)$ for $s\in S$. This field generates the Maki--Thompson process $\eta$.

For $y\in V$, let $X^{y}$ be obtained from $X$ by replacing $X_{y}$ with an independent copy. Let $\eta^{y}$ be the
process constructed from $X^{y}$, coupled to $\eta$ through the common graphical field away from $y$. For any
functional $F$, write
\[
    \nabla_{y}F(A) := F(X,A)-F(X^{y},A).
\]

For finite $A\subset V$, define
\[
    F_{\mathcal {A}}(X,A) := \#\{x\in A:\eta_s(x)=1\text{ for some }s\ge0\},
\]
and
\[
    F_{\Xi}(X,A) := \sum_{x\in A}\int_{0}^{\infty}\mathbf{1}_{\{\eta_s(x)=1\}}
    \, ds.
\]
Recall that
\[
    L_x=\sup\{s\ge0:\eta_s(x)=1\},
\]
with $L_x=0$ if $x$ never becomes a spreader. For fixed $t\ge0$, set
\[
    F_{S(t)}(X,A) := \sum_{x\in A}\mathbf{1}_{\{L_x>t\}}.
\]
Then
\[
    F_{\mathcal {A}}(X,B_{n}) = |\mathcal {A}_{n}| = |B_{n}|\kappa_{n},
\]
and
\[
    F_{\Xi}(X,B_{n})=\Xi_{n}, \qquad F_{S(t)}(X,B_{n})=S_{n}(t).
\]

By the bound established in Subsection~\ref{subsec:fluctuation-results}, the remaining time spent in the spreader state
is stochastically dominated by $E_{\lambda}+E_{\alpha}$. In particular, every spreader eventually becomes a stifler,
and its occupation time has moments of every order.

The action of $\Gamma$ permutes the i.i.d.\ coordinates of $X$. Thus the field is stationary, and the product action is
mixing and therefore ergodic. The graphical construction is equivariant, so, for every $g\in\Gamma$, every finite
$A\subset V$, and every $\bullet\in\{\mathcal {A},\Xi,S(t)\}$,
\begin{equation}
    F_{\bullet}(\widetilde\varphi_{g}X,gA) = F_{\bullet}(X,A). \label{eq:mt-equivariance}
\end{equation}

\begin{proposition}[Verification for the Maki--Thompson observables]
    \label{prop:verification-mt-observables} There exists
    \[
        \alpha_{\mathrm{fl}}= \alpha_{\mathrm{fl}}(G,\lambda,\beta)>0
    \]
    such that, for every fixed $t\ge0$, the functionals
    \[
        F_{\mathcal {A}}, \qquad F_{\Xi}, \qquad F_{S(t)}
    \]
    satisfy all hypotheses of Theorem~\ref{thm:abstract-clt-functional} whenever either $m_{\Delta}(\alpha/\lambda)<1$,
    or
    \[
        0<\alpha<\alpha_{\mathrm{fl}}.
    \]
\end{proposition}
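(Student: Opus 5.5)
The plan is to reduce all hypotheses of Theorem~\ref{thm:abstract-clt-functional} to a single \emph{localization estimate} for the resampling discrepancy. Let
\[
D_y := \{x\in V:\ \eta(x)\text{ and }\eta^{y}(x)\text{ have different trajectories}\}
\]
be the set of vertices whose trajectories differ in $\eta$ and $\eta^{y}$. The target is an exponential tail
\begin{equation*}
\mathbb P\bigl(D_y\not\subseteq B_G(y,r)\bigr)\le Ce^{-cr}.
\end{equation*}
In both regimes we would then also show that $|D_y|$ has finite moments of every order. In addition we need temporal localization: the influence of the field outside $B_G(A,R)$ on $\eta|_A$ should have a small expected contribution, uniformly in $A$, once summed over boundary vertices.

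\textbf{Subcritical regime.} When $m_\Delta(\alpha/\lambda)<1$, the discrepancy spreads only along transmission or stifling interactions initiated by a discrepant vertex. Changing $X_y$ changes the initial state of $y$ and its outgoing clocks. Any vertex whose trajectory differs must be linked to $y$ by a chain of Poisson marks, each used while its source is a spreader in $\eta$ or in $\eta^{y}$.

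We would dominate this union of genealogies, together with all spreader clusters it touches, by a multitype Galton--Watson process. The key point is that with i.i.d.\ initial spreaders of density $\beta$, the relevant objects are the finite clusters of initially spreading vertices. Each cluster emits a genealogy that is subcritical by the branching bound in the proof of Theorem~\ref{thm:MT-positive-subcritical}. Merging the cluster sizes (which have an exponential tail only if $\beta$ is small relative to percolation) requires a combined offspring count: a spreader cluster of size $k$ plus its descendants.

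I expect a subtlety here. Without a smallness condition on $\beta$, initial spreader clusters may be infinite. However, even adjacent initial spreaders stifle each other at rate $\alpha$ and never infect one another, so interactions between initial spreaders are only stifling. These do not propagate discrepancy beyond neighbours in more than one generation unless infection follows. We would therefore define the exploration through \emph{infection marks only}, plus one-step stifling neighbourhoods, and bound it by a branching process with offspring mean $\Delta\cdot(\text{const})\cdot m_\Delta$-type quantities. If this fails to be subcritical, we would shrink the regime accordingly.

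\textbf{Small-$\alpha$ regime.} Here the rumor from i.i.d.\ spreaders of density $\beta$ fills space quickly. Every ignorant vertex is reached within time $O(\log)$ of the distance to the nearest initial spreader. Once a vertex and all its neighbours are non-ignorant, its dynamics are a simple exponential clock. We would then use a space-time block argument. Choose $T$ large so that, with probability close to one, every vertex of a block is non-ignorant by time $T$ (here we use $\beta>0$ and the good-vertex percolation of Subsection~\ref{subsec:survival-small-stifling} with its $(L,b)$-variant). Discrepancy can travel at most linear speed (by the Yule/first-passage bound \eqref{eq:no-fast-active-propagation}) before all vertices are non-ignorant. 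After that time, discrepancy can propagate only through stifling-time changes, which affect no neighbour's state transitions. So discrepancy is confined to a ball of radius $O(T)$, with exponential tails via a comparison of bad blocks with subcritical percolation. Taking $\alpha_{\mathrm{fl}}$ small makes the block estimate work: the only way the rumor stalls is early stifling of the local spreaders.

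\textbf{Deriving the hypotheses.} Given localization, the abstract hypotheses follow as below.
\begin{itemize}
\item \emph{Moments.} $|\nabla_y F(A)|$ is at most $|D_y|$ times the maximum of the individual occupation times over $D_y$. This has all moments because occupation times are dominated by $E_\lambda+E_\alpha$, so the $\gamma$-moment bound holds with, say, $\gamma=3$. For $\nabla^{(R)}$ we use conditional Jensen.
\item \emph{Stabilization.} $\nabla_y(X,C_k)=\nabla_y(X,V)$ as soon as $D_y\subseteq C_k$, which eventually happens almost surely. The same argument applies to the truncated versions, and $L^2$ convergence in $R$ follows similarly.
\item \emph{Local conditional approximation.} Split $F(X,A_n)-F^{(R)}(X,A_n)$ into vertex contributions. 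Localization shows that each vertex's observable is approximated by its value computed from $X|_{B(x,R)}$, with $L^2$ error $\epsilon_R\to0$. The variance of the sum is handled by the Efron--Stein inequality over coordinates, again using localization.
\end{itemize}

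The main obstacle is the localization estimate in the small-$\alpha$ regime. The process is non-attractive, so there is no monotone coupling, and one must control the discrepancy directly through the block/first-passage argument.
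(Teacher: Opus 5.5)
\textbf{Main gap: the subcritical regime.} Here you have no working argument. You try to dominate the discrepancy cluster itself by a multitype branching process, and you run into possibly infinite clusters of initial spreaders and into discrepancy being passed from one family to another. You then concede that the resulting offspring mean, of order $\Delta\, m_\Delta$, may exceed one, in which case you would ``shrink the regime''. That does not prove the proposition, which claims the hypotheses for \emph{every} $\alpha,\lambda$ with $m_\Delta(\alpha/\lambda)<1$ and every $\beta\in(0,1)$.

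The missing idea is to stop following the discrepancy genealogically and instead play its speed against the lifetime of local spreader activity.
\begin{enumerate}
\item \emph{Finite speed.} To reach distance $r$ from $y$, the discrepancy must traverse a chain of at least $r$ marks. Superpose all infection and stifling clocks of both processes in both orientations (total rate at most $4(\lambda+\alpha)$ per edge) and count paths. This shows the traversal takes time at least $ar$, except with probability $Ce^{-cr}$.
\item \emph{Fast local extinction.} Explore the family of \emph{each} initial spreader on its own. It is dominated by a Galton--Watson tree with root offspring $1+Z_\Delta$ and later offspring $Z_\Delta$, exactly as in Theorem~\ref{thm:MT-positive-subcritical}. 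This uses only that non-ignorance is absorbing, so other families and infinite initial clusters are irrelevant. Hence every family has lifetime and radius with exponential tails. A union bound over initial spreaders, using polynomial growth, shows that $B_G(y,r+1)$ contains a spreader of $\eta$ or $\eta^y$ after time $ar$ with probability at most $C(1+r)^De^{-cr}$.
\end{enumerate}
Every affected mark needs a spreader source in one of the two processes. Therefore the two estimates together give $\mathbb{P}(D_y\not\subseteq B_G(y,r))\le Ce^{-cr}$ for all $\beta\in(0,1)$.

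\textbf{Small-$\alpha$ regime.} Your sketch needs repair before it yields confinement.
\begin{itemize}
\item It is false that every vertex is eventually reached: a vertex whose neighbours are all stifled before transmitting stays ignorant forever.
\item ``Once all vertices are non-ignorant, discrepancy only shifts stifling times'' holds only at vertices all of whose neighbours are non-ignorant in both processes. A vertex that is a spreader in $\eta$ and a stifler in $\eta^y$, sitting on a thin non-ignorant layer, still infects an ignorant neighbour beyond it in one process only.
\item You therefore need an explicit blocking shell built from locally determined good blocks in an annulus, with a bad-path counting estimate. One option is a non-ignorant shell thick enough that its two complementary sides are at distance at least $3$. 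The other, as in the paper, is spheres $\Pi_z$ made stiflers in both processes by prescribed stifling marks after a Richardson growth phase.
\item The finite-speed bound must be proved for the discrepancy, which uses both mark types and both processes. It cannot be borrowed from the single-source rumor estimate.
\end{itemize}

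\textbf{Truncated differences.} These do not stabilize by ``the same argument''. $\nabla_y^{(R)}(X,C_k)$ is a conditional expectation and is never eventually constant. Use the identity $\nabla_yF^{(R)}(A)=\mathbb{E}[\nabla_yF(A)\mid\mathcal{G}_R(A)\vee\sigma(X'_y)]$ for $y\in A^{+R}$, together with an $L^2$ cylinder approximation. This also shows that the truncated limits equal $\nabla_y(\infty)$.

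\textbf{A valid alternative you already have.} Your moment bound for $F_\Xi$ through individual occupation times, each dominated by $E_\lambda+E_\alpha$, is valid if you apply H\"older over the shells $d_G(x,y)=k$. It spares you the temporal-extent estimate that the paper proves through $\Lambda_y$.
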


\begin{proof}
    \medskip

    \noindent
    \emph{Step 1: The discrepancy cluster and a finite-speed estimate.}\par

    Recall that \(X^y\) is obtained from \(X\) by resampling only \(X_y\), so the two constructions use the same
    graphical data away from \(y\). Consider the union of their graphical marks. A shared mark is called
    \emph{affected} if it produces different state updates in the two processes. A mark present in only one
    construction is called affected if it produces a state update in that construction.

    Work on the almost-sure event on which no two distinct marks in this union have the same time coordinate. A
    vertical segment keeps the spatial vertex fixed while time increases. Influence paths are oriented in time but not
    in space: an affected mark on the ordered edge $(x,z)$ may be traversed, at its occurrence time, between $x$ and
    $z$ in either direction. This only enlarges the family of admissible paths. A time-oriented influence path
    generated by the resampling at $y$ starts from $(y,0)$ and follows the two rules below.

    \emph{(i)} At any vertex $x$, the path may move vertically during any interval
    on which
    \[
        \eta_{s}(x)\ne\eta_{s}^{y}(x),
    \]
    and may traverse an affected mark incident to $x$ at its occurrence time. If such a mark occurs at the right
    endpoint of the interval, the path may reach that endpoint and traverse it.

    \emph{(ii)} If an affected mark from one of the resampled outgoing clock
    families with source $y$ occurs at time $s$, the path may follow the vertical
    segment from $(y,0)$ to $(y,s)$ and then traverse that mark, even if the two
    processes agree at $y$ before time $s$.
    This additional rule is needed because the families with source $y$ are
    resampled even when the two processes agree at $y$.

    Let \(\mathfrak D_y\subset V\times[0,\infty)\) be the union of all finite influence paths satisfying the preceding
    rules, together with the initial point \((y,0)\). Then every space--time point at which the two processes disagree
    on the state of a vertex, as well as both space--time endpoints of every affected mark, belongs to $\mathfrak{D}
    _{y}$. Indeed, an initial state discrepancy can occur only at $y$, while every later state discrepancy either
    continues an existing one or is created by an affected mark. Moreover, a mark outside the resampled outgoing clock
    families with source $y$ is present in both constructions and can be affected only if the processes already
    disagree at its source or target.

        For $s\ge0$, write
        \[
            \mathcal{D}_{s}^{y}:= \{x\in V:(x,s)\in\mathfrak{D}_{y}\}.
        \]

        Set
        \[
            R_{y}:= \sup\left\{ d_{G}(y,x): (x,s)\in\mathfrak{D}_{y}\text{ for some
            }s\ge0 \right\},
        \]
        and define separately
        \[
            \Lambda_{y}:= \sup\left\{ s\ge0:
            \begin{array}{l}
                \text{an affected mark occurs at time $s$, or} \\
                \mathbf{1}_{\{\eta_s(x)=1\}} \ne \mathbf{1}_{\{\eta_s^y(x)=1\}} \text{ for some }x\in V
            \end{array}
            \right\},
        \]
        with the convention that the supremum of the empty set is zero. Thus \(\mathfrak D_y\) contains every spatial
        state discrepancy, including persistent \(0/2\) discrepancies; \(R_y\) measures its spatial extent, whereas
        \(\Lambda_y\) measures the temporal extent of its dynamically active part.

        For $r\ge1$, let
        \[
            T_{\mathfrak{D},r}^{y}:= \inf\left\{ s\ge0: d_{G}(y,x)\ge r \text{ for
                some }x\in \mathcal{D}_{s}^{y}\right\}.
        \]

        If the discrepancy cluster reaches graph distance at least $r$ from $y$ before time $ar$, then there is a
        time-oriented path whose spatial projection contains a nearest-neighbor path
        \[
            y=x_{0},x_{1},\ldots,x_{m}, \qquad m\ge r,
        \]
        and whose infection and stifling marks occur before time $ar$.

        After identifying the clocks shared by the two constructions, superpose on each unoriented edge all infection
        and stifling clocks appearing in either construction and in either orientation. The resulting total rate is at
        most
        \[
            \nu_*:=4(\lambda+\alpha).
        \]
        For a fixed spatial path of length $m$, let $t_{i}$ be the time of the $i$th required mark, with $t_{0}=0$.
        Since each $t_{i-1}$ is a stopping time for the filtration generated by the superposed clocks, the strong
        Markov property shows that, conditionally on the past, the waiting time for the next required mark on the $i$th
        edge after $t_{i-1}$ stochastically dominates an exponential random variable of rate $\nu_{*}$, even if that
        edge has already been traversed. Hence the traversal time stochastically dominates
        \[
            E_{1}+\cdots+E_{m},
        \]
        where $E_{1},\ldots,E_{m}$ are independent exponential random variables of rate $\nu_{*}$.

        Fix $0<a<\nu_{*}^{-1}$. Applying Chernoff's inequality with parameter $a^{-1} -\nu_{*}>0$, and using $r\le m$,
        gives
        \[
            \begin{aligned}
                \mathbb{P}\left( \sum_{i=1}^{m}E_{i}\le ar \right) & \le e^{(a^{-1}-\nu_{*})ar}\left( \frac{\nu_{*}}{\nu_{*}+a^{-1}-\nu_{*}}\right)^{m} \\
                                                                   & \le \exp\left\{ m\bigl( 1-a\nu_{*}+\log(a\nu_{*}) \bigr) \right\}.
            \end{aligned}
        \]
        There are at most $\Delta^{m}$ nearest-neighbor paths of length $m$ starting from $y$. Choose $a>0$
        sufficiently small that
        \[
            \log\Delta + 1-a\nu_{*}+ \log(a\nu_{*}) <0.
        \]
        Summing over $m\ge r$, we obtain constants $c,C>0$ such that
        \begin{equation}
            \mathbb{P}(T_{\mathfrak{D},r}^{y}\le ar) \le Ce^{-cr}, \qquad r\ge1,
            \label{eq:fast-discrepancy-propagation}
        \end{equation}
        uniformly in $y$.

        \medskip

        \noindent
        \emph{Step 2: Localization in the subcritical regime.}\par

        Assume first that \( m_{\Delta}(\alpha/\lambda)<1, \) the subcritical parameter regime of
        Theorem~\ref{thm:MT-positive-subcritical}. Fix an initial spreader and reveal its descendants in chronological
        order of birth. Almost surely, every non-initial spreader in this family has a unique parent.

        The conditional comparison used in the proof of Theorem~\ref{thm:MT-positive-subcritical} remains valid when
        descendants of other initial spreaders are present, because it relies only on the fact that non-ignorance is
        absorbing. Indeed, any neighbor that becomes non-ignorant through another family can no longer become a child
        of the spreader and remains non-ignorant. Thus, conditionally on the revealed genealogy, the number of children
        of each explored spreader is dominated by the offspring law used in that proof. The transmission tree rooted at
        each initial spreader is therefore dominated by a Galton--Watson tree whose root offspring distribution is
        stochastically dominated by $1+Z_{\Delta}$ and whose later offspring distributions are stochastically dominated
        by $Z_{\Delta}$. Since $Z_{\Delta}\le\Delta-1$ and has mean less than one, the total population $N$ of the
        dominating tree has an exponential tail, and so does the maximal graph distance of a descendant from the root.
        By successive applications of the strong Markov property, the times spent in the spreader state by the explored
        vertices may be dominated by an i.i.d.\ sequence $(L_i)_{i\ge1}$ with a finite exponential moment.
        Consequently, for every $\varepsilon>0$,
        \[
            \mathbb{P}\left(\sum_{i=1}^{N}L_{i}>t\right) \le \mathbb{P}(N>\varepsilon
            t) + \mathbb{P}\left( \sum_{i=1}^{\lceil\varepsilon t\rceil}L_{i}>t \right
            ).
        \]
        Choose $\theta>0$ such that
        \[
            M(\theta):=\mathbb E[e^{\theta L_1}]<\infty,
        \]
        and then choose $\varepsilon>0$ so that \(\theta-\varepsilon\log M(\theta)>0\). The exponential tail of $N$
        bounds the first term, and Chernoff's inequality the second: for some $C_N,c_N>0$,
        \[
            \mathbb P(N>\varepsilon t)
            +
            \mathbb P\left(
            \sum_{i=1}^{\lceil\varepsilon t\rceil}L_i>t
            \right)
            \le
            C_Ne^{-c_N\varepsilon t}
            +
            M(\theta)e^{-(\theta-\varepsilon\log M(\theta))t}.
        \]
        Hence the family extinction time has a uniform exponential tail.

        Let
        \[
            S_{t}:=\{x\in V:\eta_{t}(x)=1\}, \qquad S_{t}^{y}:=\{x\in V:\eta_{t}^{y}
            (x) =1\}.
        \]
        Summing the preceding bounds over the possible initial spreaders and using polynomial growth, we obtain
        constants $c,C>0$ such that, for every $u,t\ge 0$,
        \begin{equation}
            \begin{aligned}
                 & \mathbb{P}\left( S_{s}\cap B_{G}(y,u)\ne\varnothing \text{ for some }s\ge t \right)                                     \\
                 & \quad+ \mathbb{P}\left( S_{s}^{y}\cap B_{G}(y,u)\ne\varnothing \text{ for some }s\ge t \right) \le C(1+u+t)^{D}e^{-ct},
            \end{aligned}
            \label{eq:subcritical-local-activity}
        \end{equation}
        uniformly in $y$. Indeed, split the initial spreaders according to
        whether they lie within distance $t$ of $B_{G}(y,u)$. For the nearby ones,
        the transmission tree rooted at that spreader must contain a spreader
        alive at time $t$. For a spreader at distance $u+k$ from $y$, with
        $k\ge t$, the family must have radius at least $k$. Thus a union bound
        over all vertices, and hence also over all initial spreaders, gives a sum
        bounded by a constant times
        \[
            |B_{G}(y,u+t)|e^{-ct}+ \sum_{k\ge t}|B_{G}(y,u+k+1)\setminus B_{G}(y,
            u+k)|e^{-ck}.
        \]
        Polynomial growth turns this into \( C(1+u+t)^{D}e^{-ct}, \) after decreasing $c>0$ if necessary.

        If $R_{y}>r$, then either the discrepancy cluster reaches distance $r$ by time $ar$, or an affected mark occurs
        after time $ar$ before the cluster reaches beyond distance $r$. In the latter case, an infection or stifling
        mark requires a spreader in at least one of the coupled processes inside $B_{G}(y,r+1)$. Therefore
        \[
            \{R_{y}>r\} \subseteq \{T_{\mathfrak{D},r}^{y}\le ar\} \cup \left\{ (
            S_{s}\cup S_{s}^{y})\cap B_{G}(y,r+1)\ne\varnothing \text{ for some }
            s\ge ar \right\}.
        \]
        Combining \eqref{eq:fast-discrepancy-propagation} with \eqref{eq:subcritical-local-activity}, and absorbing the
        polynomial factor into the exponential, gives constants $c, C>0$ such that
        \begin{equation}
            \mathbb{P}(R_{y}>r) \le Ce^{-cr}, \qquad r\ge0, \label{eq:subcritical-radius-tail}
        \end{equation}
        uniformly in $y$.

        Finally, suppose that $R_{y}\le t$ and $\Lambda_{y}>t$. At some time $s\ge t$, either an affected mark occurs
        or the processes disagree on the spreader status of some vertex. In either case, at least one of the coupled
        processes contains a spreader in $B_{G}(y,t+1)$ at some time $s\ge t$. Hence
        \[
            \{\Lambda_{y}>t\} \subseteq \{R_{y}>t\} \cup \left\{ (S_{s}\cup S_{s}
                ^{y})\cap B_{G}(y,t+1)\ne\varnothing \text{ for some }s\ge t \right\}
            .
        \]
        Using \eqref{eq:subcritical-radius-tail} and \eqref{eq:subcritical-local-activity}, we obtain
        \begin{equation}
            \mathbb{P}(\Lambda_{y}>t) \le Ce^{-ct}, \qquad t\ge0, \label{eq:subcritical-lifetime-tail}
        \end{equation}
        uniformly in $y$.

        \medskip

        \noindent
        \emph{Step 3: Localization in the small-stifling regime.}\par

        We now choose the scale used in the block construction and the corresponding threshold
        $\alpha_{\mathrm{fl}}>0$.

        Write $Z_{s}:=\{x\in V:\eta_{s}(x)=2\}$ and $Z_{s}^{y}:=\{x\in V:\eta_{s}^{y}(x)=2\}$. For each integer
        $\ell\ge2$, let $\mathcal{N}_{\ell}\subset V$ be a maximal $\ell$-separated set, and define a graph $H_{\ell}$
        on $\mathcal{N}_{\ell}$ by
        \[
            z\sim z' \quad\Longleftrightarrow\quad 0<d_{G}(z,z')\le3\ell.
        \]
        By maximality, every vertex of $V$ lies within distance $\ell$ of $\mathcal{N}_{\ell}$.

        The two-sided volume bounds imply that
        \[
            d_{0}:= \sup_{\ell\ge2}\sup_{z\in\mathcal{N}_\ell}|\mathcal{N}_{\ell}
            \cap B_{G}(z,3\ell)| <\infty
        \]
        and
        \[
            J_{0}:= \sup_{\ell\ge2}\sup_{z\in\mathcal{N}_\ell}|\mathcal{N}_{\ell}
            \cap B_{G}(z,8\ell)| <\infty.
        \]
        The balls
        \[
            B_{G}\left( w, \left\lfloor\frac{\ell-1}{2}\right\rfloor \right), \qquad
            w\in \mathcal{N}_{\ell},
        \]
        are pairwise disjoint, and the bounds follow by comparing their volumes with that of a ball whose radius is
        proportional to $\ell$. In particular, the degree of $H_{\ell}$ is at most $d_{0}$.

        Moreover, from every simple path in $H_{\ell}$ with $m+1$ vertices one can select at least $(m+1)/J_{0}$
        vertices whose balls of radius $4\ell$ are pairwise disjoint. This follows by a greedy selection, deleting
        after each choice all remaining centers within distance $8\ell$.

        Set
        \begin{equation}
            \varepsilon_{\mathrm{blk}}:=(2d_{0})^{-J_0}. \label{eq:bad-block-threshold}
        \end{equation}
        Then
        \begin{equation}
            d_{0}\varepsilon_{\mathrm{blk}}^{1/J_0}=\frac{1}{2}. \label{eq:bad-block-subcriticality}
        \end{equation}

        The constants are chosen in the following order. First, $d_0$ and $J_0$ are fixed in terms of $G$, and
        $\varepsilon_{\mathrm{blk}}=(2d_0)^{-J_0}$. We then choose $\ell$ and $T_\ell$ by \eqref{eq:local-seed-failure}
        and \eqref{eq:local-richardson-failure}, respectively; in particular, $T_\ell$ depends on $\lambda$ and $\ell$,
        but not on $\alpha$. Next, we choose $\alpha_{\mathrm{fl}}$ by \eqref{eq:fluctuation-stifling-threshold}, fix
        $0<\alpha<\alpha_{\mathrm{fl}}$, and define $u_\ell$ and $s_\ell$ by \eqref{eq:local-stifling-time}. Finally,
        for each $M>0$, we choose $K_M$, $W_r$, and $r_0$ below.

        For $z\in\mathcal{N}_{\ell}$, set
        \[
            I_{z}:=B_{G}(z,\ell), \qquad \Pi_{z}:=\{x\in V:d_{G}(z,x)=2\ell\}, \qquad
            Q_{z}:=B_{G}(z,4\ell),
        \]
        and let
        \[
            A_{z}:=\{x\in I_{z}:\eta_{0}(x)=1\}.
        \]
        Choose $\ell\ge2$ sufficiently large that
        \begin{equation}
            \mathbb{P}(A_{z}=\varnothing) = (1-\beta)^{|I_z|}\le \frac{\varepsilon_{\mathrm{blk}}}{4}
            . \label{eq:local-seed-failure}
        \end{equation}

        For every nonempty $A\subset I_{z}$, let $\zeta^{A,z}$ be the Richardson process in $Q_{z}$, started from $A$
        and using only infection marks on oriented edges with both endpoints in $Q_{z}$. Since $Q_{z}$ is finite and
        connected, after fixing $\ell$ we may choose a deterministic $T_{\ell}<\infty$ such that
        \begin{equation}
            \sup_{z\in\mathcal{N}_\ell}\sup_{\varnothing\ne A\subset I_z}\mathbb{P}
            \left( B_{G}(z,3\ell)\not\subset\zeta_{T_\ell}^{A,z}\right) \le \frac{\varepsilon_{\mathrm{blk}}}{4}
            . \label{eq:local-richardson-failure}
        \end{equation}
        Uniformity follows from translation invariance, and by monotonicity it is
        enough to consider single-vertex initial sets.

        There are at most $\Delta|Q_{z}|$ stifling clocks with source in $Q_{z}$. Hence
        \[
            \mathbb{P}\left( \text{some such clock rings during }[0,T_{\ell}] \right
            ) \le \alpha \Delta|Q_{z}|T_{\ell}.
        \]
        Since $|Q_{z}|=|B_{G}(o,4\ell)|$, choose
        \begin{equation}
            0<\alpha_{\mathrm{fl}}\le \frac{\varepsilon_{\mathrm{blk}}}{4\Delta|B_{G}(o,4\ell)|T_{\ell}}
            . \label{eq:fluctuation-stifling-threshold}
        \end{equation}
        From now on, fix $0<\alpha<\alpha_{\mathrm{fl}}$.

        Assume that $A_z\ne\varnothing$, that
        \[
            B_G(z,3\ell)\subset\zeta_{T_\ell}^{A_z,z},
        \]
        and that no stifling clock with source in $Q_z$ rings during $[0,T_\ell]$. Then no vertex of $Q_z$ becomes a
        stifler before time $T_\ell$. Thus every vertex of $Q_{z}$, once it becomes non-ignorant before time $T
        _{\ell}$, remains a spreader until $T_{\ell}$, and every infection mark used by the restricted Richardson
        process is therefore effective in the Maki--Thompson process as well. Consequently, every vertex of
        $B_{G}(z,3\ell)$ is non-ignorant at time $T_{\ell}$. This conclusion depends only on the graphical field inside
        $Q_{z}$.

        Set
        \[
            \pi_{\ell}:=|\Pi_{z}| = |\{x\in V:d_{G}(o,x)=2\ell\}|,
        \]
        which is independent of $z$. For each $x\in\Pi_{z}$, fix a neighbor $x^{\star}$ lying on a geodesic from $x$ to
        $z$. Then $x^{\star}\in B_{G}(z,3\ell)$.

        Choose
        \begin{equation}
            u_{\ell}:=
            \frac{1}{\alpha}
            \log\left(\frac{4\pi_{\ell}}{\varepsilon_{\mathrm{blk}}}\right), \qquad s_{\ell}:=T_{\ell}+u_{\ell}. \label{eq:local-stifling-time}
        \end{equation}
        A union bound gives
        \[
            \mathbb{P}\left(
            \begin{array}{c}
                    \text{for some $x\in\Pi_{z}$, the stifling clock on} \\
                    \text{$(x,x^{\star})$ does not ring during $(T_{\ell},s_{\ell}]$}
                \end{array}
            \right) \le\pi_{\ell}e^{-\alpha u_\ell}= \frac{\varepsilon_{\mathrm{blk}}}{4}
            .
        \]

        We call $z\in\mathcal{N}_{\ell}$ \emph{good} if $A_{z}\ne\varnothing$,
        \[
            B_{G}(z,3\ell)\subset\zeta_{T_\ell}^{A_z,z},
        \]
        no stifling clock with source in $Q_{z}$ rings during $[0,T_{\ell}]$, and, for every $x\in\Pi_{z}$, the clock
        associated with $(x,x^{\star})$ rings at least once during $(T_{\ell},s_{\ell}]$. By the preceding estimates,
        \begin{equation}
            \sup_{z\in\mathcal{N}_\ell}\mathbb{P}(z\text{ is bad}) \le \varepsilon
            _{\mathrm{blk}}. \label{eq:bad-block-probability}
        \end{equation}

        The event $\{z\text{ is good}\}$ is determined by the initial states and the graphical marks with sources in
        $Q_{z}$ occurring before time $s_{\ell}$. Moreover, on this event,
        \begin{equation}
            \Pi_{z}\subset Z_{s_\ell}. \label{eq:local-stifler-cutset}
        \end{equation}
        At time $T_{\ell}$, both $x$ and $x^{\star}$ are non-ignorant for every $x
    \in\Pi_{z}$. Since non-ignorance is absorbing, $x^{\star}$ remains non-ignorant.
        When the prescribed clock rings, either $x$ is already a stifler or it is
        still a spreader and is turned into one. The same conclusion holds for
        every process whose graphical field agrees with $X$ on $Q_z$ up to
        time $s_\ell$; see Figure~\ref{fig:good-block-anatomy}.

        \begin{figure}[ht]
            \centering
            \begin{tikzpicture}[
                    scale=0.80,
                    every node/.style={font=\small},
                    spr/.style={circle, fill=orange!85!red, inner sep=1.6pt},
                    sti/.style={circle, fill=blue!75!black, inner sep=1.6pt},
                    nonign/.style={circle, fill=black!35, inner sep=1.6pt},
                    qbox/.style={draw=black!60, dashed, line width=0.9pt},
                    pisty/.style={draw=blue!65, densely dotted, line width=1.1pt}
                ]

                \begin{scope}[shift={(0,0)}]
                    \fill[black!4] (0,0) circle (2.0);
                    \fill[orange!12] (0,0) circle (0.5);

                    \draw[qbox] (0,0) circle (2.0);
                    \draw[black!35, line width=0.8pt] (0,0) circle (1.5);
                    \draw[pisty] (0,0) circle (1.0);
                    \draw[orange!70!red, line width=0.9pt] (0,0) circle (0.5);

                    \node[circle, fill=black, inner sep=1.4pt] at (0,0) {};
                    \node[below=3pt] at (0,0) {$z$};

                    \foreach \a/\rr in {35/0.30,155/0.36,265/0.26}
                        { \node[spr] at (\a:\rr) {}; }

                    \node[orange!70!red] at (-1.85,-2.1)
                    {$I_z=B_G(z,\ell)$};
                    \draw[->, orange!70!red]
                    (-1.770,-1.55) -- (-0.36,-0.34);

                    \node[blue!65!black] at (2.05,1.55) {$\Pi_z$};
                    \draw[->, blue!65]
                    (1.85,1.42) -- (0.74,0.70);

                    \node[black!55] at (-2.45,1.55)
                    {$B_G(z,3\ell)$};
                    \draw[->, black!55]
                    (-1.95,1.30) -- (-1.10,1.02);

                    \node at (2.1,-2.2)
                    {$Q_z=B_G(z,4\ell)$};
                    \draw[->]
                    (1.80,-1.82) -- (1.44,-1.40);

                    \node[below=6pt] at (0,-2.35) {$t=0$};
                \end{scope}

                \begin{scope}[shift={(5.4,0)}]
                    \fill[black!4] (0,0) circle (2.0);
                    \fill[orange!25] (0,0) circle (1.5);

                    \draw[qbox] (0,0) circle (2.0);
                    \draw[orange!70!red, line width=1.1pt] (0,0) circle (1.5);
                    \draw[pisty] (0,0) circle (1.0);

                    \node[circle, fill=black, inner sep=1.4pt] at (0,0) {};

                    \foreach \a in {20,80,140,200,260,320}
                        { \node[spr] at (\a:1.0) {}; }
                    \foreach \a in {50,170,290}
                        { \node[spr] at (\a:1.35) {}; }
                    \foreach \a in {10,130,250}
                        { \node[spr] at (\a:0.55) {}; }

                    \node[orange!70!red, align=center] at (0,2.55)
                    {$B_G(z,3\ell)\subset\zeta^{A_z,z}_{T_\ell}$\\[-1pt]
                    {\footnotesize all spreaders}};

                    \node[below=6pt] at (0,-2.35) {$t=T_\ell$};
                \end{scope}

                \begin{scope}[shift={(10.8,0)}]
                    \fill[black!4] (0,0) circle (2.0);
                    \fill[orange!12] (0,0) circle (1.5);

                    \draw[qbox] (0,0) circle (2.0);
                    \draw[blue!70!black, line width=1.6pt] (0,0) circle (1.0);

                    \node[circle, fill=black, inner sep=1.4pt] at (0,0) {};

                    \foreach \a in {20,90,160,200,250,300,340}
                        { \node[sti] at (\a:1.0) {}; }
                    \foreach \a in {45,175,285}
                        { \node[spr] at (\a:1.35) {}; }
                    \foreach \a in {15,135,255}
                        { \node[spr] at (\a:0.55) {}; }

                    \node[
                        sti,
                        label={[blue!70!black]above right:$x$}
                    ] (xx) at (58:1.0) {};

                    \node[nonign] (xs) at (58:0.88) {};

                    \node[orange!70!red] at (1.42,0.30) {$x^\star$};
                    \draw[->, blue!70!black, line width=0.9pt]
                    (xx) -- (xs);

                    \node[blue!70!black, align=center] at (0,2.55)
                    {$\Pi_z\subset Z_{s_\ell}$\\[-1pt]
                    {\footnotesize common stifler sphere}};

                    \node[below=6pt] at (0,-2.35) {$t=s_\ell$};
                \end{scope}

                \draw[->, line width=1pt]
                (-2.6,-3.15) -- (13.6,-3.15);

                \foreach \x in {0,5.4,10.8}
                    { \draw[line width=1pt] (\x,-3.05) -- (\x,-3.25); }

                \draw[
                    decorate,
                    decoration={brace, amplitude=5pt, mirror}
                ]
                (0.1,-3.45) -- (5.3,-3.45);

                \draw[
                    decorate,
                    decoration={brace, amplitude=5pt, mirror}
                ]
                (5.5,-3.45) -- (10.7,-3.45);

                \node[orange!70!red, align=center] at (1.5,-4.15)
                {growth phase: Richardson inside $Q_z$,\\[-1pt]
                    no stifling mark with source in $Q_z$};

                \node[blue!70!black, align=center] at (9.4,-4.15)
                {stifling phase: for each $x\in\Pi_z$, the mark\\[-1pt]
                    on $(x,x^\star)$ rings, turning $x$ into a stifler};

            \end{tikzpicture}

            \caption[Anatomy of a good block]{Good-block mechanism in Step~3.
                Starting from $A_z\subset I_z$, the restricted Richardson process
                covers $B_G(z,3\ell)$ by time $T_\ell$, while no stifling mark with
                source in $Q_z$ occurs. During $(T_\ell,s_\ell]$, the prescribed
                stifling marks turn every $x\in\Pi_z$ into a stifler, yielding
                $\Pi_z\subset Z_{s_\ell}$. The radii
                $\ell,2\ell,3\ell,4\ell$ are drawn to scale.}
            \label{fig:good-block-anatomy}
        \end{figure}

        The set $\Pi_{z}$ separates $B_{G}(z,2\ell-1)$ from $V\setminus B_{G}(z,2\ell )$. The field of good-block
        indicators is finite-range dependent: if the balls $Q_{z}$ are pairwise disjoint, the corresponding events are
        independent. Consequently, from every simple path of length $m$ in $H_{\ell}$ one can select at least $m/J_{0}$
        vertices whose bad-block events are mutually independent.

        Fix $M>0$, and choose
        \[
            K_{M}>\frac{8\ell(M+D+1)}{\log2}.
        \]
        For $r\ge2$, set \( W_{r}:=\lceil K_{M}\log r\rceil. \) Choose $r_{0}\ge2$ so that, for every $r\ge r_{0}$,
        \[
            W_{r}\le r, \qquad W_{r}\ge36\ell.
        \]
        For $r\ge r_{0}$, let $\mathsf{G}_{r,y}$ be the event that there is no simple path \( z_{0},z_{1},\ldots,z_{m}
        \) of bad vertices in $H_{\ell}$ such that
        \[
            Q_{z_i}\subset B_{G}(y,r+W_{r})\setminus B_{G}(y,r), \qquad 0\le i\le
            m,
        \]
        and
        \[
            m\ge \left\lfloor \frac{W_{r}-18\ell}{3\ell}\right\rfloor.
        \]

        For every $r\ge r_{0}$,
        \[
            \left\lfloor \frac{W_{r}-18\ell}{3\ell}\right\rfloor \ge \frac{W_{r}}{8\ell}
            .
        \]
        Since $W_{r}\le r$, the initial vertex of any such bad path belongs to $B _{G}(y,2r)$. Hence, for $r\ge1$, the
        number of possible initial vertices of such a path is at most
        \[
            |B_{G}(y,2r)| \le C_{G}(1+2r)^{D}\le 3^{D}C_{G}r^{D}.
        \]

        Moreover, a path of length $m$ contains at least $m/J_{0}$ vertices whose bad-block events are mutually
        independent. Therefore, by \eqref{eq:bad-block-probability} and \eqref{eq:bad-block-subcriticality},
        \[
            \begin{aligned}
                \mathbb{P}(\mathsf{G}_{r,y}^{c}) & \le 3^{D}C_{G}r^{D}\sum_{ m\ge \left\lfloor (W_{r}-18\ell)/(3\ell) \right\rfloor }d_{0}^{m}\varepsilon_{\mathrm{blk}}^{m/J_0} \\
                                                 & \le 2\cdot3^{D}C_{G}r^{D}2^{-W_{r}/(8\ell)}.
            \end{aligned}
        \]
        By the choice of $K_{M}$, there exists $C_{M}<\infty$ such that
        \begin{equation}
            \mathbb{P}(\mathsf{G}_{r,y}^{c}) \le C_{M}r^{-M}, \qquad r\ge r_{0},
            \label{eq:annular-block-polynomial-tail}
        \end{equation}
        uniformly in $y$. After increasing $C_{M}$, the same estimate holds for
        all $r\ge2$.

        On $\mathsf{G}_{r,y}$, define
        \[
            \Sigma_{r,y}:= \bigcup_{\substack{ z\in\mathcal{N}_\ell,\ z\text{ good}\\ Q_z\subset B_G(y,r+W_{r})\setminus B_G(y,r) }}
            \Pi_{z}.
        \]

        We will use the following nested balls:
        \begin{align*}
            B_G(y,r)
             & \subset B_G(y,r+6\ell)
            \subset B_G(y,r+7\ell)
            \subset B_G(y,r+W_r-7\ell)    \\
             & \subset B_G(y,r+W_r-6\ell)
            \subset B_G(y,r+W_r).
        \end{align*}
        We claim that \(\Sigma_{r,y}\) separates \(B_G(y,r)\) from
        \(V\setminus B_G(y,r+W_r)\); see
        Figure~\ref{fig:annulus-separation-unrolled}.

        \begin{figure}[ht]
            \centering

            \begin{tikzpicture}[
                    scale=0.85,
                    line cap=round,
                    line join=round,
                    every node/.style={font=\small},
                    >=Latex,
                    center/.style={
                            rectangle,
                            draw=coarseteal!85!black,
                            fill=coarseteal,
                            inner sep=1.45pt
                        },
                    pathv/.style={
                            circle,
                            fill=black,
                            inner sep=1.55pt
                        },
                    lvlouter/.style={
                            draw=black,
                            line width=1.10pt
                        },
                    lvlinner/.style={
                            draw=black!48,
                            line width=0.70pt
                        },
                    lvlmargin/.style={
                            draw=black!22,
                            line width=0.65pt
                        }
                ]

                %
                %

                \def\xr{0}
                \def\xsixL{1.8}
                \def\xsevenL{2.1}
                \def\xsevenR{9.9}
                \def\xsixR{10.2}
                \def\xwr{12}

                \def\ytop{1.45}
                \def\ybot{-1.95}


                \fill[black!10]
                (-1.40,\ybot) rectangle (\xr,\ytop);

                \fill[blue!5]
                (\xr,\ybot) rectangle (\xwr,\ytop);

                \draw[lvlouter]
                (\xr,\ybot) -- (\xr,\ytop);

                \draw[lvlouter]
                (\xwr,\ybot) -- (\xwr,\ytop);

                \draw[lvlmargin]
                (\xsixL,\ybot) -- (\xsixL,\ytop);

                \draw[lvlmargin]
                (\xsixR,\ybot) -- (\xsixR,\ytop);

                \draw[lvlinner]
                (\xsevenL,\ybot) -- (\xsevenL,\ytop);

                \draw[lvlinner]
                (\xsevenR,\ybot) -- (\xsevenR,\ytop);

                %

                \coordinate (zstart) at (2.25,-0.55);

                \coordinate (cA) at (3.05, 0.10);
                \coordinate (cB) at (3.75,-0.50);
                \coordinate (cC) at (3.55,-0.85);
                \coordinate (cD) at (4.55, 0.35);
                \coordinate (cE) at (5.45,-0.10);
                \coordinate (cF) at (6.45,-0.25);

                \coordinate (zi) at (7.15,-0.75);

                \coordinate (cG)    at (8.25,0.40);
                \coordinate (cH)    at (9.15,0.05);
                \coordinate (cLast) at (9.80,0.30);

                %

                \coordinate (cAd) at ($(cA)+(0,-0.55)$);
                \coordinate (cBd) at ($(cB)+(0,-0.55)$);
                \coordinate (cCd) at ($(cC)+(0,-0.55)$);
                \coordinate (cDd) at ($(cD)+(0,-0.55)$);
                \coordinate (cEd) at ($(cE)+(0,-0.55)$);
                \coordinate (cFd) at ($(cF)+(0,-0.55)$);

                \coordinate (cGd)    at ($(cG)+(0,-0.55)$);
                \coordinate (cHd)    at ($(cH)+(0,-0.55)$);
                \coordinate (cLastd) at ($(cLast)+(0,-0.55)$);

                %

                \draw[
                    black!55,
                    densely dotted,
                    line width=0.90pt
                ]
                (zstart) circle (0.85);

                \node[
                    black!55,
                    scale=0.92
                ] (Qlabel) at (0.78,-2.43)
                {$Q_{z_0}$};

                \draw[
                    ->,
                    black!50,
                    line width=0.45pt
                ]
                (Qlabel.north east)
                -- (1.67,-1.17);

                %

                \draw[
                    coarseteal!85!black,
                    densely dashed,
                    line width=0.70pt
                ]
                (zstart)
                -- (cAd)
                -- (cBd)
                -- (cCd)
                -- (cDd)
                -- (cEd)
                -- (cFd)
                -- (zi)
                -- (cGd)
                -- (cHd)
                -- (cLastd);


                \coordinate (vi) at (\xsevenL,-0.60);
                \coordinate (vj) at (\xsevenR, 0.40);

                \draw[
                    black,
                    line width=1.45pt
                ]
                (-1.20,0.20)
                -- (-0.65,-0.15)
                -- (0.55,0.45)
                -- (1.30,-0.25)
                -- (vi)
                -- (2.90,0.05)
                -- (3.90,-0.55)
                -- (3.40,-1.05)
                -- (4.60,0.45)
                -- (5.50,-0.15)
                -- (6.50,-0.30)
                -- (7.20,-0.90)
                -- (8.30,0.45)
                -- (9.20,0.00)
                -- (vj)
                -- (11.00,-0.05)
                -- (\xwr,0.35)
                -- (13.10,0.05);

                %

                \node[center] at (zstart) {};

                \node[center] at (cAd) {};
                \node[center] at (cBd) {};
                \node[center] at (cCd) {};
                \node[center] at (cDd) {};
                \node[center] at (cEd) {};
                \node[center] at (cFd) {};

                \node[center] at (zi) {};

                \node[center] at (cGd) {};
                \node[center] at (cHd) {};
                \node[center] at (cLastd) {};


                \node[pathv] at (vi) {};
                \node[pathv] at (vj) {};

                \node[
                    anchor=south east,
                    scale=0.96
                ] at ($(vi)+(-0.12,0.16)$)
                {$v_i$};

                \node[
                    anchor=south east,
                    scale=0.96
                ] at ($(vj)+(-0.12,0.10)$)
                {$v_j$};


                \draw[
                    coarseteal!85!black,
                    densely dotted,
                    line width=0.85pt
                ]
                (zi) circle (0.30);

                %
                \draw[
                    green!55!black,
                    dashed,
                    line width=1.00pt
                ]
                (zi) circle (0.60);

                \node[
                    circle,
                    fill=red,
                    inner sep=1.8pt
                ] at (7.623005,-0.380857) {};

                \node[
                    coarseteal!85!black,
                    anchor=north west,
                    scale=0.90
                ] at ($(zi)+(0.10,-0.08)$)
                {$z_i$};

                \node[
                    coarseteal!85!black,
                    anchor=north east,
                    scale=0.94
                ] at ($(zi)+(-0.68,-0.67)$)
                {$B_G(z_i,\ell)$};

                \node[
                    green!55!black,
                    anchor=north west,
                    scale=0.94
                ] at ($(zi)+(0.80,-0.69)$)
                {$\Pi_{z_i}$};


                \node[
                    black!60,
                    anchor=east,
                    scale=0.94
                ] at (0.0512,-1.43)
                {$B_G(y,r)$};

                \node[
                    blue!70!black,
                    anchor=west,
                    scale=0.94
                ] at (12.18,-1.43)
                {$V\setminus B_G(y,r+W_r)$};


                \draw[
                    decorate,
                    decoration={
                            brace,
                            amplitude=5pt
                        }
                ]
                (\xr,1.68)
                -- (\xwr,1.68);

                \node[
                    scale=1.02
                ] at (6,1.98)
                {$W_r$};


                \draw[
                    decorate,
                    decoration={
                            brace,
                            amplitude=4pt,
                            mirror
                        }
                ]
                (0.03,1.48)
                -- (1.76,1.48);

                \node[
                    black!65,
                    scale=0.88
                ] at (0.90,1.11)
                {$6\ell$};

                \draw[
                    decorate,
                    decoration={
                            brace,
                            amplitude=4pt,
                            mirror
                        }
                ]
                (10.24,1.48)
                -- (11.97,1.48);

                \node[
                    black!65,
                    scale=0.88
                ] at (11.10,1.11)
                {$6\ell$};


                \draw[
                    decorate,
                    decoration={
                            brace,
                            amplitude=5pt,
                            mirror
                        }
                ]
                (2.25,-2.10)
                -- (9.80,-2.10);

                \node[
                    scale=0.94
                ] at (6.02,-2.56)
                {$\geq W_r-18\ell$};


                \draw[
                    ->,
                    line width=0.90pt
                ]
                (-1.40,-3.32)
                -- (13.45,-3.32)
                node[
                    right=3pt,
                    scale=0.94
                ]
                {$d_G(y,\cdot)$};

                \draw[line width=0.90pt]
                (0,-3.22) -- (0,-3.42);

                \draw[line width=0.90pt]
                (1.8,-3.22) -- (1.8,-3.42);

                \draw[line width=0.90pt]
                (2.1,-3.22) -- (2.1,-3.42);

                \draw[line width=0.90pt]
                (9.9,-3.22) -- (9.9,-3.42);

                \draw[line width=0.90pt]
                (10.2,-3.22) -- (10.2,-3.42);

                \draw[line width=0.90pt]
                (12,-3.22) -- (12,-3.42);

                %

                \node[
                    anchor=north,
                    scale=0.84
                ] at (0,-3.52)
                {$r$};

                \node[
                    anchor=north east,
                    rotate=90,
                    scale=0.80
                ] at (1.5,-3.52)
                {$r+6\ell$};

                \node[
                    anchor=north east,
                    rotate=90,
                    scale=0.80
                ] at (2.0,-3.52)
                {$r+7\ell$};

                \node[
                    anchor=north east,
                    rotate=90,
                    scale=0.80
                ] at (9.7,-3.52)
                {$r+W_r-7\ell$};

                \node[
                    anchor=north east,
                    rotate=90,
                    scale=0.80
                ] at (10.2,-3.52)
                {$r+W_r-6\ell$};

                \node[
                    anchor=north east,
                    rotate=90,
                    scale=0.80
                ] at (12,-3.52)
                {$r+W_r$};

            \end{tikzpicture}

            \caption[Separation by the stifler cutset]{The annulus unrolled along
                $d_G(y,\cdot)$. A crossing path (black) avoiding $\Sigma_{r,y}$
                induces, through the $\ell$-net $\mathcal N_\ell$, a coarse path
                $z_0,\ldots,z_m$ in $H_\ell$ (teal). The intermediate portion of
                the coarse path is displaced slightly downward for visual clarity.
                The $6\ell$ margins keep the corresponding blocks $Q_z$ inside the
                annulus. If some $z_i$ were good, the crossing path would enter
                $B_G(z_i,\ell)$ and subsequently cross
                $\Pi_{z_i}\subset\Sigma_{r,y}$ (red point), a contradiction.
                Thus every $z_i$ is bad, while the first and last centres are
                separated by at least $W_r-18\ell$, forcing
                $m\geq\lfloor(W_r-18\ell)/(3\ell)\rfloor$ and contradicting
                $\mathsf G_{r,y}$. Drawn with $W_r=40\ell$; transverse positions
                are schematic.}
            \label{fig:annulus-separation-unrolled}
        \end{figure}

        Suppose otherwise that a nearest-neighbor path $v_{0},v_{1},\ldots,v_{N}$ crosses the annulus without meeting
        $\Sigma_{r,y}$. Let $j$ be the first index at which the path reaches distance at least $r+W_{r}-7\ell$ from
        $y$, and let $i<j$ be the last index before $j$ at which its distance from $y$ is at most $r+7\ell$. Since
        consecutive vertices of a nearest-neighbor path have distances to $y$ differing by at most one, the choices of
        $i$ and $j$ imply
        \[
            d_{G}(y,v_{i})=r+7\ell, \qquad d_{G}(y,v_{j})=r+W_{r}-7\ell,
        \]
        and
        \[
            r+7\ell < d_{G}(y,v_{k}) < r+W_{r}-7\ell
        \]
        for every $i<k<j$. Thus every vertex of the intervening segment lies between these two radial levels.

        Associate with each vertex of this segment a point of $\mathcal{N}_{\ell}$ at distance at most $\ell$. Every
        chosen center $z$ satisfies
        \[
            r+6\ell \le d_{G}(y,z) \le r+W_{r}-6\ell.
        \]
        Since $Q_{z}=B_{G}(z,4\ell)$,
        \[
            Q_{z}\subset B_{G}(y,r+W_{r})\setminus B_{G}(y,r).
        \]
        Consecutive chosen centers are equal or adjacent in $H_{\ell}$. After deleting repetitions and loops, they form
        a simple path $z_{0},z_{1},\ldots ,z_{m}$ with this property.

        If some $z_{i}$ were good, the original path would meet $B_{G}(z_{i},\ell)$. Since
        \[
            B_{G}(z_{i},2\ell)\subset B_{G}(y,r+W_{r})
        \]
        and the path eventually leaves $B_{G}(y,r+W_{r})$, it would subsequently leave $B_{G}(z_{i},2\ell)$. It would
        therefore meet $\Pi_{z_i}\subset\Sigma _{r,y}$, a contradiction. Hence every vertex of the coarse path is bad.

        The first and last centers are at distance at least $W_{r}-18\ell$, while every edge of $H_{\ell}$ has
        $G$-length at most $3\ell$. Hence
        \[
            m \ge \left\lfloor \frac{W_{r}-18\ell}{3\ell}\right\rfloor,
        \]
        contradicting $\mathsf{G}_{r,y}$.

        For every center $z$ used to define $\Sigma_{r,y}$, we have $y\notin Q_{z}$. Hence $X$ and $X^{y}$ agree on all
        coordinates determining whether $z$ is good. By \eqref{eq:local-stifler-cutset},
        \begin{equation}
            \Sigma_{r,y}\subset Z_{s_\ell}\cap Z_{s_\ell}^{y}. \label{eq:common-stifler-cutset}
        \end{equation}

        A cutset consisting of stiflers in both coupled processes blocks the discrepancy. Marks with source in the
        cutset and transmission marks targeting it are ineffective in both processes. A stifling mark directed toward
        the cutset can change only its source, which remains in the same component. Thus no affected mark can carry the
        discrepancy across the cutset; see Figure~\ref{fig:discrepancy-common-cutset}.

        \begin{figure}[ht]
            \centering
            \scalebox{0.82}{
                \begin{tikzpicture}[
                        every node/.style={font=\small},
                        qblock/.style={
                                draw=green!55!black,
                                dashed,
                                fill=green!8,
                                line width=0.9pt
                            },
                        pisphere/.style={
                                draw=blue!75!black,
                                densely dotted,
                                line width=1.1pt
                            },
                        stifler/.style={
                                circle,
                                fill=blue!75!black,
                                inner sep=1.55pt
                            }
                    ]


                    \newcommand{\goodblock}[3]{%
                        \begin{scope}[shift={(#1,#2)}]
                            \filldraw[qblock] (0,0) circle (0.55);
                            \draw[pisphere] (0,0) circle (0.275);

                            \foreach \ang in {25,115,205,295}
                                { \node[stifler] at (\ang:0.275) {}; }

                            \node[
                                circle,
                                fill=black,
                                inner sep=0.9pt
                            ] at (0,0) {};
                        \end{scope}
                    }

                    \fill[blue!5] (0,0) circle (4.6);
                    \fill[white]  (0,0) circle (2.3);

                    \draw[dashed, line width=1pt] (0,0) circle (2.3);
                    \draw[dashed, line width=1pt] (0,0) circle (4.6);

                    \node at (0,2.72) {$B_G(y,r)$};

                    \node[blue!70!black, align=center]
                    at (0,4.05)
                    {$B_G(y,r+W_r)\setminus B_G(y,r)$};

                    \draw[
                        red!75,
                        fill=red!10,
                        line width=1.1pt
                    ]
                    (-1.15,-0.55)
                    .. controls (-1.55,-1.05) and (-1.25,-1.55) ..
                    (-0.55,-1.62)
                    .. controls (0.35,-1.78) and (1.10,-1.38) ..
                    (1.28,-0.78)
                    .. controls (1.42,-0.12) and (0.75,0.28) ..
                    (0.10,0.18)
                    .. controls (-0.45,0.12) and (-0.88,-0.05) ..
                    (-1.15,-0.55)
                    -- cycle;

                    \node[red!80!black]
                    at (0,-0.82)
                    {$\mathcal D_{s_\ell}^{y}$};

                    \node[
                        circle,
                        fill=black,
                        inner sep=1.6pt
                    ] (y) at (0,0.15) {};

                    \node[above=4pt] at (y) {$y$};

                    \goodblock{-2.44}{ 2.44}{1}
                    \goodblock{-3.45}{ 0.00}{2}
                    \goodblock{-1.98}{-2.83}{3}
                    \goodblock{ 0.89}{-3.33}{4}
                    \goodblock{ 3.24}{ 1.18}{5}

                    \node[green!45!black, font=\scriptsize]
                    at (-2.44,3.27)
                    {$Q_{z_1}$};

                    \node[blue!75!black, font=\scriptsize]
                    at (-1.70,2.18)
                    {$\Pi_{z_1}$};

                    \draw[
                        blue!75!black,
                        thick,
                        ->,
                        bend left=10
                    ]
                    (5.05,1.35) to (3.55,1.45);

                    \node[blue!75!black, align=left, anchor=west] at (5.15,1.20)
                    {$\Sigma_{r,y}$\\[1mm]
                    {\footnotesize common stifler cutset}};

                    \draw[<->, thick]
                    (1.63,-1.63) -- (3.18,-3.18);

                    \node[right]
                    at (2.43,-2.08)
                    {$W_r$};

                    \node[
                        red!75!black,
                        align=center
                    ]
                    at (-6.25,-2.25)
                    {discrepancy confined\\ after time $s_\ell$};

                    \draw[
                        red!75!black,
                        thick,
                        ->
                    ]
                    (-4.35,-2.00) -- (-0.75,-1.15);

                \end{tikzpicture}
            }

            \caption[Localization by a common stifler cutset]{Good blocks in the
                annulus produce a common stifler cutset $\Sigma_{r,y}$ for the two
                coupled processes. On $\mathsf G_{r,y}$ it separates the inner and
                outer regions, so a discrepancy that has not reached distance $r$ by
                time $s_\ell$ remains confined to the inner component thereafter.}
            \label{fig:discrepancy-common-cutset}
        \end{figure}

        Therefore, on
        \[
            \mathsf{G}_{r,y}\cap \{T_{\mathfrak{D},r}^{y}>s_{\ell}\},
        \]
        the discrepancy lies inside $B_{G}(y,r)$ when the cutset is completed and cannot cross it afterwards.
        Consequently,
        \begin{equation}
            \{R_{y}>r+W_{r}\} \subseteq \{T_{\mathfrak{D},r}^{y}\le s_{\ell}\} \cup
            \mathsf{G}_{r,y}^{c}. \label{eq:radius-block-inclusion}
        \end{equation}

        Once $(G,\lambda,\alpha,\beta)$ is fixed, $\ell$ and $s_{\ell}$ are fixed. Hence $s_{\ell}\le ar$ for all
        sufficiently large $r$. Combining \eqref{eq:fast-discrepancy-propagation},
        \eqref{eq:annular-block-polynomial-tail}, and \eqref{eq:radius-block-inclusion}, and taking $r=\lfloor
        R/2\rfloor$, for which $r+W_{r}\le R$ when $R$ is large, gives, for every $M>0$,
        \begin{equation}
            \mathbb{P}(R_{y}>R) \le C_{M}R^{-M}, \qquad R\ge2, \label{eq:supercritical-radius-tail}
        \end{equation}
        uniformly in $y$, after increasing $C_{M}$ to cover bounded $R$.

        The same cutset controls the lifetime of the discrepancy cluster. On \( \mathsf{G}_{r,y}\cap
        \{T_{\mathfrak{D},r}^{y}>s_{\ell}\}, \) every future transition whose effect may differ in the two processes is
        confined to the component of $V\setminus\Sigma_{r,y}$ containing $y$, which lies in $B_{G}(y,r+W_{r})$.

        From time $s_{\ell}$ onward, follow both processes inside this component, keeping $\Sigma_{r,y}$ as a fixed
        stifler boundary. Since the cutset prevents discrepancies from entering or leaving the component, it is enough
        to continue these restricted evolutions until neither contains a spreader. Each vertex of the component changes
        state at most twice in each evolution, so the number of remaining effective state changes is at most
        \[
            4|B_{G}(y,r+W_{r})| \le 4C_{G}(1+r+W_{r})^{D}\le 2^{D+2}C_{G}(1+r)^{D}
            ,
        \]
        where we used $W_{r}\le r$.

        Whenever at least one restricted process contains a spreader, the total rate of its next effective state change
        is at least \( \lambda_{*}:=\min\{\lambda,\alpha\}. \) Either the spreader has an ignorant neighbor, and then
        an infection mark is effective at rate at least $\lambda$; or all its neighbors are non-ignorant, and a
        stifling mark is effective at rate at least $\alpha$. Once neither restricted process contains a spreader, no
        event counted by $\Lambda_{y}$ can occur afterwards.

        Conditionally on the configuration after each effective transition, the waiting time to the next effective
        state change is exponential with rate at least $\lambda_{*}$. By successive applications of the strong Markov
        property, these waiting times may be coupled with independent exponential random variables of rate
        $\lambda_{*}$ that dominate them. Hence, for every $u\ge0$,
        \begin{equation}
            \begin{aligned}
                \mathbb{P}(\Lambda_{y}>s_{\ell}+u) \le{} & \mathbb{P}(T_{\mathfrak{D},r}^{y}\le s_{\ell}) + \mathbb{P}(\mathsf{G}_{r,y}^{c}) \\
                                                         & + \mathbb{P}\left( \sum_{i=1}^{\lceil 2^{D+2}C_G(1+r)^D\rceil}E_{i}>u \right),
            \end{aligned}
            \label{eq:supercritical-lifetime-bound}
        \end{equation}
        where the $E_{i}$'s are independent exponential random variables of rate
    $\lambda_{*}$.

        Take
        \[
            r:=\left\lfloor t^{1/(2D)}\right\rfloor, \qquad u:=t-s_{\ell}.
        \]
        For all sufficiently large $t$,
        \[
            u\ge\frac{t}{2}, \qquad 2^{D+2}C_{G}(1+r)^{D}=O(t^{1/2}).
        \]
        Chernoff's inequality gives an exponential bound in $t$ for the last term in
        \eqref{eq:supercritical-lifetime-bound}. Applying \eqref{eq:annular-block-polynomial-tail} with exponent $2DM$,
        together with \eqref{eq:fast-discrepancy-propagation}, yields, for every $M>0$,
        \begin{equation}
            \mathbb{P}(\Lambda_{y}>t) \le C_{M}t^{-M}, \qquad t\ge2, \label{eq:supercritical-lifetime-tail}
        \end{equation}
        uniformly in $y$.

        In either parameter regime considered above, for every $M>0$,
        \begin{equation}
            \sup_{y\in V}\left[ \mathbb{P}(R_{y}>r) + \mathbb{P}(\Lambda_{y}>r) \right
            ] \le C_{M}r^{-M}, \qquad r\ge2. \label{eq:discrepancy-polynomial-tails}
        \end{equation}
        Consequently, for every $q>0$,
        \begin{equation}
            \sup_{y\in V}\mathbb{E}[(1+R_{y})^{q}] + \sup_{y\in V}\mathbb{E}[(1+\Lambda
                _{y})^{q}] <\infty. \label{eq:discrepancy-all-moments}
        \end{equation}

        \medskip

        \noindent
        \emph{Step 4: Moment bounds for the observables.}\par

        Define the spatial projection of the discrepancy cluster by $\mathcal{R}_{y} :=\{x\in
        V:(x,s)\in\mathfrak{D}_{y}\text{ for some }s\ge0\}$. Every vertex whose eventual state or last spreader time
        changes under resampling belongs to $\mathcal{R}_{y}$. Hence, for every finite $A\subset V$,
        \begin{equation}
            \begin{aligned}
                |\nabla_{y}F_{\mathcal {A}}(A)| + |\nabla_{y}F_{S(t)}(A)| & \le C|\mathcal{R}_{y}| \\
                                                                          & \le C|B_{G}(y,R_{y})|  \\
                                                                          & \le C(1+R_{y})^{D}.
            \end{aligned}
            \label{eq:observable-local-bound}
        \end{equation}
        For the occupation-time observable, the spreader indicators can differ
        only at vertices of $\mathcal{R}_{y}$ and at times not exceeding
    $\Lambda_{y}$. Therefore
        \begin{equation}
            |\nabla_{y}F_{\Xi}(A)| \le |\mathcal{R}_{y}|\Lambda_{y}\le C(1+R_{y})
            ^{D}\Lambda_{y}. \label{eq:occupation-local-bound}
        \end{equation}
        By H\"older's inequality and \eqref{eq:discrepancy-all-moments}, for every
    $q>0$,
        \begin{equation}
            \sup_{y\in V}\sup_{A\in\mathfrak{A}}\mathbb{E}\left[ |\nabla_{y}F_{\mathcal {A}}
                (A)|^{q}+ |\nabla_{y}F_{\Xi}(A)|^{q}+ |\nabla_{y}F_{S(t)}(A)|^{q}\right
            ] <\infty. \label{eq:resampling-uniform-moments}
        \end{equation}

        \medskip

        \noindent
        \emph{Step 5: Stabilization.}\par

        For $x\in V$, let
        \[
            \psi_{\mathcal {A},x}(X)
            :=
            \mathbf{1}_{\{\eta_s(x)=1\text{ for some }s\ge0\}},
        \]
        \[
            \psi_{\Xi,x}(X) := \int_{0}^{\infty}\mathbf{1}_{\{\eta_s(x)=1\}}\,ds,
            \qquad \psi_{S(t),x}(X) := \mathbf{1}_{\{L_x>t\}}.
        \]
        Then, for every finite $A\subset V$ and every $\bullet\in\{\mathcal {A},\Xi ,S(t)\}$,
        \[
            \nabla_{y}F_{\bullet}(A) = \sum_{x\in A}\bigl( \psi_{\bullet,x}(X) -
            \psi_{\bullet,x}(X^{y}) \bigr).
        \]
        Every nonzero summand corresponds to a vertex of $\mathcal{R}_{y}$. Indeed, a difference in the range indicator
        must be created at an affected mark, whereas a difference in occupation time or last spreader time requires a
        disagreement in spreader status at some time.

        By \eqref{eq:discrepancy-polynomial-tails}, $R_{y}<\infty$ almost surely. Since $G$ is locally finite,
        $\mathcal{R}_{y}\subset B_{G}(y,R_{y})$ is finite almost surely.

        Let $(A_{n})_{n\ge1}\subset\mathfrak{A}$ tend locally to $\Gamma$. For fixed $y\in V$, almost surely there
        exists $n_{0}<\infty$ such that
        \[
            \mathcal{R}_{y}\subset A_{n}\qquad\text{for every }n\ge n_{0}.
        \]
        Consequently, for every $\bullet\in\{\mathcal {A},\Xi,S(t)\}$,
        \begin{equation}
            \nabla_{y}F_{\bullet}(A_{n}) \xrightarrow[n\to\infty]{\mathrm{a.s.}}\nabla
            _{y}F_{\bullet}(\infty), \label{eq:resampling-stabilization}
        \end{equation}
        where
        \[
            \nabla_{y}F_{\bullet}(\infty) := \sum_{x\in\mathcal{R}_{y}}\bigl( \psi
                _{\bullet,x}(X) - \psi_{\bullet,x}(X^{y}) \bigr).
        \]
        The uniform moment estimate \eqref{eq:resampling-uniform-moments} implies convergence in $L^{p}$ for every
        finite $p$.

        \medskip

        \noindent
        \emph{Step 6: Local conditional approximation and truncated resampling
            differences.}\par

        Use the notation $A^{+R}$ and $\mathcal{G}_{R}(A)$ from Subsection~\ref{subsec:abstract-clt}, and write
        \[
            F_{\bullet}^{(R)}(X,A) := \mathbb{E}\left[ F_{\bullet}(X,A)\mid\mathcal{G}
                _{R}(A) \right]
        \]
        for the corresponding local conditional approximation. Define $F_{\bullet}^{(R)}(X^{y},A)$ analogously using
        the local coordinates of $X^{y}$, and set
        \[
            H_{n,R}:= F_{\bullet}(X,B_{n}) - F_{\bullet}^{(R)}(X,B_{n}).
        \]

        Since \( \mathbb{E}[ H_{n,R}\mid\mathcal{G}_{R}(B_{n}) ] =0, \) we have
        \[
            \operatorname{Var}(H_{n,R}) = \mathbb{E}\left[ \operatorname{Var}\left
                ( H_{n,R}\mid\mathcal{G}_{R}(B_{n}) \right) \right].
        \]
        Conditionally on $\mathcal{G}_{R}(B_{n})$, the variable $F_{\bullet}^{(R)} (X,B_{n})$ is fixed, and only the
        exterior coordinates
        \[
            \{X_{y}:d_{G}(y,B_{n})>R\}
        \]
        remain random. Enumerate the exterior vertices as
        \[
            \{y:d_G(y,B_n)>R\}=\{y_1,y_2,\ldots\},
        \]
        and set
        \[
            \mathcal{K}_m:=\mathcal{G}_R(B_n)\vee
            \sigma(X_{y_1},\ldots,X_{y_m}).
        \]
        Conditional Efron--Stein, followed by Jensen's inequality, gives
        \[
            \mathbb{E}\!\left[
                \operatorname{Var}\!\left(
                \mathbb{E}[H_{n,R}\mid\mathcal{K}_m]
                \,\middle|\,\mathcal{G}_R(B_n)
                \right)
                \right]
            \le
            \frac{1}{2}\sum_{i=1}^{m}
            \mathbb{E}\left[
                |\nabla_{y_i}F_{\bullet}(B_n)|^{2}
                \right].
        \]
        Since
        \[
            \mathbb{E}[H_{n,R}\mid\mathcal{K}_m]
            \xrightarrow[m\to\infty]{L^2}H_{n,R},
        \]
        letting \(m\to\infty\) yields
        \begin{equation}
            \operatorname{Var}(H_{n,R})
            \le
            \frac{1}{2}\sum_{y:\,d_G(y,B_n)>R}
            \mathbb{E}\left[
                |\nabla_yF_{\bullet}(B_n)|^{2}
                \right].
            \label{eq:conditional-efron-stein}
        \end{equation}

        The summand vanishes on $\{R_{y}<d_{G}(y,B_{n})\},$ because then no vertex of $B_{n}$ belongs to
        $\mathcal{R}_{y}$. Using H\"older's inequality, \eqref{eq:resampling-uniform-moments}, and the moments of
        $R_{y}$, for every $N>0$ there exists $C_{N}<\infty$ such that
        \begin{equation}
            \mathbb{E}\left[ |\nabla_{y}F_{\bullet}(B_{n})|^{2}\right] \le C_{N}(
            1+d_{G}(y,B_{n}))^{-N}. \label{eq:far-resampling-decay}
        \end{equation}

        Since $\{y:d_{G}(y,B_{n})=k\} \subseteq B_{G}(o,n+k)\setminus B_{G}(o,n+k -1),$ the two-sided polynomial volume
        bounds give
        \[
            |B_{n}|^{-1}\#\{y:d_{G}(y,B_{n})=k\} \le C(1+k)^{D}.
        \]
        Thus, for $N>D+1$,
        \[
            \begin{aligned}
                 & |B_{n}|^{-1}\sum_{y:\,d_G(y,B_n)>R}(1+d_{G}(y,B_{n}))^{-N}   \\
                 & \qquad\le C\sum_{k>R}(1+k)^{D-N}\xrightarrow[R\to\infty]{}0.
            \end{aligned}
        \]
        Combining \eqref{eq:conditional-efron-stein} and \eqref{eq:far-resampling-decay}, we obtain
        \begin{equation}
            \lim_{R\to\infty}\limsup_{n\to\infty}|B_{n}|^{-1}\operatorname{Var}\left
            ( F_{\bullet}(X,B_{n}) - F_{\bullet}^{(R)}(X,B_{n}) \right) =0. \label{eq:exterior-approximation}
        \end{equation}

        Let $X_{y}'$ be the independent copy used to form $X^{y}$, and set
        \[
            \mathcal{H}_{R,y}(A) := \mathcal{G}_{R}(A)\vee\sigma(X_{y}').
        \]
        On the product space carrying $(X,X_{y}')$, suppose first that $y\in A^{+R}$. Then the local field used to
        compute $F_{\bullet}^{(R)}(X^{y},A)$ is obtained from the local field of $X$ by replacing $X_{y}$ with
        $X_{y}'$. Hence conditional expectation on $\mathcal{H}_{R,y}(A)$ gives
        \begin{equation}
            \nabla_{y}F_{\bullet}^{(R)}(A) = \mathbb{E}\left[ \nabla_{y}F_{\bullet}
                (A) \,\middle|\, \mathcal{H}_{R,y}(A) \right]. \label{eq:truncated-difference-identity}
        \end{equation}
        If $y\notin A^{+R}$, then
    $F_{\bullet}^{(R)}(X,A)=F_{\bullet}^{(R)}(X^{y},A)$, so the left-hand side
        is zero and the same estimates are immediate. Jensen's inequality and
        \eqref{eq:resampling-uniform-moments} give the required uniform moment bounds
        for the truncated differences.

        Let $(A_{n})_{n\ge1}\subset\mathfrak{A}$ tend locally to $\Gamma$. Then $y\in A_{n}^{+R}$ for all sufficiently
        large $n$. By \eqref{eq:truncated-difference-identity} and the $L^{2}$-contraction of conditional expectation,

        \begin{equation}
            \begin{aligned}
                 & \left\| \nabla_{y}F_{\bullet}^{(R)}(A_{n}) - \nabla_{y}F_{\bullet}(\infty) \right\|_{2}                                                                         \\
                 & \qquad\le \left\| \nabla_{y}F_{\bullet}(A_{n}) - \nabla_{y}F_{\bullet}(\infty) \right\|_{2}                                                                     \\
                 & \qquad\quad+ \left\| \mathbb{E}\left[ \nabla_{y}F_{\bullet}(\infty) \,\middle|\, \mathcal{H}_{R,y}(A_{n}) \right] - \nabla_{y}F_{\bullet}(\infty) \right\|_{2}.
            \end{aligned}
            \label{eq:truncated-difference-comparison}
        \end{equation}
        The first term tends to zero by \eqref{eq:resampling-stabilization} and the
        uniform moment bound.

        To control the second term, approximate $\nabla_{y}F_{\bullet}(\infty)$ in $L^{2}$ by a cylinder random
        variable $\chi$ depending on finitely many coordinates of $X$ and on $X_{y}'$. Since $A_{n}$ tends locally to
        $\Gamma$, the corresponding finite coordinate set is contained in $A_{n}^{+R}$ for all sufficiently large $n$.
        Thus $\chi$ is $\mathcal{H}_{R,y}(A_{n})$-measurable, and
        \[
            \begin{aligned}
                 & \left\| \mathbb{E}\left[ \nabla_{y}F_{\bullet}(\infty) \,\middle|\, \mathcal{H}_{R,y}(A_{n}) \right] - \nabla_{y}F_{\bullet}(\infty) \right\|_{2} \\
                 & \qquad\le 2 \left\| \nabla_{y}F_{\bullet}(\infty)-\chi \right\|_{2}.
            \end{aligned}
        \]
        The right-hand side can be made arbitrarily small. Hence, for every fixed $R\ge 0$,
        \begin{equation}
            \nabla_{y}F_{\bullet}^{(R)}(A_{n}) \xrightarrow[n\to\infty]{L^2}\nabla
            _{y}F_{\bullet}(\infty). \label{eq:truncated-difference-stabilization}
        \end{equation}
        Therefore the truncated resampling differences stabilize, and we may
        take
        \[
            \nabla_{y}F_{\bullet}^{(R)}(\infty) := \nabla_{y}F_{\bullet}(\infty)
        \]
        for every $R\ge0$. In particular,
    \begin{equation}
        \nabla_{y}F_{\bullet}^{(R)}(\infty) \xrightarrow[R\to\infty]{L^2}\nabla
        _{y}F_{\bullet}(\infty). \label{eq:truncated-limit-convergence}
    \end{equation}

    Equation~\eqref{eq:mt-equivariance} gives stationarity. The uniform moment condition follows from
    \eqref{eq:resampling-uniform-moments}, stabilization from \eqref{eq:resampling-stabilization}, and local
    conditional approximation from \eqref{eq:exterior-approximation}. Finally,
    \eqref{eq:truncated-difference-stabilization} and \eqref{eq:truncated-limit-convergence} verify the assumptions on
    the truncated differences. Thus all hypotheses of Theorem~\ref{thm:abstract-clt-functional} are satisfied.
\end{proof}

\begin{proof}[Proofs of Theorems~\ref{thm:clt-final-stifler-proportion-balls},
        \ref{thm:clt-spreader-occupation-balls}, and
        \ref{thm:functional-clt-empirical-survival-function}]
    The hypotheses verified in
    Proposition~\ref{prop:verification-mt-observables} are preserved under
    finite linear combinations. Hence
    Theorem~\ref{thm:abstract-clt-functional}, together with polarization,
    shows that the limit defining \(K_S(s,t)\) exists and admits the
    representation
    \begin{equation}
        K_S(s,t)
        =
        \frac{1}{|Y|}
        \sum_{y\in Y}
        \mathbb E\bigl[d_y(s)d_y(t)\bigr],
        \qquad
        d_y(t)
        :=
        \mathbb E\left[
            \nabla_yF_{S(t)}(\infty)
            \,\middle|\,
            \mathcal F_y
            \right].
        \label{eq:survival-covariance}
    \end{equation}

    By Proposition~\ref{prop:verification-mt-observables} and Theorem~\ref{thm:abstract-clt-functional}, for every
    fixed $t\ge0$ and every $\bullet\in\{\mathcal {A},\Xi,S(t)\}$, there exists $\sigma_{\bullet}^{2}\ge0$ such that
    \[
        |B_{n}|^{-1/2}\left( F_{\bullet}(X,B_{n}) - \mathbb{E}[F_{\bullet}(X,
            B_{n})] \right) \xrightarrow[n\to\infty]{d}\mathcal{N}(0,\sigma_{\bullet}
        ^{2}).
    \]
    Setting $\sigma_{\kappa}^{2}:=\sigma_{\mathcal {A}}^{2}$ and recalling that
    \[
        \mathbb{E}[\kappa_{n}]=\kappa, \qquad \mathbb{E}[\Xi_{n}]=|B_{n}|\xi,
        \qquad \mathbb{E}[S_{n}(t)]=|B_{n}|s(t),
    \]
    we obtain
    \begin{align*}
        |B_{n}|^{1/2}\bigl(\kappa_{n}-\kappa\bigr)   & \xrightarrow[n\to\infty]{d}\mathcal{N}(0,\sigma_{\kappa}^{2}), \\
        |B_{n}|^{-1/2}\bigl(\Xi_{n}-|B_{n}|\xi\bigr) & \xrightarrow[n\to\infty]{d}\mathcal{N}(0,\sigma_{\Xi}^{2}),    \\
        |B_{n}|^{1/2}\bigl(s_{n}(t)-s(t)\bigr)       & \xrightarrow[n\to\infty]{d}\mathcal{N}(0,\sigma_{S}^{2}(t)).
    \end{align*}

    \noindent
    \textit{Functional central limit theorem for the empirical survival
        function.}\par Fix $t_{1},\ldots,t_{k}\in[0,T]$ and $a_{1},\ldots,a_{k}\in\mathbb{R}$, and set
    \[
        F_{\mathbf{a},\mathbf{t}}(X,A) := \sum_{i=1}^{k}a_{i}F_{S(t_i)}(X,A).
    \]
    The hypotheses of Theorem~\ref{thm:abstract-clt-functional} remain valid under finite linear combinations.
    Moreover, $\nabla_{y}F_{\mathbf{a},\mathbf{t}} (\infty ) = \sum_{i}a_{i}\nabla_{y}F_{S(t_i)}(\infty)$. Hence
    \[
        \sum_{i=1}^{k}a_{i}\mathbb{S}_{n}(t_{i}) \xrightarrow[n\to\infty]{d}\mathcal{N}
        \left( 0, \sum_{i,j=1}^{k}a_{i}a_{j}K_{S}(t_{i},t_{j}) \right).
    \]
    The Cramér--Wold theorem gives convergence of the finite-dimensional distributions.

    For tightness, let
    \[
        N_{n}(t) := \sum_{x\in B_n}\mathbf{1}_{\{0<L_x\le t\}}= \sum_{x\in B_n}
        \mathbf{1}_{\{\eta_t(x)=2\}}.
    \]
    Since there are no initial stiflers and each vertex becomes a spreader at most once, $S_{n}(t)=S_{n}(0)-N_{n}(t)$.

    Let $(\mathcal{T}_{t})_{t\ge0}$ be the filtration generated by the initial configuration and the graphical marks up
    to time $t$. The predictable intensity of $N_{n}$ is
    \[
        \lambda_{n}(t) := \alpha \sum_{x\in B_n}\mathbf{1}_{\{\eta_{t-}(x)=1\}}
        \sum_{z\sim x}\mathbf{1}_{\{\eta_{t-}(z)\in\{1,2\}\}}.
    \]
    Thus $M_{n}(t):=N_{n}(t)-\int_{0}^{t}\lambda_{n}(u)\,du$ is a square-integrable $(\mathcal{T}_{t})$-martingale. Set
    \[
        \mathbb{M}_{n}(t):=|B_{n}|^{-1/2}M_{n}(t), \qquad \mathbb{A}_{n}(t) :
        = |B_{n}|^{-1/2}\int_{0}^{t}\bigl( \lambda_{n}(u)-\mathbb{E}[\lambda_{n}
            (u)] \bigr)\, du.
    \]
    Then
    \[
        |B_{n}|^{-1/2}\bigl( N_{n}(t)-\mathbb{E}[N_{n}(t)] \bigr) = \mathbb{M}
        _{n}(t) +\mathbb{A}_{n}(t).
    \]

    Distinct graphical marks occur at distinct times almost surely, and each stifling mark changes the state of at most
    one vertex. Hence $N_{n}$ has jumps of size one and
    \[
        \sup_{t\le T}
        \bigl|\mathbb{M}_{n}(t)-\mathbb{M}_{n}(t-)\bigr|
        \le |B_{n}|^{-1/2},
        \qquad
        \langle\mathbb{M}_{n}\rangle_{t}
        \le \alpha\Delta t.
    \]
    Moreover, for $0\le s\le t\le T$,
    \[
        0 \le \langle\mathbb{M}_{n}\rangle_{t}- \langle\mathbb{M}_{n}\rangle_{s}
        = \frac{1}{|B_{n}|}\int_{s}^{t}\lambda_{n}(u)\,du \le \alpha\Delta(t-
        s).
    \]
    Hence $(\langle\mathbb{M}_{n}\rangle)_{n\ge1}$ is uniformly Lipschitz and therefore $C$-tight. The martingale
    tightness criterion \cite[VI.4.13]{JacodShiryaev} gives tightness of $(\mathbb{M}_{n})$ in $D([0,T])$. Since the
    maximal jump size converges to zero, the sequence is $C$-tight by \cite[VI.3.26]{JacodShiryaev}.

    Let $\lambda_{n}^{y}(u)$ denote the corresponding intensity for the resampled process $\eta^{y}$. Any state
    disagreement away from $y$, including one between states $0$ and $2$, must be created at an affected mark and
    therefore lies in $\mathcal{R}_{y}$. Hence $\lambda_{n}(u)=\lambda_{n}^{y}(u)$ whenever $d_{G}(y,B_{n})>R_{y}+1$.
    Otherwise, since $G$ has polynomial volume growth of degree $D$, the number of vertices that can contribute to the
    intensity difference is bounded by $C(1+R_{y})^{D}$. Thus
    \[
        |\lambda_{n}(u)-\lambda_{n}^{y}(u)| \le C(1+R_{y})^{D}\mathbf{1}_{\{d_G(y,B_n)\le
            R_y+1\}},
    \]
    where $C<\infty$ is independent of $n$, $u$, and $y$. The Efron--Stein argument used in
    \eqref{eq:conditional-efron-stein}--\eqref{eq:far-resampling-decay}, together with
    \eqref{eq:discrepancy-all-moments} and polynomial volume growth, gives
    \[
        \sup_{n\ge1}\sup_{0\le u\le T}\frac{\operatorname{Var}(\lambda_{n}(u))}{|B_{n}|}
        <\infty.
    \]
    Therefore, for $0\le s<t\le T$,
    \[
        \mathbb{E}\left[ |\mathbb{A}_{n}(t)-\mathbb{A}_{n}(s)|^{2}\right] \le
        C(t-s)^{2}.
    \]
    Since $\mathbb{A}_{n}(0)=0$ and $\mathbb{A}_{n}$ has continuous paths, the Kolmogorov tightness criterion shows
    that $(\mathbb{A}_{n})$ is tight in $C([0,T])$.

    It follows that
    \[
        \left( |B_{n}|^{-1/2}\bigl( N_{n}-\mathbb{E}[N_{n}] \bigr) \right)_{n\ge1}
    \]
    is $C$-tight. Since $\mathbb{S}_{n}(0)$ is tight by the one-dimensional convergence already established,
    \[
        \mathbb{S}_{n}(t) = \mathbb{S}_{n}(0) - |B_{n}|^{-1/2}\bigl( N_{n}(t)
        -\mathbb{E}[N_{n}(t)] \bigr)
    \]
    is $C$-tight in $D([0,T])$. Every subsequential limit is supported on $C( [0,T])$ and has the Gaussian
    finite-dimensional distributions identified above. These distributions determine a unique limit, so the whole
    sequence $(\mathbb{S}_{n})_{n\ge1}$ converges in $D([0,T])$.
\end{proof}

\begin{proof}[Proof of the nondegeneracy assertions]

    All the preceding verifications are uniform in \(y\in V\) and \(A\in\mathfrak A\), and therefore remain valid for
    every admissible choice of \(H\) and \(Y\) in Theorem~\ref{thm:abstract-clt-functional}. The proof of that theorem
    applies to any normal, finite-index, left-orderable subgroup of $\Gamma$, and the limiting variances are intrinsic
    because they are limits of the normalized variances $|B_{n} |^{-1}\operatorname{Var}(F_{\bullet}(X,B_{n}))$. Since
    $\Gamma$ is finitely generated and virtually nilpotent, it is residually finite. For each $g\in
    B_G(o,2)\setminus\{e\}$, choose a finite-index normal subgroup $N_g\lhd\Gamma$ such that $g\notin N_g$. Replacing
    $H$ by
    \[
        H\cap\bigcap_{g\in B_G(o,2)\setminus\{e\}}N_g,
    \]
    which remains normal, finite-index, and left-orderable, we may assume that \( H\cap B_G(o,2)=\{e\}. \)

    Let $y$ be the representative of the last right coset. If \(z=yg\in B_G(y,2)\cap Hy\) with \(z\ne y\), then \(g\in
    B_G(o,2)\) and \(ygy^{-1}\in H\), normality gives \(g\in H\cap B_G(o,2)=\{e\}\), a contradiction. Hence
    $B_{G}(y,2)\subset\mathfrak{F} _{y}$.

    For each $z\sim y$, choose $w_{z}\sim z$, $w_{z}\ne y$, and let $\mathcal{E}_{y}$ be the event that all $z$ and
    $w_{z}$ are initially spreaders and the first mark with source $z$ is the stifling mark from $z$ to $w_{z}$. The
    event $\mathcal{E}_{y}$ is $\mathcal{F}_{y}$-measurable and has positive probability. Set
    \[
        I_y:=\mathbf{1}_{\{\eta_0(y)=1\}}, \qquad
        T_y:=\min_{z\sim y}\inf\{s>0:N_{\mathrm{stif}}^{y,z}(s)\ge1\},
        \qquad
        \mu:=\alpha\Delta.
    \]
    Moreover, $\mathcal E_y$ is independent of $X_y$, and hence of $(I_y,T_y)$. On $\mathcal{E}_{y}$, each neighbor $z$
    of $y$ remains a spreader until its first mark with source $z$. At that mark, $z$ becomes a stifler because $w_z$
    is non-ignorant, and $z$ has produced no previous source mark. Incoming marks cannot alter $z$: infection marks are
    ineffective at a non-ignorant target, while a stifling mark may alter only its source. Thus every neighbor of \(y\)
    has the same trajectory in the two coupled processes, and no vertex other than \(y\) is affected by the resampling
    of \(X_y\).

    Then $I_{y}\sim\operatorname{Bernoulli}(\beta)$, $T_y\sim\operatorname{Exp}(\mu)$, and they are independent. Hence,
    on $\mathcal{E}_{y}$,
    \[
        d_y^{\mathcal A}=I_{y}-\beta,\qquad d_{y}^{\Xi}=I_{y}T_{y}-\frac{\beta}{\mu}
        ,\qquad d_{y}^{S(t)}= d_{y}(t) = I_{y}\mathbf{1}_{\{T_y>t\}}-\beta e^{-\mu
                t},
    \]
    where
    \[
        d_{y}^{\bullet}:= \mathbb{E}\!\left[ \nabla_{y}F_{\bullet}(\infty)\mid
            \mathcal{F}_{y}\right].
    \]
    Writing $q=|Y|$ and $p_{t}=\beta e^{-\mu t}$, the variance representation in
    Theorem~\ref{thm:abstract-clt-functional} gives
    \[
        \sigma_{\kappa}^{2}\ge \frac{\mathbb{P}(\mathcal{E}_{y})}{q}\,\beta(
        1-\beta), \qquad \sigma_{\Xi}^{2}\ge \frac{\mathbb{P}(\mathcal{E}_{y})}{q}
        \, \frac{\beta(2-\beta)}{\mu^{2}},
    \]
    and
    \[
        \sigma_{S}^{2}(t) \ge \frac{\mathbb{P}(\mathcal{E}_{y})}{q}\,p_{t}(1-
        p_{t}).
    \]
    Since $\beta\in(0,1)$, all three lower bounds are strictly positive. This proves nondegeneracy.
\end{proof}

    This study was financed, in part, by FAPESP, Brazil, under grants 2024/21482-8,
    2025/02013-0, and 2023/13453-5, the latter corresponding to the Thematic
    Project ``Modelagem de sistemas estocásticos.''


\bibliographystyle{amsplain}
\bibliography{referencias}
\end{document}